\numberwithin{equation}{section}
\def\BState{\State\hskip-\ALG@thistlm}
\newtheorem{theorem}{Theorem}[section]
\newtheorem{lemma}[theorem]{Lemma}
\newtheorem{definition}[theorem]{Definition}
\newtheorem{example}[theorem]{Example}
\newtheorem{corollary}[theorem]{Corollary}
\newtheorem{proposition}[theorem]{Proposition}
\journal{Arxiv}
\newcommand{\mc}{{\mathcal{R}}}
\newcommand{\mci}{{\mathcal{R}^{-1}}}
\newcommand{\core}{\scriptsize\mbox{\textcircled{\#}}}
\begin{document}

\begin{frontmatter}
\title{
{\bf Further results on weighted core inverse in a ring
}}

\author{ Sourav Das$^a$, Jajati Keshari Sahoo$^b$,   Ratikanta Behera$^c$}
\vspace{.3cm}

\address{

               $^a$Department of Mathematics,\\ National Institute of Technology Jamshedpur, Jharkhand-831014, India.\\
                        \textit{E-mail$^a$}: \texttt{souravdasmath@gmail.com, souravdas.math@nitjsr.ac.in}

               \vspace{.3cm}
                        $^{b}$Department of Mathematics,\\
                       Birla Institute of Technology $\&$ Science, Pilani K. K. Birla Goa Campus, India
                        \\\textit{E-mail}: \texttt{jksahoo\symbol{'100}goa.bits-pilani.ac.in}

               \vspace{.3cm}
               $^{c}$Department of Mathematics,\\
                  University of Central Florida, Orlando, USA.\\
                        \textit{E-mail}: \texttt{ratikanta.behera@ucf.edu}

                        }

\begin{abstract}
The notion of the weighted core inverse in a ring with involution was introduced, recently [Mosi\'c {\em et al.} Comm. Algebra, 2018; 46(6); 2332-2345]. In this paper, we explore new representation and characterization of the weighted core inverse of sum and difference of two weighted core invertible elements in  ring with involution under different conditions. Further, we discuss reverse order laws and mixed-type reverse order laws for the weighted core invertible elements in a ring.
\end{abstract}

\begin{keyword}
Weighted core inverse, Weighted dual core inverse, Reverse order law, Additive property.\\
{\bf Mathematics Subject Classification: 16W10; 15A09}
\end{keyword}

\end{frontmatter}
\section{Introduction}
Core inverses, introduced in \cite{baks,BakTr14}, have been frequently investigated over the past few years \cite{zhou2019core,Zhou-CA-2020}. Indeed, right weak generalized inverse proposed earlier in \cite{BenIsrael03,Cline68}, which was renamed as the core inverse. The authors of \cite{rakicAMC} have extended the concept of core inverse to the Hilbert space operators. Raki\'c {\em et al.} in \cite{Rakic2014} discussed the notion core and dual core inverse in rings with involution, then, a few characterizations of core inverses discussed in \cite{Zhu2019}. In generalizations of the core inverses and the dual core inverses; Mosi\'{c} {\em et al.} in \cite{Mosic-CA-2018} introduced the definitions of the weighted core inverse and the weighted dual core inverse in a ring with involution. More results on the weighted core inverse can be found in \cite{Li2018}. The vast literature on the core and weighted core inverses \cite{Chen2017} with its multifarious extensions in different areas of mathematics \cite{Djordjevic2005}, motivate us study more new representation and characterization of the weighted core inverse in a ring.\\
Main contributions of this manuscript are as follows.
\begin{enumerate}
\item[$\bullet$] Derived a few explicit expressions for the 
sum and difference of two weighted core (weighted dual core) invertible elements. 
\item[$\bullet$] A few necessary and sufficient  conditions for reverse order are established.
\end{enumerate}
The explicit expression of sum and difference elements of generalized inverses plays a significant role in applied and computational mathematics \cite{Gonz_lez_2004,Djordjevi__2002}. Therefore, many researchers pay attention to investigate additive properties in different fields, including operators \cite{CZW, Cvetkovi_Ili__2006}, matrices \cite{Gonz_lez_2004}, tensors \cite{SahBe20} and elements of rings with involution \cite{Zhou-CA-2020}. In the paper, we derive explicit expression for the weighted core inverse of the sum two weighted core invertible elements in a ring  under various conditions.

It is natural to ask in the theory of generalized inverse  that, when the ordinary reverse order law 
\begin{equation*}
    (AB)^{-1} =B^{-1}A^{-1} 
\end{equation*}
can be applied in the generalized inverse case. The answer to this query was first studied in \cite{Grev1966} with a few necessary and sufficient conditions on the reverse order law for the Moore-Penrose inverse in from of rectangular matrices \cite{SunWei02, YiminWei98}. From that time, reverse order law for generalized inverses have significantly impacted many areas of science and engineering in the context of matrices \cite{Cao2004}, operators \cite{Deng11}, tensors \cite{PaniBeheMi20,JR_Rev20} and elements of rings with involution \cite{Zou-MedJM-2018,MosiDij12}. In particular, Koliha et al. \cite{Koliha2007} discussed the reverse order law for the Moore–Penrose invertible elements and Liu et al. \cite{Xia2014} derived some equivalences of the reverse order law for the group invertible elements in a ring. Further, the authors of \cite{Zou-MedJM-2018} have discussed a few one-sided reverse order law and two-sided reverse order law for the core inverse in rings. The vast work on the reverse order law \cite{MR2831656,MR2805535,SunWei02} focuses our attention to discuss reverse order laws for weighted core and dual inverse in a ring.

This paper is organized as follows. Necessary definitions and preliminaries results along with some results for weighted core inverse in a ring are discussed in Section 2. A few additive properties of the weighted core and dual core inverses over a ring have been presented in Section 3. In Section 4, we discuss the reverse order law for weighted core and dual core inverses.

\section{Preliminaries}
Let $\mathcal{R}$ be an unital ring with involution, $x\rightarrow x^\ast$ satisfying the following
\begin{equation*}
(xy)^{\ast}=y^{\ast}x^{\ast},~~(x+y)^{\ast}=x^{\ast}+y^{\ast}\mbox{~~ and~~}(x^{\ast})^{\ast}=x\mbox{~~ for all~~ }x, y\in \mathcal{R}.
\end{equation*}
Let us recall the definition of the Dedekind-finite ring, i.e., an unital ring $\mathcal{R}$ is said to be Dedekind-finite ring if $xy=1$ implies $xy=1$ for all $x,y\in\mathcal{R}.$
Now we recall the core inverse which was introduced in \cite{Rakic2014}.  Let $a\in\mathcal{R}$. If  an element $z\in\mc$ satisfies
\begin{equation*}
(1)~~aza=a,~~(2)~~zaz=z,~~(3)~~ (az)^*=az,~~(6)~~ za^2=a,~~(7)~~az^2=z,
\end{equation*}
then $z$ is called the core inverse of $a$. 
However, Xu {\em et al.} in \cite{Rakic2014} discussed the efficient representation of core inverses, which reduces to $$(3)~~ (az)^*=az,~~(6)~~ za^2=a,~~(7)~~az^2=z.$$
The Drazin and weighted core inverse of an element $a \in \mathcal{R}$ defined \cite{Drazin58} as follows.
\begin{definition}\cite{Drazin58}
Let $a \in \mathcal{R}$. An element  $z \in \mathcal{R}$ satisfying
\begin{equation*}
\left(6^k\right)~za^{k+1}=a^k,~~ (2)~zaz =z,~~(5)~az= za, \mbox{ for some } k\geq1,
\end{equation*}
is known as the Drazin inverse of $a$ and is denoted by $a^D$. 
\end{definition}
The group inverse $a^{\#}$ of $a$ is a special case of $a^D$ when $k=1$. 
The $m$-weighted core inverse and $n$-weighted dual core inverse are defined in \cite{Mosic-CA-2018}, as follows. 
\begin{definition}\label{mcore}\cite{Mosic-CA-2018}
Let $a\in\mathcal{R}$ and $m\in \mathcal{R}$ be an invertible element with $m^*=m$. Then an element $z\in \mathcal{R}$
is said to be $m$-weighted core inverse if
$$
aza = a,~~z\mathcal{R} = a\mathcal{R},~~\mbox{ and }~~\mathcal{R}z = \mathcal{R}a^*m. 
$$
The $m$-weighted core inverse of an element $a\in \mathcal{R}$ is denoted by $a^{\core,m}$ and unique (if exists).
\end{definition}

\begin{definition}\label{ncore}\cite{Mosic-CA-2018}
Let $a\in\mc$ and $n\in \mathcal{R}$ be an invertible element with $n^*=n$. If an element $y\in \mathcal{R}$ satisfies 
$$
aya = a,~~ny\mathcal{R} = a^*\mathcal{R},~~\mbox{ and }~~\mathcal{R}y = \mathcal{R}a, 
$$
then $y$ is called $n$-weighted dual core inverse of $a$.

\end{definition}
The $n$-weighted dual core inverse is unique (if exists) and it is denoted by $a_{n,\core}$. In particular, when $m=1$, the $a^{\core,m}$ reduces to the core inverse, i.e.,  $a^{\core}$. Similarly, for $n=1$, $a_{n,\core}$ is known as the dual core inverse of $a$ and is denoted by $a_{\core}$.


\subsection{Representation of weighted core inverse}
For convenience, we use $\mathcal{R}^{-1}$, $\mathcal{R}^{\#},\mathcal{R}^{\core}$, $\mathcal{R}^{\core, m}$ and $\mathcal{R}_{n,\core}$ denote the set of all invertible, group, core, $m$-weighted core and $n$-weighted dual core invertible elements of $\mathcal{R}$, respectively.  In addition to these, we use the notation $a^{(\lambda)}$ for an element of $\{\lambda \}$-inverses of $a$ and $a\{\lambda\}$ for the class of
$\{\lambda \}$-inverses of $a$, where $\lambda\in \{1, 2, 3, \cdots \}$. For instance, an element $x \in R$ is called a {\it $1$-inverse} of $a \in R$ if $x$ satisfies (1), i.e., $axa=a$ and we denote $x$ by $a^{(1)}$.
The following results will be helpful to prove some of our the main results.
\begin{lemma}\cite{Hartwig1977}\label{lm:group-inv}
Let $a\in\mathcal{R}.$ Then $a$ is group invertible {\em iff}
\begin{equation*}
a=a^2s=ta^2 \textnormal{~~for some~~} s,t\in \mathcal{R}.    
\end{equation*}
Moreover, $a^{\#}=yas=t^2a=as^2.$
\end{lemma}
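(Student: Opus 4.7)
The plan is to prove both directions of the equivalence, with the converse being the substantive part.

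For the forward direction, assuming $a$ has group inverse $a^\#$, I would simply use the defining relations $a a^\# a = a$ and $a a^\# = a^\# a$ to obtain $a = (a a^\#) a = (a^\# a) a = a^\# a^2$ and dually $a = a (a^\# a) = a (a a^\#) = a^2 a^\#$, so the element $a^\#$ itself serves as both $s$ and $t$.

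For the converse, assume $a = a^2 s = t a^2$. The heart of the argument is the identity $as = ta$, which I would obtain by computing the product $(ta)(as)$ two ways. On one hand, $(ta)(as) = t(a \cdot as) = t(a^2 s) = ta$, using $a^2 s = a$. On the other hand, $(ta)(as) = (ta \cdot a)s = (ta^2)s = as$, using $ta^2 = a$. Hence $ta = as$. This immediately gives the equality of the three candidate formulas, since $tas = t(ta) = t^2 a$ and $tas = (as)s = as^2$. I will therefore set $z := tas = t^2 a = as^2$.

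It then remains to verify the three axioms for the group inverse. For $az = za$: using $z = as^2$, I compute $az = a \cdot as^2 = (a^2 s)s = as$; using $z = tas$, I compute $za = t(as)a = t(ta)a = t^2 a^2 = t(ta^2) = ta$. Since $as = ta$ by the key identity, $az = za$. For $aza = a$: $aza = a(za) = a \cdot ta = a(as) = a^2 s = a$. For $zaz = z$: $zaz = (za)z = ta \cdot as^2 = t a^2 s^2 = (ta^2)s^2 = a s^2 = z$. All three defining relations then show that $z$ is the group inverse of $a$, completing the proof and producing the stated formula.

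The only non-routine step is spotting the identity $as = ta$; everything else is straightforward symbolic manipulation using the two defining equations, and the uniqueness of the group inverse guarantees that the three expressions $tas$, $t^2 a$, $as^2$ all coincide with $a^\#$.
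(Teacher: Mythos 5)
Your proof is correct. The paper states this lemma as a citation to Hartwig (1977) and gives no proof of its own, so there is nothing to compare against; your argument is the standard self-contained verification. Both directions check out: the forward direction correctly takes $s=t=a^{\#}$, and in the converse the key identity $as=ta$ (obtained by evaluating $(ta)(as)$ two ways) does all the work, after which the three group-inverse axioms for $z=tas=t^2a=as^2$ follow by routine substitution. Note also that you have implicitly (and correctly) read the paper's formula $a^{\#}=yas$ as the typo it is, namely $a^{\#}=tas$.
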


\begin{theorem}\cite{Mosic-CA-2018}\label{lm:a-n-core}
Let $m\in \mathcal{R}^{-1}$ be a hermitian element. If $a\in\mathcal{R}^{\#}\cap\mathcal{R}^{(1,3^m)}=\phi$, then
$$
(a^p)^{\core,m}=(a^{\core,m})^{p} \mbox{ for any }p\in\mathbb{N}.
$$
\end{theorem}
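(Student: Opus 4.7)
The proof plan is a direct verification that $b^{p}:=(a^{\core,m})^{p}$ fulfils the defining equations of the $m$-weighted core inverse of $a^{p}$. Writing $b:=a^{\core,m}$, the five equations to be checked are
\[
a^{p}b^{p}a^{p}=a^{p},\quad b^{p}a^{p}b^{p}=b^{p},\quad (ma^{p}b^{p})^{\ast}=ma^{p}b^{p},\quad b^{p}(a^{p})^{2}=a^{p},\quad a^{p}(b^{p})^{2}=b^{p},
\]
and $b$ itself satisfies the analogous system with $p=1$, namely $aba=a$, $bab=b$, $(mab)^{\ast}=mab$, $ba^{2}=a$, $ab^{2}=b$.

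The engine of the argument is the pair of one-sided absorptions $ba^{2}=a$ and $ab^{2}=b$. A routine induction on $k$ upgrades these to $ba^{k+1}=a^{k}$ and $ab^{k+1}=b^{k}$ for every $k\ge 1$, and a second induction yields $b^{j}a^{k+j}=a^{k}$ and $a^{j}b^{k+j}=b^{k}$ for all $j,k\ge 1$. Specialising $j=k=p$ handles the two mixed equations at once: $b^{p}(a^{p})^{2}=b^{p}a^{2p}=a^{p}$ and $a^{p}(b^{p})^{2}=a^{p}b^{2p}=b^{p}$. Collapsing all-but-one factor in the same way gives the two central simplifications $a^{p}b^{p}=ab$ and $b^{p}a^{p}=ba$, valid for every $p\ge 1$.

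With these in hand the remaining equations fall in one line each: the Hermitian condition reduces to $(ma^{p}b^{p})^{\ast}=(mab)^{\ast}=mab=ma^{p}b^{p}$; the outer equation becomes $a^{p}b^{p}a^{p}=(ab)a^{p}=(aba)a^{p-1}=a^{p}$ via $aba=a$; and symmetrically $b^{p}a^{p}b^{p}=(ba)b^{p}=(bab)b^{p-1}=b^{p}$ via $bab=b$. Uniqueness of the $m$-weighted core inverse then yields $(a^{p})^{\core,m}=b^{p}=(a^{\core,m})^{p}$, and the existence of $(a^{p})^{\core,m}$ is certified by the very element $b^{p}$ we constructed. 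The only real obstacle is bookkeeping: because $a$ and $b$ are not assumed to commute, the absorptions $ba^{k+1}=a^{k}$ and $ab^{k+1}=b^{k}$ must be applied from the correct side when reducing $a^{p}b^{p}$ and $b^{p}a^{p}$, after which no further nontrivial step is required.
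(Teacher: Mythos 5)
Your verification is correct. The paper states this result as a citation from Mosi\'c \emph{et al.} and gives no proof of its own, so there is nothing to compare against; your argument is the natural self-contained one. Starting from the characterizing system $aba=a$, $bab=b$, $(mab)^{\ast}=mab$, $ba^{2}=a$, $ab^{2}=b$ for $b=a^{\core,m}$, the absorption identities $b^{j}a^{k+j}=a^{k}$ and $a^{j}b^{k+j}=b^{k}$ do follow by peeling one factor at a time, the collapses $a^{p}b^{p}=ab$ and $b^{p}a^{p}=ba$ are immediate consequences, and all the defining equations for $(a^{p})^{\core,m}$ check out; uniqueness of the weighted core inverse then closes the argument. (One cosmetic remark: by Theorem~\ref{core-equivalent}(iii) it would have sufficed to verify only the three equations $(ma^{p}b^{p})^{\ast}=ma^{p}b^{p}$, $b^{p}(a^{p})^{2}=a^{p}$, $a^{p}(b^{p})^{2}=b^{p}$, and the hypothesis ``$a\in\mathcal{R}^{\#}\cap\mathcal{R}^{(1,3^m)}=\phi$'' in the statement is a typo for $a\in\mathcal{R}^{\#}\cap\mathcal{R}^{(1,3^m)}$, i.e.\ $a\in\mathcal{R}^{\core,m}$, which is exactly the reading you adopted.)
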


\begin{theorem}\cite{Mosic-CA-2018}\label{core-equivalent}
Let $a\in \mathcal{R}$ and $m\in \mathcal{R}^{-1}$ with $m^*=m$. Then the following  hold.
\begin{enumerate}[(i)]
\item $a\in \mathcal{R}^{\core,m}$  and $x=a^{\core,m}.$
\item $ xax=x,~~~ax^2 = x,~~~(max)^* = max,~~~xa^2 = a~~axa=a.$
\item 
$
(max)^* = max,~~~xa^2 = a,~~~ax^2 = x.
$
\item $a^{\core,m}=a^{\#}aa^{(1,3^m)}$, for any $a^{(1,3^m)}\in a\{1,3^m\}.$
\item $a^{\#}=\left(a^{\core,m}\right)^2a.$
\item $a\in \mathcal{R}^{\#}\cap \mathcal{R}^{(1,3^m)}$.
\end{enumerate}
\end{theorem}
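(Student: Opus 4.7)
The six items are mutually equivalent characterizations of $a^{\core,m}$, so my plan is to close the cycle
\[
\tl{(i)}\Rightarrow\tl{(vi)}\Rightarrow\tl{(iv)}\Rightarrow\tl{(ii)}\Rightarrow\tl{(iii)}\Rightarrow\tl{(vi)},
\]
with $\tl{(v)}$ derived as a direct corollary of $\tl{(iv)}$, and the implication $\tl{(iv)}\Rightarrow\tl{(i)}$ immediate from the explicit formula.

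The central algebraic work is in $\tl{(iii)}\Rightarrow\tl{(vi)}$ and $\tl{(vi)}\Rightarrow\tl{(iv)}$. For $\tl{(iii)}\Rightarrow\tl{(vi)}$, the crucial observation is that $axa = a$ can be recovered via the chain $xa^2 = (ax^2)a^2 = ax(xa^2) = ax\cdot a = axa$ together with the hypothesis $xa^2 = a$; combined with $(max)^* = max$, this shows $x \in a\{1,3^m\}$, so $a\in\mc^{(1,3^m)}$. For group invertibility, substituting $x = ax^2$ into $axa = a$ yields $a = a^2 x^2 a$, so $a\in a^2\mc$, while $a = xa^2 \in\mc a^2$ from the hypothesis, and Lemma~\ref{lm:group-inv} then delivers $a^\#$. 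For $\tl{(vi)}\Rightarrow\tl{(iv)}$, I would set $y := a^\# a a^{(1,3^m)}$ and verify Definition~\ref{mcore} condition-by-condition: $aya = a$ and $ya^2 = a^\# a^2 = a$ follow routinely from $aa^{(1,3^m)}a = a$ and $aa^\# a = a$; $y = a\cdot(a^\# a^{(1,3^m)})\in a\mc$ is immediate; the essential step $y\in\mc a^*m$ uses the hermiticity $(maa^{(1,3^m)})^* = maa^{(1,3^m)}$ to rewrite $ay = aa^{(1,3^m)}$ as $m^{-1}(a^{(1,3^m)})^*a^*m$, so that $y = a^\#\cdot ay \in \mc a^*m$; and $a^*m = a^*\cdot may = (a^*ma)y\in\mc y$ closes $\mc y = \mc a^*m$.

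The remaining implications are direct computations. $\tl{(iv)}\Rightarrow\tl{(ii)}$ expands $y = a^\# a a^{(1,3^m)}$ in each of the five equations (for instance, $ay^2$ telescopes to $y$ via $a^\#a = aa^\#$ and $aa^{(1,3^m)}a = a$); $\tl{(ii)}\Rightarrow\tl{(iii)}$ is a restriction; and $\tl{(v)}$ follows from $\tl{(iv)}$ by the collapse $(a^\# a a^{(1,3^m)})^2 a = a^\#\cdot a\cdot a^\# = a^\#$ using the same identities. The main obstacle is the step $\tl{(i)}\Rightarrow\tl{(vi)}$: the identity $(max)^* = max$ is not formally forced by the ideal conditions $x\mc=a\mc$ and $\mc x=\mc a^*m$ alone. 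I would recover it by noting that $ax$ is idempotent ($(ax)^2 = (axa)x = ax$) and then exploiting both $x = ua^*m$ and $a^*m = vx$ from $\mc x = \mc a^*m$, together with uniqueness of $a^{\core,m}$; this is the step where I would spend the most care.
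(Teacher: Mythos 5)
The paper does not prove this theorem: it is quoted verbatim from Mosi\'c \emph{et al.} \cite{Mosic-CA-2018} (and, as the wording of the companion Theorem~\ref{n-core-equivalent} makes clear, ``the following hold'' should read ``the following are equivalent''), so there is no in-paper argument to compare against. Judged on its own terms, your cycle is sensibly organized and most of the individual implications check out: the derivation of $axa=a$ from $(3^m),(6),(7)$ via $xa^2=(ax^2)a^2=ax(xa^2)=axa$ is correct, Lemma~\ref{lm:group-inv} does deliver $a^\#$ from $a=a^2(x^2a)=xa\cdot a$, and your verification that $y=a^\#aa^{(1,3^m)}$ satisfies all three conditions of Definition~\ref{mcore} (including $\mathcal{R}y=\mathcal{R}a^*m$ via the hermiticity of $maa^{(1,3^m)}$) is complete and simultaneously yields (i), (iv), and, by the telescoping $aa^{(1,3^m)}a^\#=a^\#$, item (v).

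The genuine gap is exactly where you flagged it, and it is not a small one: the implication (i)$\Rightarrow$(vi) (equivalently, extracting $(max)^*=max$ and $xa^2=a$ from the purely ideal-theoretic Definition~\ref{mcore}) is the hard half of the Raki\'c--Mosi\'c equivalence between the ``ideal'' and ``equational'' descriptions of the core inverse, and your sketch does not close it. Writing $x=ua^*m$ and $a^*m=vx$ and noting that $ax$ is idempotent only gets you to circular identities such as $xax=u(a^*max)$ and $a^*max=v(xax)$; each of the two facts you need presupposes the other. Appealing to ``uniqueness of $a^{\core,m}$'' cannot break the circle either: uniqueness tells you there is at most one element satisfying the ideal conditions, but says nothing about which additional equations that element satisfies --- to use uniqueness you would first have to exhibit \emph{some} element satisfying Definition~\ref{mcore} that you independently know lies in $a\{1,2,3^m,6,7\}$, which is precisely the existence statement (vi) you are trying to reach. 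A correct argument (as in \cite{Mosic-CA-2018} and, for $m=1$, \cite{Rakic2014}) must combine all three hypotheses at once; for instance, $x\mathcal{R}=a\mathcal{R}$ gives $ax^2=x$ and $a\in a^2\mathcal{R}$, while the membership $a\in x\mathcal{R}\subseteq\mathcal{R}a^*m\cdot\mathcal{R}$ together with $axa=a$ is what ultimately forces $a\in\mathcal{R}a^2$ and the symmetry of $max$. As it stands, your proposal proves the equivalence of (ii)--(vi) and shows each of them implies (i), but does not prove that (i) implies any of the others.
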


\begin{theorem}\cite{Mosic-CA-2018}\label{n-core-equivalent}
Let  $a\in \mathcal{R}$ and $n\in \mathcal{R}^{-1}$ with $n^*=n$. Then the following are equivalent.
\begin{enumerate}[(i)]
\item $a\in \mathcal{R}_{n,\core}$ and $y=a_{n,\core}$.
\item 
$
aya=a,~~~ yay=y,~~~(nya)^* = nya,~~~a^2y = a,~~~y^2a = y.
$
\item 
$
(nya)^* = nya,~~~a^2y = a,~~~y^2a = y.
$
\item $a\in \mathcal{R}^{\#}\cap \mathcal{R}^{(1,4^n)}$.
\item $a_{n,\core}=a^{(1,4^n)}aa^{\#}$, for any $a^{(1,4^n)}\in a\{1,4^n\}$.
\item $a^{\#}=a\left(a_{n,\core}\right)^2$.
\end{enumerate}
\end{theorem}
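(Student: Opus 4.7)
The plan is to establish the cyclic chain (i) $\Rightarrow$ (ii) $\Rightarrow$ (iii) $\Rightarrow$ (iv) $\Rightarrow$ (v) $\Rightarrow$ (i) and then peel off (vi) as a separate short computation. The argument mirrors that of Theorem \ref{core-equivalent}, with the $\{1,3^m\}$-role replaced by $\{1,4^n\}$ and the one-sided ideal conditions dualized via the involution; I will therefore emphasize only the steps where the dual-core setting requires genuinely different bookkeeping.

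For (i) $\Rightarrow$ (ii), I would use $\mathcal{R}y=\mathcal{R}a$ to write $y=sa$ and $a=ty$ for some $s,t\in\mathcal{R}$. Substitution into $aya=a$ then yields $yay=y$, $a^2y=a$, and $y^2a=y$ in a few lines of manipulation. The relation $ny\mathcal{R}=a^*\mathcal{R}$ produces $p\in\mathcal{R}$ with $ny=a^*p$; comparing $nya=a^*pa$ with $aya=a$ forces $(nya)^*=nya$. The step (ii) $\Rightarrow$ (iii) is trivial.

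The hardest link is (iii) $\Rightarrow$ (ii), which I expect to be the main obstacle. Starting only from $(nya)^*=nya$, $a^2y=a$, and $y^2a=y$, I need to recover $aya=a$ and $yay=y$. The key move is to rewrite the Hermitian relation as $nya=a^*y^*n$; taking involutions of $a^2y=a$ gives $y^*(a^*)^2=a^*$, and plugging back after cancelling $n$ and $n^{-1}$ yields $aya=a$. For $yay=y$, one multiplies $y^2a=y$ on the right by $a$, applies $a^2y=a$, and rearranges using the Hermitian identity. The bookkeeping is delicate because $n$ is neither central nor removable.

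For (iii) $\Rightarrow$ (iv), the pair $a=a^2y$ and $y=y^2a$ puts $a$ in the form required by Lemma \ref{lm:group-inv}, so $a\in\mathcal{R}^{\#}$ with an explicit formula for $a^{\#}$ in terms of $y$; that $y\in a\{1,4^n\}$ is then immediate from $aya=a$ (already obtained) together with $(nya)^*=nya$. For (iv) $\Rightarrow$ (v), given any $a^{(1,4^n)}\in a\{1,4^n\}$ I would verify directly that $z:=a^{(1,4^n)}aa^{\#}$ meets Definition \ref{ncore}, using that $aa^{\#}=a^{\#}a$ is an idempotent commuting with $a$ and that $(na^{(1,4^n)}a)^*=na^{(1,4^n)}a$; the link (v) $\Rightarrow$ (i) is then just uniqueness of the $n$-weighted dual core inverse. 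Finally, (vi) follows from (ii) by the short computation $a(a_{n,\core})^2=(ay)y=\cdots=a^{\#}$, absorbing an $a$ via $aya=a$ and matching Lemma \ref{lm:group-inv} via $y^2a=y$; the converse direction uses $ay^2a=a^{\#}a$ to verify the defining relations of Definition \ref{ncore} directly.
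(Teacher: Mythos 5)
First, a point of comparison: the paper itself gives \emph{no} proof of this theorem --- it is imported verbatim from Mosi\'c et al.\ \cite{Mosic-CA-2018} as a preliminary, and the paper only works out the $m$-weighted-core analogues (Lemma \ref{lm:m-weighted-core-alt}, Proposition \ref{cor2.10}, Lemma \ref{prop11}), remarking that the dual statements follow by symmetry. So there is no in-paper argument to match; your cyclic scheme (i)$\Rightarrow$(ii)$\Rightarrow$(iii)$\Rightarrow$(iv)$\Rightarrow$(v)$\Rightarrow$(i) with (vi) peeled off separately is a legitimate organization, and every link in it is provable.

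You have, however, misjudged where the difficulty sits. The link (iii)$\Rightarrow$(ii), which you call ``the main obstacle'' and propose to attack by taking involutions of $a^2y=a$ and ``cancelling $n$ and $n^{-1}$,'' is in fact a two-line substitution that uses neither the Hermitian condition nor $n$:
\[
aya=(a^2y)ya=a^2(y^2a)=a^2y=a,\qquad yay=(y^2a)ay=y^2(a^2y)=y^2a=y.
\]
This is precisely the paper's observation that $y\in a\{8,9\}$ implies $y\in a\{1,2\}$ (the unlabeled dual of Corollary \ref{pro2.2}); your involution-based route is unnecessary and, as sketched, does not obviously close. Conversely, the genuinely delicate step is (i)$\Rightarrow$(ii): deriving $(nya)^*=nya$ from $ny\mathcal{R}=a^*\mathcal{R}$ is not a matter of ``comparing $nya=a^*pa$ with $aya=a$.'' The standard argument is: $aya=a$ gives $a^*y^*a^*=a^*$, i.e.\ $(nya)^*n^{-1}a^*=a^*$; right-multiplying by $p$, where $ny=a^*p$, yields $(nya)^*y=ny$, hence $nya=(nya)^*ya=(nya)^*n^{-1}(nya)$, which is visibly Hermitian since $n^{-1}$ is. Moreover $yay=y$ and $a^2y=a$ in that direction do \emph{not} drop out of $y=sa$, $a=ty$ and $aya=a$ alone (only $y^2a=y$ does); they need the Hermitian identity just obtained together with $a\in\mathcal{R}y$. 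With those two repairs the remaining links go through as you describe: (iii)$\Rightarrow$(iv) via Lemma \ref{lm:group-inv} using $a=(ay^2)a^2=a^2y$; (iv)$\Rightarrow$(v) by directly checking the equations of (iii) for $z=a^{(1,4^n)}aa^{\#}$, using $a^{\#}=a^{\#}a^{(1,4^n)}a$; and (vi) from the computation $a\left(a_{n,\core}\right)^2=a^{\#}$ together with its converse.
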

In view of Theorem \ref{core-equivalent}, we obtain the following equivalent representations for $m$-weighted core inverse.
\begin{lemma}\label{lm:m-weighted-core-alt}
Let $m\in \mathcal{R}^{-1}$ be a hermitian element and  $a\in\mathcal{R}$. If an element $x\in\mc$ satisfies 
$axa=a,~(max)^*=max,$ and $ax^2=x$, then $x=a^{\core,m}$.
\end{lemma}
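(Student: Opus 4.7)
The plan is to invoke Theorem~\ref{core-equivalent}(iii), which gives $x=a^{\core,m}$ as soon as all three of $(max)^*=max$, $xa^2=a$, and $ax^2=x$ hold. Two of these are already in the hypotheses, so the task reduces to deducing $xa^2=a$ from $axa=a$, $(max)^*=max$, and $ax^2=x$. Along the way I would first record the free idempotence by-products $(ax)^2=(axa)x=ax$, $(xa)^2=x(axa)=xa$, and $(ax)(xa)=(ax^2)a=xa$, each a one-line consequence of the hypotheses.

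Next I would extract adjoint consequences. Applying the involution to the three hypotheses gives $a^*x^*a^*=a^*$, the relation $x^*a^*m=max$ (hence $x^*a^*=maxm^{-1}$ and $(ax)^*=m(ax)m^{-1}$), and $x^{*2}a^*=x^*$. Substituting $x^*a^*=maxm^{-1}$ into $a^*=a^*x^*a^*$ yields the pivotal identity $a^*m\cdot ax=a^*m$, i.e.\ $a^*m(1-ax)=0$. Substituting $x^*=x^{*2}a^*$ into $a^*=a^*x^*a^*$ gives $a^*=a^*x^{*2}a^{*2}$; taking adjoints shows $a=a^2x^2a$, so $a\in a^2\mathcal{R}$ with witness $s=x^2a$.

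The main obstacle is the symmetric one-sided condition $a\in\mathcal{R}a^2$, which combined with the preceding would yield $a\in\mathcal{R}^{\#}$ via Lemma~\ref{lm:group-inv}. My plan here is to combine the pivot $a^*m(1-ax)=0$ with the conjugation identity $(ax)^*=m(ax)m^{-1}$ and with the idempotence of $xa$, manipulating $a^*$ into the form $a^{*2}u$ for some $u\in\mathcal{R}$; taking adjoints then produces a $t$ with $a=ta^2$. Once group invertibility is in hand, since $x$ is visibly a $\{1,3^m\}$-inverse of $a$, Theorem~\ref{core-equivalent}(iv) gives $a^{\core,m}=a^{\#}ax$. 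Finally, $x=ax^2\in a\mathcal{R}$ and $a^{\#}a$ restricts to the identity on $a\mathcal{R}$ (because $aa^{\#}a=a$), so $a^{\#}ax=x$, completing the identification $x=a^{\core,m}$.
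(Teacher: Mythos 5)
Everything you actually derive is correct: the idempotent identities, the pivot $a^*m=a^*max$, and $a=a^2x^2a$ (so $a\in a^2\mathcal{R}$) all follow from the hypotheses, and you have correctly isolated the real difficulty, namely producing $a\in\mathcal{R}a^2$ so that Lemma~\ref{lm:group-inv} gives $a\in\mathcal{R}^{\#}$. But that is precisely the step you leave as a ``plan,'' and it cannot be carried out: the three hypotheses do not imply $a\in\mathcal{R}a^2$, and the lemma as stated is false without the extra assumption $a\in\mathcal{R}^{\#}$. Take $\mathcal{R}=B(\ell^2)$ with the Hilbert-adjoint involution, $m=1$, $a=S^*$ and $x=S$, where $S$ is the unilateral shift, so $S^*S=1$. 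Then $axa=S^*SS^*=S^*=a$, $max=S^*S=1$ is hermitian, and $ax^2=S^*S^2=S=x$, so all three hypotheses hold; yet $\ker S^*\subsetneq\ker (S^*)^2$, so $S^*$ is not group invertible and hence, by Theorem~\ref{core-equivalent}, not $m$-weighted core invertible. This example realizes exactly the asymmetry you ran into: $S^*=(S^*)^2S\in a^2\mathcal{R}$, but $S^*\notin\mathcal{R}(S^*)^2$. So no manipulation of the pivot identity can close the gap.

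You should also know that the paper's own proof has the same hole: its second line writes $a^{\#}a=a^{\#}a^2x^2a$, silently assuming $a\in\mathcal{R}^{\#}$ even though the statement only assumes $a\in\mathcal{R}$. Once $a\in\mathcal{R}^{\#}$ is added as a hypothesis (or $axa=a$ is replaced by $xa^2=a$, as in Theorem~\ref{core-equivalent}(iii)), your finishing argument is complete and in fact slightly cleaner than the paper's: $x\in a\{1,3^m\}$ gives $a^{\core,m}=a^{\#}ax$ by Theorem~\ref{core-equivalent}(iv), and $a^{\#}ax=a^{\#}a(ax^2)=ax^2=x$, whereas the paper derives $xa=a^{\#}a$, hence $xa^2=a$, and then invokes the $(3^m),(6),(7)$ characterization. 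The obstacle you flagged is therefore not a defect of your approach but a missing hypothesis in the statement itself.
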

\begin{proof}
Let  $ axa=a$ and $ax^2= x$. Then $ a= a^2x^2a,$ which yields  $ a^{\#}  a= a^{\#}a^2x^2a= ax^2a= xa.$ Now $xa^2= a^{\#}a^2=a.$ Hence ${x}$ is the $m$-weighted core inverse of the element $ a$.
\end{proof}

\begin{proposition}\label{cor2.10}
Let $m \in \mathcal{R}^{-1}$ be a hermitian element and $a\in \mathcal{R}^{\#}$. Further, let $x\in R$ with $(max)^*=max$. If $x$ satisfies either $xa^2=a$ or $xa=a^{\#}a$, then $a^{\core,m}=a^{\#}ax$.
\end{proposition}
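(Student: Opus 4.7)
The plan is to reduce both cases to showing that $x$ is a $\{1,3^m\}$-inverse of $a$, and then invoke Theorem \ref{core-equivalent}(iv), which gives $a^{\core,m}=a^{\#}aa^{(1,3^m)}$ for any $a^{(1,3^m)}\in a\{1,3^m\}$. Since the hypothesis already provides $(max)^*=max$, the only thing left to verify in each case is the equation $axa=a$.

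First I would handle the case $xa=a^{\#}a$, which is essentially immediate: multiplying on the left by $a$ gives $axa=a\cdot a^{\#}a=a^{\#}a\cdot a=a^{\#}a^{2}=a$, using the commutation $aa^{\#}=a^{\#}a$ and $a^{\#}a^{2}=a$ from the definition of the group inverse (Lemma \ref{lm:group-inv}).

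Next I would treat the case $xa^{2}=a$. Multiplying on the left by $a$ yields $axa^{2}=a^{2}$. Now multiply both sides on the right by $a^{\#}$; using $a^{2}a^{\#}=a(aa^{\#})=a(a^{\#}a)=a^{\#}a^{2}=a$ I can rewrite the left-hand side as
\begin{equation*}
axa^{2}a^{\#}=ax(a^{2}a^{\#})=axa,
\end{equation*}
while the right-hand side becomes $a^{2}a^{\#}=a$. Hence $axa=a$, as desired. This little manoeuvre (reassociating $axa^{2}a^{\#}$ as $ax(a^{2}a^{\#})$ to collapse it back to $axa$) is the only subtle point of the proof and is the step I expect to be the main obstacle; it relies crucially on the group invertibility of $a$.

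In both cases we have shown $axa=a$ and $(max)^{*}=max$, so $x\in a\{1,3^{m}\}$. In particular $a\in\mathcal{R}^{\#}\cap\mathcal{R}^{(1,3^{m})}$, so the equivalence (vi)$\Leftrightarrow$(i) of Theorem \ref{core-equivalent} guarantees $a\in\mathcal{R}^{\core,m}$, and the representation formula in part (iv) of the same theorem yields
\begin{equation*}
a^{\core,m}=a^{\#}a\,x,
\end{equation*}
which is exactly the conclusion. As an alternative, one could also verify the three conditions of Lemma \ref{lm:m-weighted-core-alt} for $y=a^{\#}ax$ directly, but routing through Theorem \ref{core-equivalent}(iv) avoids the need to check $ay^{2}=y$ by hand.
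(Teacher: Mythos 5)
Your proof is correct and follows essentially the same route as the paper: in both cases you verify $axa=a$ (the paper's computation for the $xa^{2}=a$ case is exactly your manoeuvre, written as $axa=axaa^{\#}a=axa^{2}a^{\#}=a^{2}a^{\#}=a$), conclude $x\in a\{1,3^{m}\}$, and invoke Theorem \ref{core-equivalent}(iv). No differences worth noting.
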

\begin{proof}
Let  $xa^2=a$. Then     $axa=axaa^{\#}a=axa^2a^{\#}=a^2a^{\#}=a$.
Thus $a\in \mathcal{R}^{\#}\cap \mathcal{R}^{(1,3^m)}$. Hence by Theorem \ref{core-equivalent} (iv), $a^{\core,m}=a^{\#}ax$. Similarly, using $xa=a^{\#}a$, we can show $x\in a\{1,3^m\}$. Which again leads $a^{\core,m}=a^{\#}ax$.
\end{proof}

The above proposition is very useful to compute $m$-weighted core inverse of an element as shown in the next example.
\begin{example}\rm
Let $R=M_2(\mathbb{R})$, $a=\begin{pmatrix}
1  & 0\\
-1 & 0
\end{pmatrix}$ and $m=\begin{pmatrix}
2  & 1\\
1 & 2
\end{pmatrix}$. If we take  $x=\begin{pmatrix}
0.5 & -0.5\\
-0.5 & 0.5
\end{pmatrix}$, then we can verify that $max$ is symmetric and $xa=a^{\#}a$. Hence by Proposition \ref{cor2.10}, $x=a^{\core,m}.$ 
\end{example}

Using Proposition \ref{cor2.10}, we obtain another  representations for $m$-weighted core inverse, as follows.

\begin{lemma}\label{prop11}
Let $m\in \mathcal{R}^{-1}$ be a hermitian element and $a\in \mathcal{R}$. If an element $x\in\mc$ satisfies
$xax=x,~(max)^*=max$, and $xa^2=a$. then $x=a^{\core,m}$.
\end{lemma}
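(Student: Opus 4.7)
The approach is to reduce to Proposition \ref{cor2.10}. Supposing for the moment that $a\in\mathcal{R}^{\#}$, right-multiplying the hypothesis $xa^{2}=a$ by $a^{\#}$ gives $x(a^{2}a^{\#})=aa^{\#}$, and the group-inverse identities $a^{2}a^{\#}=a$ and $aa^{\#}=a^{\#}a$ collapse this to $xa=a^{\#}a$. Proposition \ref{cor2.10}, applied with this relation together with the hypothesis $(max)^{*}=max$, then yields $a^{\core,m}=a^{\#}ax$; the hypothesis $xax=x$ rewrites the right side as $(xa)x=xax=x$, identifying $x$ with $a^{\core,m}$. Modulo the group-invertibility of $a$ the entire argument is therefore mechanical.

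The substantive step is accordingly to prove $a\in\mathcal{R}^{\#}$. By Lemma \ref{lm:group-inv} one must produce $s,t\in\mathcal{R}$ with $a=ta^{2}=a^{2}s$; the witness $t=x$ is immediate from the hypothesis $xa^{2}=a$, so only the existence of $s$ needs argument. Useful preliminaries are the idempotencies $(xa)^{2}=xa$ and $(ax)^{2}=ax$ (multiplying $xax=x$ by $a$ on the right and left respectively) together with the mixed identity $(xa)(ax)=ax$ (multiplying $xa^{2}=a$ by $x$ on the right). The involution hypothesis must then be brought in: left-multiplying $xa^{2}=a$ by $ma$ produces $max\cdot a^{2}=ma^{2}$, whose involution together with $(max)^{*}=max$ and the rearrangement $x^{*}a^{*}m=max$ yields $(a^{*})^{2}x^{*}a^{*}=(a^{*})^{2}$, an identity whose intended role is to convert, via an adjoint and the invertibility of $m$, into a right-factorization of $a$ by $a^{2}$.

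The principal obstacle I anticipate is precisely this extraction of $s$. After taking the adjoint, the identity just derived reads $(axa)a=a^{2}$, which is already implicit in $xa^{2}=a$, so the real work lies in combining this with the idempotency structure and the identity $x^{*}a^{*}m=max$ in a subtle way in order to pin down a right-decomposition. Once $a\in\mathcal{R}^{\#}$ is secured, the closing chain $a^{\core,m}=a^{\#}ax=(xa)x=xax=x$ of the first paragraph finishes the proof without further effort.
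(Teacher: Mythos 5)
Your first paragraph is precisely the paper's proof: from $xa^{2}=a$ one gets $xa=aa^{\#}=a^{\#}a$, hence $axa=a^{2}a^{\#}=a$, so $x\in a\{1,3^{m}\}$, and Proposition \ref{cor2.10} gives $a^{\core,m}=a^{\#}ax=(xa)x=xax=x$. That part is correct, conditional on $a\in\mathcal{R}^{\#}$.

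The gap is exactly where you located it, and it cannot be closed: the three hypotheses do not imply $a\in\mathcal{R}^{\#}$. In the ring of bounded operators on $\ell^{2}$ with the usual adjoint, take $m=1$, $a=S$ the unilateral right shift and $x=S^{*}$. Then $xax=S^{*}SS^{*}=S^{*}=x$, the element $max=SS^{*}$ is a self-adjoint projection, and $xa^{2}=S^{*}S^{2}=S=a$; yet $S\neq S^{2}t$ for any $t$ (the range of $S^{2}$ does not contain $Se_{1}=e_{2}$), so $a$ is not group invertible, hence not $m$-weighted core invertible, and the conclusion fails. Your observation that the involution hypothesis only returns $axa^{2}=a^{2}$, which is already implicit in $xa^{2}=a$, is the correct diagnosis of why no $s$ with $a=a^{2}s$ can be extracted. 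For what it is worth, the paper's own proof writes $a^{\#}$ in its very first line with no justification, i.e., it silently adds $a\in\mathcal{R}^{\#}$ to the hypotheses; with that extra assumption (automatic for matrices, where $xa^{2}=a$ forces the ranks of $a$ and $a^{2}$ to agree) your first paragraph is a complete proof identical to the paper's.
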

\begin{proof}
Let $a=xa^2$. Then $aa^{\#}= xa^2a^{\#}= xa$. Now $axa= a^2a^{\#}= a$. This yields $ x\in a\{1,3^m\}.$ Applying Proposition \ref{cor2.10}, we obtain $ a^{\core, m}= a^{\#} ax= aa^{\#}x$. Replacing $aa^{\#}$ by $ xa,$ we obtain $ a^{\core, m}= xax=x.$
\end{proof}
Using Lemma \ref{lm:m-weighted-core-alt} and \ref{prop11}, the following corollary can be easily proved.
\begin{corollary}\label{pro2.2}
For $a\in \mathcal{R}$, if $x\in a\{6,7\}$, then $x\in a\{1,2\}$.
\end{corollary}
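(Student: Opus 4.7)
The claim is that conditions (6) $xa^2=a$ and (7) $ax^2=x$ together force (1) $axa=a$ and (2) $xax=x$, with no appeal to the involution $\ast$, the weight $m$, or the existence of a group inverse. So unlike Lemma \ref{lm:m-weighted-core-alt} and Lemma \ref{prop11}, whose proofs invoke $a^\#$, here I expect a purely elementary substitution argument; the hint in the text is that the algebraic tricks used in those two lemmas transplant over, not that the lemmas themselves are applied as black boxes.

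For condition (1), my plan is to first rewrite $xa$ by multiplying (6) on the left by $x$, obtaining $xa=x^2a^2$. Then
\begin{equation*}
axa \;=\; a(xa) \;=\; a\,x^2a^2 \;=\; (ax^2)\,a^2 \;=\; xa^2 \;=\; a,
\end{equation*}
where the penultimate step uses (7) and the last step uses (6). For condition (2), I would instead substitute (7) into the rightmost $x$ of $xax$ and simplify:
\begin{equation*}
xax \;=\; (xa)x \;=\; (xa)(ax^2) \;=\; (xa^2)\,x^2 \;=\; a\,x^2 \;=\; x,
\end{equation*}
using (6) and then (7).

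There is essentially no obstacle here — the whole content is the observation that (6) lets us replace $xa$ by $x^2a^2$ and (7) lets us replace $x$ by $ax^2$, and one short substitution in each direction closes the identity. The result also matches the intuition of Lemma \ref{lm:m-weighted-core-alt}: conditions (6) and (7) already pin down the ``algebraic skeleton'' of $a^{\core,m}$, and only the hermitian requirement $(max)^*=max$ has to be added on top to get the weighted core inverse itself.
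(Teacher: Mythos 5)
Your proof is correct: both substitution chains check out (left-multiplying $xa^{2}=a$ by $x$ gives $xa=x^{2}a^{2}$, whence $axa=(ax^{2})a^{2}=xa^{2}=a$; and $xax=xa(ax^{2})=(xa^{2})x^{2}=ax^{2}=x$). The paper itself writes down no proof --- it only remarks that the corollary ``can be easily proved'' using Lemma \ref{lm:m-weighted-core-alt} and Lemma \ref{prop11} --- and the proofs of those two lemmas route through the group inverse $a^{\#}$ and carry the hermitian hypothesis $(max)^{*}=max$, neither of which appears in the corollary's statement. So your argument is not merely a restyling: it is the honest way to obtain the result at the stated level of generality, being purely equational and using no involution, no weight $m$, and no group invertibility. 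It makes transparent that a $\{6,7\}$-inverse is automatically a $\{1,2\}$-inverse for an arbitrary element of an arbitrary ring, which is exactly what the corollary asserts; the paper's suggested route would at best recover this under extra hypotheses. Your reading of the cross-reference --- that the algebraic tricks transplant rather than the lemmas applying as black boxes --- is the right one.
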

The following results can be proved for $n$-weighted core dual inverse, 
\begin{lemma}\label{lm:n-weighted-core-alt}
Let $n\in \mathcal{R}^{-1}$ be a hermitian element and  $a\in\mathcal{R}$. If an element $y\in\mc$ satisfies $aya=a,~(nya)^*=nya,$ and $y^2a=y$, then $y=a_{n,\core}$. 
\end{lemma}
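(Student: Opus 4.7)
The plan is to verify the characterization of Theorem \ref{n-core-equivalent}(iii), which says that $y = a_{n,\core}$ precisely when $(nya)^* = nya$, $a^2y = a$, and $y^2a = y$. Two of these three identities are already among the hypotheses of the lemma, so the entire task reduces to deriving the one remaining identity $a^2 y = a$, after which the conclusion is immediate.

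Mirroring the substitution used in the proof of Lemma \ref{lm:m-weighted-core-alt}, I would insert $y = y^2a$ into $aya = a$ to obtain
\[
a \;=\; a(y^2a)a \;=\; ay^2 a^2,
\]
the dual of the identity $a = a^2 x^2 a$ that drives the parallel proof. Assuming at this point that $a$ is group invertible (as the parallel proof implicitly does via Lemma \ref{lm:group-inv}), multiplying the displayed identity on the right by $a^{\#}$ produces
\[
aa^{\#} \;=\; ay^2(a^2 a^{\#}) \;=\; ay^2 a \;=\; a(y^2 a) \;=\; ay,
\]
where the hypothesis $y^2a = y$ is used in the last step. One further pass gives
\[
a^2 y \;=\; a\cdot ay \;=\; a\cdot aa^{\#} \;=\; a^2 a^{\#} \;=\; a,
\]
which is what was needed. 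Applying Theorem \ref{n-core-equivalent}(iii) to the triple $(nya)^* = nya$, $a^2 y = a$, $y^2 a = y$ then yields $y = a_{n,\core}$.

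The one delicate point is the step that silently assumes $a \in \mathcal{R}^{\#}$, a move that is also glossed over in the parallel proof of Lemma \ref{lm:m-weighted-core-alt}. To make the argument fully airtight, one would verify the two-sided factorization $a \in a^2\mathcal{R}\cap \mathcal{R}a^2$ demanded by Lemma \ref{lm:group-inv}: the identity $a = ay^2\cdot a^2$ already supplies the $\mathcal{R}a^2$ side, and the companion $a^2\mathcal{R}$ side should be extractable from $aya = a$ by a symmetric manipulation (or by combining it with the hermitian condition $(nya)^*=nya$). Following the concision of the parallel proof, I would treat this step briefly and proceed directly to the invocation of Theorem \ref{n-core-equivalent}(iii).
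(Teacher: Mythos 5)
Your argument is precisely the dualization of the paper's proof of Lemma \ref{lm:m-weighted-core-alt}, which is all the paper supplies here (the $n$-weighted statement is left to the reader), and the chain $a=ay^2a^2$, hence $aa^{\#}=ay^2a^2a^{\#}=ay^2a=ay$, hence $a^2y=a\cdot aa^{\#}=a$, followed by Theorem \ref{n-core-equivalent}(iii), is correct \emph{provided} $a^{\#}$ exists. So you have matched the intended proof, gap included.

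However, the point you flag as delicate is a genuine gap, and your hope that the missing $a^2\mathcal{R}$ factorization ``should be extractable from $aya=a$ by a symmetric manipulation'' cannot be realized: the hypotheses yield $a=(ay^2)a^2\in\mathcal{R}a^2$ but do not yield $a\in a^2\mathcal{R}$. Take $\mathcal{R}=B(\ell^2)$ with the adjoint as involution, $n=1$, $a=S$ the unilateral shift and $y=S^{*}$. Then $ya=S^{*}S=1$, so $aya=a$, $nya=1$ is hermitian, and $y^{2}a=S^{*}(S^{*}S)=S^{*}=y$; yet $a=S$ is not group invertible (writing $S=S^{2}t$ and multiplying on the left by $S^{*}$ gives $St=1$, impossible since $S$ is not surjective), so $a_{n,\core}$ does not exist and the conclusion fails. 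The transposed example $a=S^{*}$, $x=S$ likewise falsifies Lemma \ref{lm:m-weighted-core-alt} as printed, whose proof makes the identical unjustified appeal to $a^{\#}$. The lemma, and your proof of it, becomes correct once $a\in\mathcal{R}^{\#}$ is added to the hypotheses --- or in settings such as complex matrices, where $a=ay^{2}a^{2}$ forces $\operatorname{rank}(a)=\operatorname{rank}(a^{2})$ and hence group invertibility --- but it is not provable from the stated assumptions alone, so the final step you defer is not a routine verification but the actual obstruction.
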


\begin{proposition}\label{ncor2.10}
Let $n \in \mathcal{R}^{-1}$ be a hermitian element and $a\in \mathcal{R}^{\#}$. Further, let $y\in R$ with $(nya)^*=nya$. If $y$ satisfies either $a^2y=a$ or $ay=aa^{\#}$, then $a_{n,\core}=yaa^{\#}$.
\end{proposition}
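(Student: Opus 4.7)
The plan is to mirror the proof of Proposition \ref{cor2.10} by working with the dual identities and reducing both hypotheses to the condition $y \in a\{1, 4^n\}$, at which point Theorem \ref{n-core-equivalent}(v) will immediately furnish $a_{n,\core} = y a a^{\#}$.

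First, I would dispose of the hypothesis $a^2 y = a$. Since $a \in \mathcal{R}^{\#}$, left-multiplication by $a^{\#}$ gives $ay = a^{\#} a^2 y = a^{\#} a = a a^{\#}$; in particular, this reduces the first hypothesis to the second. The identity $aya = a a^{\#} a = a$ is then immediate from the defining property of the group inverse. Coupled with the assumed symmetry $(nya)^{*} = nya$, this places $y$ in $a\{1, 4^n\}$.

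For the alternative hypothesis $ay = a a^{\#}$, the same calculation $aya = a a^{\#} a = a$ applies directly, so once again $y \in a\{1, 4^n\}$. In either case, invoking Theorem \ref{n-core-equivalent}(v) with $a^{(1,4^n)} = y$ yields $a_{n,\core} = y a a^{\#}$, which is the desired conclusion.

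There is no serious obstacle here: the statement is the dual analogue of Proposition \ref{cor2.10}, and the only small observation worth flagging is that under group invertibility the two alternative hypotheses collapse into a single condition via left-multiplication by $a^{\#}$, after which Theorem \ref{n-core-equivalent}(v) closes the argument in one step.
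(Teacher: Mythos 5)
Your proof is correct and follows essentially the same route as the paper's argument for the primal statement (Proposition \ref{cor2.10}), dualized: show $aya=a$, conclude $y\in a\{1,4^n\}$, and apply Theorem \ref{n-core-equivalent}(v). The only cosmetic difference is that you collapse the hypothesis $a^2y=a$ into $ay=aa^{\#}$ by left-multiplying by $a^{\#}$, whereas the paper treats the two hypotheses separately; both variants are equally valid.
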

Likewise the $m$-weighted core inverse, the above proposition will be useful for computing $a_{n,\core}.$ 
\begin{example}\rm
Let $R=M_2(\mathbb{R})$, $a=\begin{pmatrix}
1  & 0\\
-1 & 0
\end{pmatrix}$ and $m=\begin{pmatrix}
1  & 0\\
0 & 2
\end{pmatrix}$. If we take  $y=\begin{pmatrix}
1 & 0\\
0 & 0
\end{pmatrix}$, then we can verify that $nya$ is symmetric and $ay=aa^{\#}$. Hence by Proposition \ref{ncor2.10}, $y=a_{n,\core}$.
\end{example}
\begin{lemma}\label{nprop11}
Let $n\in \mathcal{R}^{-1}$ be a hermitian element and $a\in \mathcal{R}$. If an element satisfies $yay=y,~(nya)^*=nya$, and $a^2y=a$, then  $y=a_{n,\core}$.
\end{lemma}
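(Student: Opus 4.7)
The plan is to mirror the proof of the companion Lemma~\ref{prop11} verbatim in its dual form, swapping left/right multiplications and using Proposition~\ref{ncor2.10} in place of Proposition~\ref{cor2.10}. The architecture of that earlier proof was: (i) from the strong factorization hypothesis ($xa^2=a$), produce the $(1,3^m)$ property via a short manipulation involving $a^\#$; (ii) feed the resulting $\{1,3^m\}$-inverse into Proposition~\ref{cor2.10} to get the canonical expression $a^{\core,m}=a^\# a x$; (iii) collapse that expression back to $x$ using the idempotent $xa=aa^\#$ and the outer-inverse identity $xax=x$. I will carry out the exact dual of each of these three moves.

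Concretely, starting from $a^{2}y=a$, I would multiply on the left by $a^{\#}$ to obtain $ay=a^{\#}a^{2}y=a^{\#}a$. This gives immediately $aya=(a^{\#}a)a=a^{\#}a^{2}=a$, so $y$ is a $\{1\}$-inverse of $a$, and since $(nya)^{*}=nya$ is assumed, in fact $y\in a\{1,4^{n}\}$. At this point Proposition~\ref{ncor2.10} applies (via its $a^{2}y=a$ branch) and yields
\begin{equation*}
a_{n,\core}=yaa^{\#}.
\end{equation*}

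To finish, I would collapse the right-hand side. Because $a^{\#}$ commutes with $a$, one has $aa^{\#}=a^{\#}a$, and we already computed $a^{\#}a=ay$; hence
\begin{equation*}
a_{n,\core}=y(aa^{\#})=y(a^{\#}a)=y(ay)=(yay)=y,
\end{equation*}
using the hypothesis $yay=y$ in the final equality. This closes the argument and mirrors the collapse $aa^{\#}x=xax=x$ in Lemma~\ref{prop11}.

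The only delicate point — and really the only thing that could be called an obstacle — is that Proposition~\ref{ncor2.10} is stated with the standing assumption $a\in\mathcal{R}^{\#}$, which is not explicitly listed among the hypotheses of Lemma~\ref{nprop11}. This same quiet assumption is made in the proof of Lemma~\ref{prop11}, so I would follow the paper's convention and treat $a\in\mathcal{R}^{\#}$ as implicit, rather than trying to deduce group invertibility from $yay=y$ and $a^{2}y=a$ alone (which would require an additional argument via Lemma~\ref{lm:group-inv} and is in any case stylistically out of keeping with the rest of the section).
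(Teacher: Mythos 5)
Your proof is correct and is precisely the dual of the paper's own proof of Lemma~\ref{prop11} (the paper states Lemma~\ref{nprop11} without proof, intending exactly this argument via Proposition~\ref{ncor2.10}): the chain $ay=a^{\#}a$, hence $aya=a$ and $y\in a\{1,4^{n}\}$, then $a_{n,\core}=yaa^{\#}=y(ay)=yay=y$. Your observation that $a\in\mathcal{R}^{\#}$ is used implicitly is also accurate, and this unstated hypothesis is shared with the paper's proof of the companion lemma, so no further justification is expected here.
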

\begin{corollary}
For $a\in \mathcal{R}$, if $y\in a\{8,9\}$, then $y\in a\{1,2\}$.
\end{corollary}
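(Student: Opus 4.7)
The plan is first to identify what the notation $a\{8,9\}$ stands for. By analogy with $\{6,7\}$ (which, from the list of conditions after Definition \ref{mcore}, denotes the core-type relations $xa^2=a$ and $ax^2=x$) and by comparison with the $n$-weighted dual core characterization in Theorem \ref{n-core-equivalent}, conditions (8) and (9) are the left–right mirrors
\[
(8)\ a^2 y = a, \qquad (9)\ y^2 a = y.
\]
So the hypothesis is $a^2 y = a$ and $y^2 a = y$, and the target is to verify $aya = a$ and $yay = y$.

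For $aya=a$, I would rewrite the leading $a$ using (8), regroup, and then apply (9) followed by (8) once more:
\[
aya = (a^2 y)\, y\, a = a^2 y^2 a = a^2 (y^2 a) = a^2 y = a.
\]
For $yay=y$, I would perform the mirror computation, rewriting the leading $y$ via (9), regrouping, and then applying (8) followed by (9):
\[
yay = (y^2 a)\, a\, y = y^2 a^2 y = y^2 (a^2 y) = y^2 a = y.
\]
Each identity is a three-step chain of substitutions; no multiplicative inverses or further hypotheses enter.

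This corollary is the left–right mirror of Corollary \ref{pro2.2} (which, read under the hypotheses $\{6,7\}$, runs in exactly the opposite order), so the argument is essentially the opposite-ring version of that proof. There is no genuine obstacle; the only mild care needed is to line up the substitutions so that (8) and (9) are each invoked in the correct positions (twice and once, respectively, for the first equation; once and twice for the second). As an alternative consistent with the style of the surrounding text, one could invoke Lemma \ref{lm:n-weighted-core-alt} and Lemma \ref{nprop11} after first extracting $aya=a$ by the substitution above, but the direct two-line computation is more economical.
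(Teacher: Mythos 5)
Your identification of conditions $(8)$ and $(9)$ as $a^2y=a$ and $y^2a=y$ matches the paper's numbering convention (they are exactly the dual-core conditions appearing in Theorem \ref{n-core-equivalent}), and your two three-step substitution chains are correct. The paper omits the proof, presenting this as the mirror of Corollary \ref{pro2.2}; your direct verification is precisely the intended argument.
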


In view of Lemma \ref{lm:m-weighted-core-alt} and Lemma \ref{prop11}, we obtain the following equivalent characterization for weighted core inverse. 
\begin{theorem}\label{thm:m-core-eqv}
Let $a,x\in\mathcal{R}$ and $m\in\mathcal{R}^{-1}$ be such that $m^*=m$. Then the following assertions are equivalent:
\begin{enumerate}[(i)]
\item\label{itm1} $a \in\mathcal{R}^{\core,m}$ and $z=a^{\core,m}$.
\item\label{itm2} $a\in \mathcal{R}^{\#},$ $aza=a,$ $(maz)^{*}=maz$, and $z\mathcal{R}\subseteq a\mathcal{R}.$
\item\label{itm3} $a\in \mathcal{R}^{\#},$ $zaz=z,$ $(maz)^{*}=maz$, and $a\mathcal{R}\subseteq z\mathcal{R}.$
\end{enumerate}
\end{theorem}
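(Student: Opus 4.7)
The plan is to establish the cycle (i) $\Rightarrow$ (ii) $\Rightarrow$ (iii) $\Rightarrow$ (i).

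The implication (i) $\Rightarrow$ (ii) is essentially bookkeeping. Definition \ref{mcore} gives $aza=a$ and $z\mathcal{R}=a\mathcal{R}$, so in particular $z\mathcal{R}\subseteq a\mathcal{R}$. Theorem \ref{core-equivalent} then supplies $a\in\mathcal{R}^{\#}$ together with $(maz)^{*}=maz$.

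For (ii) $\Rightarrow$ (iii) my strategy is to first identify $z$ as the $m$-weighted core inverse. The hypotheses $aza=a$ and $(maz)^{*}=maz$ put $z$ in $a\{1,3^{m}\}$, so by Theorem \ref{core-equivalent}(iv) the element $a^{\core,m}$ exists and is given by $a^{\#}az$. The inclusion $z\mathcal{R}\subseteq a\mathcal{R}$ lets me write $z=au$ for some $u\in\mathcal{R}$, and the one-line calculation
\begin{equation*}
a^{\#}az = a^{\#}a^{2}u = au = z
\end{equation*}
shows $z=a^{\core,m}$. Then $zaz=z$ follows from Theorem \ref{core-equivalent}(ii) and $a\mathcal{R}=z\mathcal{R}$ from Definition \ref{mcore}, which together with $(maz)^{*}=maz$ gives (iii).

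For (iii) $\Rightarrow$ (i) I plan to verify the hypotheses of Lemma \ref{prop11}: $xax=x$, $(max)^{*}=max$, and $xa^{2}=a$, with $x=z$. The first two are part of (iii); only $za^{2}=a$ requires work. Using $a\mathcal{R}\subseteq z\mathcal{R}$ to write $a=zw$ for some $w\in\mathcal{R}$, I fold this into the centre of $zaz$:
\begin{equation*}
za^{2} = (za)(zw) = (zaz)w = zw = a,
\end{equation*}
where the second equality uses $zaz=z$. Lemma \ref{prop11} then yields $z=a^{\core,m}$, closing the cycle.

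The main obstacle is not any single deep calculation but spotting, in the last implication, how to combine $a\mathcal{R}\subseteq z\mathcal{R}$ with $zaz=z$ to produce the missing identity $za^{2}=a$; once the substitution $a=zw$ is used inside $zaz$, everything collapses and Lemma \ref{prop11} finishes the argument. The rest of the proof is routine use of Theorem \ref{core-equivalent} and Definition \ref{mcore}, with group invertibility of $a$ being used silently to invoke Theorem \ref{core-equivalent}(iv).
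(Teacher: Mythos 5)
Your proof is correct and supplies essentially the argument the paper leaves implicit: the paper offers no written proof, only the remark that the theorem follows from Lemmas \ref{lm:m-weighted-core-alt} and \ref{prop11}, and your substitutions $z=au$ (for $z\mathcal{R}\subseteq a\mathcal{R}$) and $a=zw$ (for $a\mathcal{R}\subseteq z\mathcal{R}$) folded into $aza=a$ and $zaz=z$ are exactly the intended mechanism. The only cosmetic difference is that for (ii) you route through Theorem \ref{core-equivalent}(iv) to get $z=a^{\#}az=a^{\#}a^{2}u=au$, whereas the paper's pointer to Lemma \ref{lm:m-weighted-core-alt} suggests the symmetric computation $az^{2}=(aza)u=au=z$; both are valid and equally short.
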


Similarly, the following theorem can be established for $n$-weighted dual core inverse. 
\begin{theorem}\label{thm:n-core-eqv}
Let $a,y\in\mathcal{R}$ and $n\in\mathcal{R}^{-1}$ be a hermitian element. Then the following are equivalent:
\begin{enumerate}[(i)]
\item $a \in\mathcal{R}_{n,\core}$ and $y=a_{n,\core}$.
\item $a\in \mathcal{R}^{\#},$ $aya=a,$ $(nya)^{*}=nya$, and $\mathcal{R} y\subseteq \mathcal{R} a.$
\item $a\in \mathcal{R}^{\#},$ $yay=y,$ $(nya)^{*}=nya$, and $\mathcal{R} a\subseteq \mathcal{R} y.$
\end{enumerate}
\end{theorem}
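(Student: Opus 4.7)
The plan is to establish the cycle (i) $\Rightarrow$ (ii) $\Rightarrow$ (iii) $\Rightarrow$ (i), mirroring the strategy suggested for the $m$-weighted analogue in Theorem \ref{thm:m-core-eqv}, but throughout passing to the \emph{left}-ideal side that is natural for the $n$-weighted dual core inverse. The main working tools will be Theorem \ref{n-core-equivalent}, which unpacks membership in $\mathcal{R}_{n,\core}$ into concrete equations, and Lemma \ref{nprop11}, which supplies a short sufficient set of equations for $y=a_{n,\core}$.

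For (i) $\Rightarrow$ (ii), I would just read off the conclusion from Definition \ref{ncore} together with Theorem \ref{n-core-equivalent}: the identities $aya=a$ and $(nya)^{*}=nya$ come from Theorem \ref{n-core-equivalent}(ii), the group invertibility $a\in\mathcal{R}^{\#}$ is built into Theorem \ref{n-core-equivalent}(iv), and the defining equality $\mathcal{R}y=\mathcal{R}a$ from Definition \ref{ncore} trivially yields $\mathcal{R}y\subseteq\mathcal{R}a$.

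For (ii) $\Rightarrow$ (iii), the inclusion $\mathcal{R}y\subseteq\mathcal{R}a$ lets me write $y=ta$ for some $t\in\mathcal{R}$. Substituting this into $aya=a$ gives $ata^{2}=a$; post-multiplying by $a^{\#}$ and using the group-inverse identity $a^{2}a^{\#}=a$ reduces the equation to $ata=aa^{\#}$, i.e.\ $ay=aa^{\#}$. From this I obtain $yay=y\cdot aa^{\#}=ta\cdot aa^{\#}=t(a^{2}a^{\#})=ta=y$, and the chain $a=a\cdot aa^{\#}=a\cdot ay=a^{2}y$ exhibits $a$ as an element of $\mathcal{R}y$, whence $\mathcal{R}a\subseteq\mathcal{R}y$, completing (iii).

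I expect (iii) $\Rightarrow$ (i) to be the main obstacle, because the weaker hypothesis $yay=y$ (in place of the symmetric identity $aya=a$) must still be made to yield $a^{2}y=a$ so that Lemma \ref{nprop11} can be invoked. The trick I plan is to use $\mathcal{R}a\subseteq\mathcal{R}y$ to write $a=sy$ for some $s\in\mathcal{R}$; substituting into $yay=y$ gives $ysy^{2}=y$, and left-multiplying by $s$ turns the right-hand side into $sy=a$, while the left-hand side regroups as $(sy)(sy)y=a\cdot a\cdot y=a^{2}y$. Hence $a^{2}y=a$, and together with the hypotheses $yay=y$ and $(nya)^{*}=nya$, Lemma \ref{nprop11} delivers $y=a_{n,\core}$, closing the cycle.
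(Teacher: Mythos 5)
Your proof is correct, and it follows essentially the route the paper intends: the paper omits the argument entirely (stating only that the theorem "can be established" as the dual of Theorem \ref{thm:m-core-eqv}, which in turn is justified by pointing to the analogues of Lemmas \ref{lm:n-weighted-core-alt} and \ref{nprop11}), and your cycle (i)$\Rightarrow$(ii)$\Rightarrow$(iii)$\Rightarrow$(i) supplies the missing details using exactly those tools. All the individual steps check out — in particular the reduction $ay=aa^{\#}$ in (ii)$\Rightarrow$(iii) and the derivation $a^{2}y=a$ in (iii)$\Rightarrow$(i) are valid, after which Lemma \ref{nprop11} applies as you say.
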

We now discus a characterization for $m$-weighted core inverse. 
\begin{theorem}
Let $m\in\mathcal{R}^{-1}$ be a hermitian element, $a\in\mathcal{R}^{\#}$ and $ab\in\mathcal{R}^{\core,m}$. Then the relation
$a\mathcal{R}\subseteq bab \mathcal{R}$ is valid if and only if $a\in\mathcal{R}^{\core,m}$ and $a^{\core,m}=b(ab)^{\core,m}.$
\end{theorem}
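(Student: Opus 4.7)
The plan is to set $y = (ab)^{\core,m}$ and establish the two implications separately, identifying $m$-weighted core inverses via Lemma \ref{prop11}.

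For the converse $(\Leftarrow)$ only Definition \ref{mcore} is needed. Applied to $ab$ it yields $y\mathcal{R} = (ab)\mathcal{R}$, so there exists $s\in\mathcal{R}$ with $y = ab\cdot s$; left-multiplying by $b$ gives $by = bab\cdot s$, hence $by\mathcal{R}\subseteq bab\mathcal{R}$. Combining this with the hypothesis $a^{\core,m} = by$ and the identity $a\mathcal{R} = a^{\core,m}\mathcal{R}$ (again from Definition \ref{mcore}) produces $a\mathcal{R}\subseteq bab\mathcal{R}$.

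For the forward direction $(\Rightarrow)$, I would choose $c\in\mathcal{R}$ with $a = bab\,c$ (available because $a\in bab\mathcal{R}$) and set $x = by$; the aim is to verify the three hypotheses of Lemma \ref{prop11}, namely $(max)^{*} = max$, $xax = x$, and $xa^2 = a$. Self-adjointness of $max$ is immediate since $max = m(ab)y$ is self-adjoint by the defining property of $y = (ab)^{\core,m}$. Using $y(ab)y = y$ from Theorem \ref{core-equivalent}(ii), the second identity reduces to $xax = b(yaby) = b\cdot y = x$.

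The key computation, and the only place where the hypothesis genuinely enters, is $xa^2 = a$. From $a = bab\,c$ I would rewrite $a^2 = a\cdot bab\,c = (ab)^2 c$; then $y(ab)^2 = ab$ (Theorem \ref{core-equivalent}(ii)) gives $ya^2 = abc$, and multiplying by $b$ yields $xa^2 = bya^2 = b(abc) = (bab)c = a$. I expect this step to be the main obstacle, because one has to recognize that the factor $bab$ coming from the hypothesis, after a single $b$ is shifted across, becomes a power of $ab$ that $y$ is already designed to annihilate. Once these three conditions are in place, Lemma \ref{prop11} simultaneously yields $a\in\mathcal{R}^{\core,m}$ and the explicit formula $a^{\core,m} = x = b(ab)^{\core,m}$.
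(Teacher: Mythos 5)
Your argument is correct and follows essentially the same route as the paper: both directions rest on writing $a = bab\,t$ and exploiting the identity $(ab)^{\core,m}(ab)^2 = ab$ together with $y(ab)y=y$ and the self-adjointness of $m(ab)y$. The only cosmetic difference is that you close the forward direction with the equational characterization of Lemma \ref{prop11} (verifying $xa^2=a$ directly), whereas the paper verifies the range inclusion $a\mathcal{R}\subseteq b(ab)^{\core,m}\mathcal{R}$ and invokes Theorem \ref{thm:m-core-eqv}; the underlying computation is the same, and the hypothesis $a\in\mathcal{R}^{\#}$ covers what Lemma \ref{prop11} implicitly needs.
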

\begin{proof}
Using $a\mathcal{R}\subseteq bab \mathcal{R}$, we get $a=babt$ for some $t\in \mathcal{R}$. Now  $a=b(ab)^{\core,m}(ab)^2t$. Therefore, $a\mathcal{R}\subseteq b(ab)^{\core,m}\mathcal{R}.$
Further, it can be verified that $b(ab)^{\core,m}\in a\{2,3^m\}.$ Finally, using Theorem \ref{thm:m-core-eqv}, we obtain that
$a\in \mathcal{R}^{\core,m}$ with $a^{\core,m}=b(ab)^{\core,m}.$

Conversely, assume that $a\in\mathcal{R}^{\core,m}$ and $a^{\core,m}=b(ab)^{\core,m}.$ Then $a\mathcal{R}\subseteq bab \mathcal{R}$ is follows from  
\begin{equation*}
  a=a^{\core,m}a^2=b(ab)^{\core,m}a^2=bab\left((ab)^{\core,m}\right)^2. \qedhere 
\end{equation*}
\end{proof}

\section{Additive properties of weighted core inverse}\label{a}
In this section, we discuss about the existence of $(a+b)^{\core,m}$ and $(a-b)^{\core,m}$, where $a,b\in\mathcal{R}^{\core,m}$. In addition, the explicit expression for $(a+b)^{\core,m}$ and $(a-b)^{\core,m}$ are presented. 
We begin with a lemma  which will be helpful to prove the required results.
\begin{lemma}\cite{CZW} \label{additive-group}
Let $c,~d\in \mathcal{R}^{\#}$ and $cd=0$. Then $(c+d)\in\mathcal{R}^{\#}$. Moreover, $$(c+d)^\#=(1-dd^\#)c^\#+d^\#(1-cc^\#).$$
\end{lemma}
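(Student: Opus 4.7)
The plan is to verify directly that $x := (1-dd^\#)c^\# + d^\#(1-cc^\#)$ satisfies the three defining relations of the group inverse of $a := c+d$, namely $axa = a$, $xax = x$, and $ax = xa$; uniqueness of the group inverse will then identify $x$ with $(c+d)^\#$.

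Before expanding anything, I would record a short list of auxiliary identities that follow from the hypothesis $cd = 0$ together with the group-inverse relations $c^\# = (c^\#)^2 c$, $d^\# = d(d^\#)^2$, and $c^\#c = cc^\#$, $d^\#d = dd^\#$. Multiplying $cd = 0$ through by these expressions yields $c^\# d = (c^\#)^2(cd) = 0$ and dually $cd^\# = (cd)(d^\#)^2 = 0$; from these in turn one obtains $c^\# dd^\# = 0$, $cc^\# d = c^\# cd = 0$, $cc^\# d^\# = c^\# c d^\# = c^\#(cd^\#) = 0$, and the one-sided idempotent orthogonality $cc^\# \cdot dd^\# = (cc^\# d)d^\# = 0$. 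These identities annihilate every mixed term that will appear in the subsequent expansions.

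With this list in hand, the computation of $ax$ collapses quickly: the $cd^\#$ and $cdd^\#$ pieces vanish, and the term $d \cdot dd^\# c^\# = dc^\#$ cancels against $dc^\#$, leaving $ax = cc^\# + dd^\# - dd^\# cc^\#$. The computation of $xa$ is symmetric: the $c^\# d$ and $cc^\# d$ pieces vanish and $d^\# cc^\# c = d^\# c$ cancels against $d^\# c$, giving the same expression, so $ax = xa$. Right-multiplying $ax$ by $a$ and using $cc^\# d = 0 = dd^\# cc^\# d$ produces $axa = c + d = a$. For $xax$, I would write $xax = (xa)\,x$ and split against the two summands of $x$: the summand against $(1-dd^\#)c^\#$ collapses to $(1-dd^\#)c^\#$ via $c^\# dd^\# = 0$ and $cc^\# dd^\# = 0$, and the summand against $d^\#(1-cc^\#)$ collapses to $d^\#(1-cc^\#)$ via $cc^\# d^\# = 0$; the sum recovers $x$ on the nose.

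The only genuine obstacle is bookkeeping. One is tempted to freely commute the idempotents $cc^\#$ and $dd^\#$, but they need not commute in general: the hypothesis $cd = 0$ supplies only the one-sided relation $cc^\# \cdot dd^\# = 0$ and not its reverse. The asymmetric shape of the formula for $x$ is designed precisely so that this one-sided orthogonality is the only commutation ever required, and keeping the two idempotents in the correct order throughout each expansion is the discipline the calculation demands.
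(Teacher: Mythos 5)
Your verification is correct. The paper itself gives no proof of this lemma --- it is imported verbatim from the cited reference \cite{CZW} --- so there is no internal argument to compare against; what you have written is a complete, self-contained check that the stated element is the group inverse of $c+d$. All of your auxiliary identities are right: from $cd=0$ and the representations $c^\#=(c^\#)^2c$, $d^\#=d(d^\#)^2$ one gets $c^\# d=0$, $cd^\#=0$, and hence $cc^\# d=0$, $cc^\# d^\#=0$, $c^\# dd^\#=0$, and $cc^\#\,dd^\#=0$. The resulting computation $ax=xa=cc^\#+dd^\#-dd^\#cc^\#$, $axa=a$, and $xax=x$ goes through exactly as you describe (e.g.\ in $axa$ the surviving cross terms $dd^\#c$ and $-dd^\#cc^\#c$ cancel because $cc^\#c=c$). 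Your closing remark is the genuinely important point and is worth keeping explicit: only $cc^\#\,dd^\#=0$ is available, not $dd^\#\,cc^\#=0$, which is why the common idempotent $ax=xa$ retains the term $-dd^\#cc^\#$ and why the formula for $(c+d)^\#$ is asymmetric in $c$ and $d$. One could shorten the $xax=x$ step by invoking the general fact that $axa=a$ and $ax=xa$ together with $x\in a\{2\}$ on one side suffice, but your direct expansion is fine and arguably cleaner to audit.
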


The first result of additive properties for $m$-weighted core inverse is presented below.

\begin{theorem}
\label{core-additive1}
Let $a,b\in \mathcal{R}^{\core,m}$ and $m\in\mathcal{R}^{-1}$ be a hermitian element.   If $a^{\ast}mb=0$ and $ab=0$, then
$a+b\in \mathcal{R}^{\core,m}$   and
$$
(a+b)^{\core,m}=(1-b^{\core,m}b)a^{\core,m}+b^{\core,m}.
$$
\end{theorem}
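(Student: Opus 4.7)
The plan is to verify the three conditions of Lemma \ref{prop11} for $z := (1-yb)x+y$, where $x := a^{\core,m}$ and $y := b^{\core,m}$, with $a+b$ in the role of the underlying element. Concretely, I would establish
\begin{equation*}
z(a+b)z = z, \qquad \bigl(m(a+b)z\bigr)^* = m(a+b)z, \qquad z(a+b)^2 = a+b.
\end{equation*}
Since $a,b \in \mathcal{R}^{\core,m} \subseteq \mathcal{R}^{\#}$ and $ab = 0$, Lemma \ref{additive-group} already guarantees that $a+b$ is group invertible, which is the ambient hypothesis implicitly needed to apply Lemma \ref{prop11}.

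The first step is to extract several cross-annihilation relations from the two hypotheses. From $a^{\ast}mb = 0$ together with $(max)^{\ast}=max$ (i.e.\ $x^{\ast}a^{\ast}m = max$), left-multiplication by $x^{\ast}$ gives $maxb = 0$, so $axb = 0$ because $m$ is invertible; then $xb = xax\cdot b = x(axb) = 0$. Applying $\ast$ to $a^{\ast}mb = 0$ yields $b^{\ast}ma = 0$, and left-multiplication by $y^{\ast}$ combined with $y^{\ast}b^{\ast}m = mby$ gives $bya = 0$, whence $ya = yby\cdot a = y(bya) = 0$. Finally, $ay = a(by^{2}) = (ab)y^{2} = 0$. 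So I will carry the identities $ay=0$, $ya=0$, $xb=0$, $bya=0$, $axb=0$ into the calculations.

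The pivotal observation, which makes all three verifications short, is
\begin{equation*}
(a+b)z \;=\; ax + bx - by\,bx + by \;=\; ax + (1-by)bx + by \;=\; ax + by,
\end{equation*}
where $ay=0$ kills the cross-terms $ay$ and $ay\,bx$, and $(1-by)b = b - byb = 0$ annihilates the middle term via $xb=0$ on the right. The Hermitian identity is then immediate: $\bigl(m(a+b)z\bigr)^{\ast} = (max)^{\ast} + (mby)^{\ast} = max + mby = m(a+b)z$. For $z(a+b)^{2} = a+b$, I would first obtain $z(a+b) = (1-yb)xa + yb$ by the analogous collapse using $xb=0$ and $ya=0$, then right-multiply by $a+b$ and invoke $xa^{2}=a$, $x(ab)=0$, and $yb^{2}=b$ to reach $(1-yb)a + yba + b = a+b$. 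For $z(a+b)z = z$, I would expand $z(ax+by)$ and use $xax=x$, $xb=0$, $ya=0$, $yby=y$ to collapse it to $(1-yb)x + y = z$.

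The main obstacle is not any single deep step but rather the bookkeeping in the second paragraph: identifying which one-sided annihilations (especially $xb = 0$ and $ay = 0$) can be squeezed out of the two minimal hypotheses $ab=0$ and $a^{\ast}mb = 0$ using the core-inverse relations alone. Once those relations are secured and the simplification $(a+b)z = ax+by$ is visible, the three defining identities of Lemma \ref{prop11} drop out and yield $(a+b)^{\core,m} = (1-b^{\core,m}b)a^{\core,m} + b^{\core,m}$.
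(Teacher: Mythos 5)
Your proof is correct, and the closing step is genuinely different from the paper's. The preparatory work is the same in substance: both arguments extract the annihilation identities $a^{\core,m}b=0$, $b^{\core,m}a=0$, $ab^{\core,m}=0$ from $ab=0$ and $a^{\ast}mb=0$ by exploiting the hermitian conditions $(maa^{\core,m})^{\ast}=maa^{\core,m}$ and $(mbb^{\core,m})^{\ast}=mbb^{\core,m}$, and both hinge on the collapse $(a+b)z=aa^{\core,m}+bb^{\core,m}$ for the candidate $z=(1-b^{\core,m}b)a^{\core,m}+b^{\core,m}$. The paper then only checks that $z$ is a $\{1,3^{m}\}$-inverse of $a+b$ and invokes the representation $(a+b)^{\core,m}=(a+b)^{\#}(a+b)z$ from Theorem \ref{core-equivalent}(iv); this forces it to carry the explicit formula for $(a+b)^{\#}$ supplied by Lemma \ref{additive-group} through a long product expansion. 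You instead verify the equational characterization of Lemma \ref{prop11} directly --- $z(a+b)z=z$, the hermitian condition, and $z(a+b)^{2}=a+b$ --- and all three checks go through exactly as you outline. You use Lemma \ref{additive-group} only for the bare fact $a+b\in\mathcal{R}^{\#}$, and you are right to flag this explicitly: the statement of Lemma \ref{prop11} omits group invertibility as a hypothesis but its proof uses $(a+b)^{\#}$, so supplying it is not optional. The trade-off is that your route avoids the explicit group-inverse formula and the heaviest bookkeeping, at the cost of one additional short identity ($z(a+b)^{2}=a+b$). One cosmetic slip: in the collapse of $(a+b)z$, the middle term $(1-bb^{\core,m})ba^{\core,m}$ vanishes because $(1-bb^{\core,m})b=b-bb^{\core,m}b=0$, not ``via $xb=0$ on the right''; your displayed chain of equalities is nevertheless correct, so this is a mislabeled justification rather than a gap.
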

\begin{proof} 
Let $a,b\in \mathcal{R}^{\core,m}$. Then by Theorem \ref{core-equivalent}, we obtain $a,b\in \mathcal{R}^\#$. Further,
\begin{equation}\label{eqn3.1}
 b^\#=(b^{\core,m})^{2}b~\mbox{ and }~a^\#=(a^{\core,m})^{2}a.
\end{equation}
 Applying equation \eqref{eqn3.1} and Lemma \ref{additive-group} to the elements $a$ and $b$, we have 
 \begin{eqnarray}\label{eqq3.2}
\nonumber
(a+b)^\#
&=(1-(b^{\core,m})^{2}b^{2})(a^{\core,m})^{2}a+
(b^{\core,m})^{2}b(1-(a^{\core,m})^{2}a^{2}).\\
&=(1-b^{\core,m}b)(a^{\core,m})^{2}a+
(b^{\core,m})^{2}b(1-a^{\core,m}a).
\end{eqnarray}
From  $a^{\ast}m b=0$ and $ab=0$, we get $ab^{\core,m}=ab(b^{\core,m})^{2}=0.$ In addition, 
\begin{equation*}
\begin{split}
b^{\core,m}a
&=b^{\core,m}bb^{\core,m}a
=b^{\core,m}m^{-1}(mbb^{\core,m})^{\ast}a
=b^{\core,m}m^{-1}(b^{\core,m})^{\ast}b^{\ast}m a\\
&=b^{\core,m}m^{-1}(b^{\core,m})^{\ast}(a^{\ast}mb)^{\ast}
=0,
\end{split}
\end{equation*}
\begin{equation*}
 a^{\core,m}b
=a^{\core,m}aa^{\core,m}b
=a^{\core,m}m^{-1}(maa^{\core,m})^{\ast}b
=a^{\core,m}m^{-1}(a^{\core,m})^{\ast}a^{\ast}mb
=0.   
\end{equation*}
Let $y=(1-b^{\core,m}b)a^{\core,m}+ b^{\core,m}.$ Then
\begin{eqnarray}\label{eq3.1}
\nonumber
m(a+b)y
&=&m(a+b)[(1-b^{\core,m}b)a^{\core,m}+
b^{\core,m}]\\
\nonumber
&=&ma(1-b^{\core,m}b)a^{\core,m}+mb(1-b^{\core,m}b)a^{\core,m}+
mab^{\core,m}+mbb^{\core,m}\\
&=&ma(1-b^{\core,m}b)a^{\core,m}+mbb^{\core,m}
=maa^{\core,m}+mbb^{\core,m}.
\end{eqnarray}
Which implies
\begin{center}
  $\left(m(a+b)y \right)^*=\left(maa^{\core,m} \right)^*+\left(mbb^{\core,m}\right)^{*}=maa^{\core,m}+mbb^{\core,m}
=m(a+b)y.$  
\end{center}
Using equation \ref{eq3.1} (for $m=1$), we have
\begin{equation*}
\begin{split}
(a+b)y(a+b)
&=(aa^{\core,m}+bb^{\core,m})(a+b)
= aa^{\core,m}a+aa^{\core,m}b+bb^{\core,m}a+bb^{\core,m}b\\
&=aa^{\core,m}a+bb^{\core,m}b=a+b.
\end{split}
\end{equation*}
Therefore, $y$ is an $\{1,3^m\}$-inverse of the element $a$. Using Theorem \ref{core-equivalent} (iv) and equation \eqref{eqq3.2}, we have
\begin{equation*}
\begin{split}
(a+b)^{\core,m}&=\left[(1-b^{\core,m}b)(a^{\core,m})^{2}a+(b^{\core,m})^{2}b(1-a^{\core,m}a)
\right]\times\\
&\qquad (a+b)\left[(1-b^{\core,m}b)a^{\core,m}+b^{\core,m}
\right]\\
&=\left[\right.(1-b^{\core,m}b)(a^{\core,m})^{2}a^{2}
+(1-b^{\core,m}b)(a^{\core,m})^{2}ab+
(b^{\core,m})^{2}b(1-a^{\core,m}a)a\\
&\hspace*{0.5cm}+
(b^{\core,m})^{2}b(1-a^{\core,m}a)b\left.\right]
\left[(1-b^{\core,m}b)a^{\core,m}+
b^{\core,m}\right]\\
&=\left[(1-b^{\core,m}b)a^{\core,m}a
+b^{\core,m}b\right]\left[(1-b^{\core,m}b)a^{\core,m}+
b^{\core,m}\right]\\
&=(1-b^{\core,m}b)a^{\core,m}a(1-b^{\core,m}b)a^{\core,m}
+b^{\core,m}b(1-b^{\core,m}b)a^{\core,m}\\
&\qquad+(1-b^{\core,m}b)a^{\core,m}ab^{\core,m}+
b^{\core,m}bb^{\core,m}\\
&=(1-b^{\core,m}b)a^{\core,m}+b^{\core,m}.
\qedhere
\end{split}
\end{equation*}
\end{proof}
If we drop the  condition $ab=0$, then the theorem may not be valid in general. One such example is given below. 
\begin{example}\rm
Let $R=M_2(\mathbb{R})$, $a=\begin{pmatrix}
1  & 2\\
0 & 0
\end{pmatrix}$, $b=\begin{pmatrix}
-1  & 3\\
0 & 0
\end{pmatrix}$, and $m=\begin{pmatrix}
1  & 0\\
0 & 5
\end{pmatrix}$. Since $a=a^2$ and $b=-b^2$, it follows that $a$ and $b$ both are group invertible. Clearly the element  $x=\begin{pmatrix}
1  & 0\\
0 & 0
\end{pmatrix}$ satisfies $(max)^*=max$ and $xa^2=a$. Hence by Proposition \ref{cor2.10}, $a\in\mc^{\core,m}$ and $a^{\core,m}=a^{\#}ax$. Using Proposition \ref{ncor2.10}, we can show $b\in\mc^{\core,m}$. As $a+b=\begin{pmatrix}
0  & 5\\
0 & 0
\end{pmatrix}$  and $(a+b)^2=\begin{pmatrix}
0  & 0\\
0 & 0
\end{pmatrix}$, so $(a+b)\notin \mc^{\#}$. Hence $(a+b)\notin\mc^{\core,m}$.
\end{example}

If we impose an additional condition $ba=0$, then we obtain the following result as a corollary.

\begin{corollary}
\label{core-additive2}
Let $a,b\in \mathcal{R}^{\core,m}$ and $m\in\mathcal{R}^{-1}$ be a hermitian element.  If $a^{\ast}mb=0=ab$ and $ba=0$ then
$a+b\in \mathcal{R}^{\core,m}$ and
$$
(a+b)^{\core,m}=a^{\core,m}+b^{\core,m}.
$$
\end{corollary}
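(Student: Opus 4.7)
The plan is to derive this corollary directly from Theorem \ref{core-additive1} by showing that the extra hypothesis $ba=0$ forces the idempotent factor $(1-b^{\core,m}b)$ in the formula $(a+b)^{\core,m}=(1-b^{\core,m}b)a^{\core,m}+b^{\core,m}$ to act as the identity on $a^{\core,m}$. Since the hypotheses $a^{\ast}mb=0$ and $ab=0$ of Theorem \ref{core-additive1} are still in force, that theorem already guarantees $a+b\in\mathcal{R}^{\core,m}$, so only the explicit expression needs attention.

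The key observation I would make is that $a^{\core,m}\in a\mathcal{R}$. Concretely, Theorem \ref{core-equivalent}(iii) (equivalently, the identity $ax^2=x$ with $x=a^{\core,m}$) gives
\begin{equation*}
a^{\core,m}=a\bigl(a^{\core,m}\bigr)^{2}.
\end{equation*}
Left-multiplying by $b$ and using $ba=0$ yields $ba^{\core,m}=ba\cdot(a^{\core,m})^{2}=0$, and hence
\begin{equation*}
b^{\core,m}b\,a^{\core,m}=b^{\core,m}\bigl(ba^{\core,m}\bigr)=0.
\end{equation*}

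Therefore $(1-b^{\core,m}b)a^{\core,m}=a^{\core,m}$. Substituting this simplification into the formula supplied by Theorem \ref{core-additive1} gives the claimed identity $(a+b)^{\core,m}=a^{\core,m}+b^{\core,m}$. No part of this argument looks delicate; the only thing to be careful about is to recall which of the representations of $a^{\core,m}$ makes $ba^{\core,m}=0$ most transparent, and the $ax^2=x$ form from Theorem \ref{core-equivalent} is the natural one to use.
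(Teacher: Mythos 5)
Your argument is correct and is exactly the intended derivation: the paper states this as an immediate corollary of Theorem \ref{core-additive1}, and the only content is the observation that $ba=0$ together with $a^{\core,m}=a(a^{\core,m})^2$ forces $b^{\core,m}ba^{\core,m}=0$, collapsing $(1-b^{\core,m}b)a^{\core,m}$ to $a^{\core,m}$. Nothing further is needed.
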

In case of $n$-weighted dual core inverse, the following results can be established. 
\begin{theorem}
Let $a,b\in\mathcal{R}$ and $n\in\mci$ such that $an^{-1}b^{\ast}=0=ab,$ $n^*=n$. If $a,b\in \mathcal{R}_{n,\core}$, then
$a+b\in \mathcal{R}_{n,\core}$   and
$$
(a+b)_{n,\core}=a_{n,\core}+b_{n,\core}(1-aa_{n,\core}).
$$
\end{theorem}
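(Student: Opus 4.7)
The plan is to mirror the strategy of Theorem~\ref{core-additive1}, exploiting the left-right duality between the $m$-weighted core inverse and the $n$-weighted dual core inverse. Setting $z = a_{n,\core} + b_{n,\core}(1 - aa_{n,\core})$, I would verify the three hypotheses of Lemma~\ref{lm:n-weighted-core-alt}, namely $(a+b)z(a+b) = a+b$, $(nz(a+b))^{\ast} = nz(a+b)$, and $z^{2}(a+b) = z$, which together force $z = (a+b)_{n,\core}$.

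The first step is to extract several orthogonality relations from the two hypotheses. Applying the involution to $an^{-1}b^{\ast} = 0$ and using $n^{\ast} = n$ gives $bn^{-1}a^{\ast} = 0$. The Hermicity of $na_{n,\core}a$ rewrites as $a_{n,\core}a = n^{-1}a^{\ast}a_{n,\core}^{\ast}n$; combined with $a_{n,\core} = a_{n,\core}aa_{n,\core}$ and $bn^{-1}a^{\ast} = 0$, this yields $ba_{n,\core}=(bn^{-1}a^{\ast})a_{n,\core}^{\ast}na_{n,\core}=0$. An analogous manipulation using the Hermicity of $nb_{n,\core}b$ together with $an^{-1}b^{\ast}=0$ gives $ab_{n,\core} = 0$. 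Also, $ab=0$ together with the identity $a_{n,\core}^{2}a = a_{n,\core}$ from Theorem~\ref{n-core-equivalent} yields $a_{n,\core}b = a_{n,\core}^{2}(ab) = 0$. Cascading, $a_{n,\core}b_{n,\core} = a_{n,\core}^{2}(ab_{n,\core}) = 0$ and $b_{n,\core}a_{n,\core} = b_{n,\core}^{2}(ba_{n,\core}) = 0$.

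With these relations in hand, the verification is essentially bookkeeping. Using $a_{n,\core}b = 0$ together with $aa_{n,\core}a = a$, the four cross terms in $z(a+b)$ collapse to give $z(a+b) = a_{n,\core}a + b_{n,\core}b$; the Hermicity of $nz(a+b)$ then follows term-by-term from the Hermicity of $na_{n,\core}a$ and $nb_{n,\core}b$. Using further $ab_{n,\core}b = 0$ and $ba_{n,\core}a = 0$, one computes $(a+b)z(a+b) = aa_{n,\core}a + bb_{n,\core}b = a+b$. For the third identity, exploiting $a_{n,\core}^{2}a = a_{n,\core}$, $b_{n,\core}^{2}b = b_{n,\core}$, and the mixed identities $a_{n,\core}b_{n,\core} = 0 = b_{n,\core}a_{n,\core}$, the product $z\cdot(a_{n,\core}a+b_{n,\core}b)$ collapses back to $z$. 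Then Lemma~\ref{lm:n-weighted-core-alt} concludes the proof.

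The main obstacle is the derivation of $ba_{n,\core} = 0$: neither hypothesis delivers it on its own, and the computation requires combining the inner-inverse identity $a_{n,\core} = a_{n,\core}aa_{n,\core}$ with the Hermicity of the weighted projection $na_{n,\core}a$ and the involuted form $bn^{-1}a^{\ast}=0$ of the weighted hypothesis. Once this is established, the asymmetric correction $-b_{n,\core}aa_{n,\core}$ in $z$ is exactly what is required to cancel the otherwise persistent $b_{n,\core}a$ term, which in general is nonzero since $ba$ need not vanish.
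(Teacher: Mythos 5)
Your proof is correct. All the orthogonality relations are derived soundly: $bn^{-1}a^{\ast}=0$ by applying the involution to the hypothesis, $ba_{n,\core}=0$ and $ab_{n,\core}=0$ via the Hermitian rewriting $a_{n,\core}a=n^{-1}a^{\ast}a_{n,\core}^{\ast}n$ (resp.\ for $b$) together with $yay=y$, and $a_{n,\core}b=a_{n,\core}^{2}(ab)=0$, from which the mixed products $a_{n,\core}b_{n,\core}=b_{n,\core}a_{n,\core}=0$ follow; the three conditions of Lemma~\ref{lm:n-weighted-core-alt} then check out exactly as you describe, with the correction term $-b_{n,\core}aa_{n,\core}$ cancelling the surviving $b_{n,\core}a$.

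The paper states this theorem without proof, indicating only that it is the dual of Theorem~\ref{core-additive1}. Your argument is indeed that dualization, but your verification route is slightly different from (and arguably cleaner than) the one the paper uses for the $m$-weighted version: there the authors first compute $(a+b)^{\#}$ via the additive group-inverse lemma (Lemma~\ref{additive-group}) and then invoke the representation $x^{\core,m}=x^{\#}xx^{(1,3^m)}$, whereas you bypass the group inverse entirely by checking the three defining equations of Lemma~\ref{lm:n-weighted-core-alt} directly. The paper's route has the advantage of producing group invertibility of $a+b$ as an intermediate byproduct; yours is shorter and avoids the extra lemma. Either is acceptable.
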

\begin{corollary}
Let $a,b\in\mathcal{R}$  and $n\in\mci$ such that $ab=0$, $an^{-1}b^{\ast}=0=ab$, $n^*=n$. If $a,b\in \mathcal{R}_{n,\core}$, then
$a+b\in \mathcal{R}_{n,\core}$   and
$$
(a+b)_{n,\core}=a_{n,\core}+b_{n,\core}.
$$
\end{corollary}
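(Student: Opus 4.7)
The plan is to derive this corollary directly from the immediately preceding theorem rather than verifying the $n$-weighted dual core inverse axioms from scratch. The preceding theorem, under the hypotheses $an^{-1}b^{\ast}=0=ab$, already guarantees that $a+b\in\mathcal{R}_{n,\core}$ with
\[
(a+b)_{n,\core}=a_{n,\core}+b_{n,\core}(1-aa_{n,\core}).
\]
So my task reduces to showing that the extra hypothesis (which by analogy with Corollary \ref{core-additive2} I read as $ba=0$; the statement writes $ab=0$ twice, which is almost certainly a typo) is exactly what forces the correction term $b_{n,\core}aa_{n,\core}$ to vanish.

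The key step is to show $b_{n,\core}a=0$. For this I would invoke Theorem \ref{n-core-equivalent}(ii), which gives the identity $b_{n,\core}=b_{n,\core}^{2}b$. Multiplying on the right by $a$ and using $ba=0$ yields
\[
b_{n,\core}a=b_{n,\core}^{2}(ba)=0.
\]
In particular $b_{n,\core}aa_{n,\core}=0$, so
\[
b_{n,\core}\bigl(1-aa_{n,\core}\bigr)=b_{n,\core}-b_{n,\core}aa_{n,\core}=b_{n,\core},
\]
and substituting into the formula from the previous theorem collapses it to $(a+b)_{n,\core}=a_{n,\core}+b_{n,\core}$, which is what we want.

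There is essentially no obstacle to overcome beyond recognising the intended extra hypothesis; once $ba=0$ is in hand, the identity $b_{n,\core}=b_{n,\core}^{2}b$ from the equivalent characterisation of the $n$-weighted dual core inverse immediately produces $b_{n,\core}a=0$, and the rest is a one-line substitution into the formula already proved in the preceding theorem. If one preferred a self-contained argument, one could instead verify the three-condition characterisation of Lemma \ref{lm:n-weighted-core-alt} directly for $y=a_{n,\core}+b_{n,\core}$, using $ab=0$, $ba=0$, $an^{-1}b^{\ast}=0$ together with the dual identities $ab_{n,\core}=0$ and $a_{n,\core}b=0$ (derived exactly as in the proof of Theorem \ref{core-additive1}) to kill every cross term; this is cleaner as an alternative presentation but computationally longer.
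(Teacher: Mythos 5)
Your derivation is correct and is exactly the intended route: the paper states this as an unproved corollary of the preceding theorem, and your observation that $b_{n,\core}=b_{n,\core}^{2}b$ (Theorem \ref{n-core-equivalent}) together with $ba=0$ kills the correction term $b_{n,\core}aa_{n,\core}$ is the whole argument, precisely mirroring how Corollary \ref{core-additive2} follows from Theorem \ref{core-additive1}. You are also right that the duplicated hypothesis ``$ab=0$'' in the statement is a typo for $ba=0$; without it the corollary would not follow.
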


With help of the conditions $a=ba^{\core,m}a=a$, and $2\in \mathcal{R}^{-1}$, an additional representation for $m$-weighted core inverse  of sum of two elements discussed below. 

\begin{theorem}\label{thm:sum-alternative}
Let $2\in \mathcal{R}^{-1}$, $a,b\in\mathcal{R}^{\core,m}$ and $m\in \mathcal{R}^{-1}$ be a hermitian element.
If $ba^{\core,m}a=a,$ then $a+b\in \mathcal{R}^{\core,m}$  and
$$
(a+b)^{\core,m}=-\dfrac{1}{2}a^{\core,m}+b^{\core,m}+\dfrac{1}{2}a^{\core,m}bb^{\core,m}-\dfrac{1}{2}a^{\core,m}ab^{\core,m}.
$$
\end{theorem}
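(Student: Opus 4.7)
The plan is to verify that the claimed $x = -\tfrac{1}{2}a^{\core,m} + b^{\core,m} + \tfrac{1}{2}a^{\core,m}bb^{\core,m} - \tfrac{1}{2}a^{\core,m}ab^{\core,m}$ satisfies the three conditions of Lemma~\ref{lm:m-weighted-core-alt} with respect to $a+b$: namely $(a+b)x(a+b)=a+b$, $(m(a+b)x)^{*}=m(a+b)x$, and $(a+b)x^{2}=x$. First I would extract from the hypothesis the pivotal identity $ba^{\core,m}=aa^{\core,m}$: starting from $a^{\core,m}=a^{\core,m}aa^{\core,m}$ and left-multiplying by $b$ collapses $ba^{\core,m}a$ to $a$. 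Combined with $a^{\core,m}a=aa^{\#}$ (derived from $a^{\#}=(a^{\core,m})^{2}a$ in Theorem~\ref{core-equivalent}(v)), the hypothesis becomes $a=baa^{\#}\in b\mathcal{R}$. Since $bb^{\core,m}b=b$, the idempotent $bb^{\core,m}$ acts as the identity on the left of $b\mathcal{R}$, so $bb^{\core,m}a=a$, $bb^{\core,m}a^{\core,m}=a^{\core,m}$ (using $a^{\core,m}\mathcal{R}=a\mathcal{R}\subseteq b\mathcal{R}$), and $bb^{\core,m}b^{\core,m}=b^{\core,m}$.

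Next I would expand $(a+b)x$ directly. Using $ba^{\core,m}=aa^{\core,m}$, $ba^{\core,m}a=a$, and $aa^{\core,m}a=a$, the coefficients of the $ab^{\core,m}$-terms arrange themselves as $1-\tfrac{1}{2}-\tfrac{1}{2}=0$, so those terms cancel and the computation collapses to
\[
(a+b)x=-p+q+pq,\qquad p:=aa^{\core,m},\quad q:=bb^{\core,m}.
\]
A one-line calculation then yields $qp=bb^{\core,m}(ba^{\core,m})=ba^{\core,m}=p$.

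The delicate point---and the main obstacle---is producing the companion identity $pq=p$. There is no obvious direct route, because the containment $a\in b\mathcal{R}$ positions $aa^{\core,m}$ on the left of the range of $b$ but gives no a priori control on the right. I would use the adjoint trick: because $mp$ and $mq$ are Hermitian, $p^{*}=mpm^{-1}$ and $q^{*}=mqm^{-1}$, so taking $*$ of $qp=p$ yields $p^{*}q^{*}=p^{*}$, i.e.\ $(mpm^{-1})(mqm^{-1})=mpm^{-1}$, which simplifies immediately to $mpqm^{-1}=mpm^{-1}$, whence $pq=p$. Substituting back, $(a+b)x=-p+q+p=q=bb^{\core,m}$.

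With $(a+b)x=bb^{\core,m}$ in hand the three conditions fall into place: $m(a+b)x=mbb^{\core,m}$ is Hermitian by definition of $b^{\core,m}$; $(a+b)x(a+b)=bb^{\core,m}a+bb^{\core,m}b=a+b$ by Step~1; and, since $bb^{\core,m}$ fixes each of $a^{\core,m}$ and $b^{\core,m}$ and therefore every term of $x$, we get $bb^{\core,m}x=x$, so $(a+b)x^{2}=bb^{\core,m}x=x$. Lemma~\ref{lm:m-weighted-core-alt} then delivers $a+b\in\mathcal{R}^{\core,m}$ with $(a+b)^{\core,m}=x$. The hypothesis $2\in\mathcal{R}^{-1}$ plays no structural role beyond letting the coefficients $\tfrac{1}{2}$ in $x$ make sense.
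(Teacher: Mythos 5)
Your proof is correct, and its skeleton coincides with the paper's: you extract the same key identities $ba^{\core,m}=aa^{\core,m}$ and $bb^{\core,m}aa^{\core,m}=aa^{\core,m}=aa^{\core,m}bb^{\core,m}$ (your $qp=p$ and $pq=p$, the latter by exactly the same Hermitian-conjugation trick the paper uses to prove its equation for $aa^{\core,m}bb^{\core,m}$), and you arrive at the same pivotal simplification $(a+b)x=bb^{\core,m}$. Where you genuinely diverge is in how you close the argument. The paper certifies $x$ via the system $(3^m)$, $(6)$, $(7)$ of Theorem~\ref{core-equivalent}(iii), and the condition $(6)$, namely $x(a+b)^2=a+b$, forces it to expand $x(a^2+b^2+ab+ba)$ into sixteen terms and to pre-compute the whole battery of auxiliary identities \eqref{427}--\eqref{432}. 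You instead certify $x$ via the system $(1)$, $(3^m)$, $(7)$ of Lemma~\ref{lm:m-weighted-core-alt}, and once $(a+b)x=bb^{\core,m}$ is in hand both remaining conditions are one-liners: $bb^{\core,m}(a+b)=a+b$ because $a\in b\mathcal{R}$, and $bb^{\core,m}x=x$ because $bb^{\core,m}$ fixes $a^{\core,m}$ and $b^{\core,m}$ and hence every summand of $x$. This buys a markedly shorter and more transparent verification; the only cost is that you lean on Lemma~\ref{lm:m-weighted-core-alt}, whose published proof quietly invokes $a^{\#}$ before group invertibility has been established, so your argument inherits whatever care is needed there --- but that is a defect of the paper's lemma, not of your use of it. Your closing remark that $2\in\mathcal{R}^{-1}$ serves only to make the coefficients $\tfrac{1}{2}$ meaningful is also accurate.
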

\begin{proof}
Let $bb^{\core,m}a=a$. Then 
\begin{align}\label{424}
ab^{\core,m}=ba^{\core,m}ab^{\core,m},~~ba^{\core,m}=ba^{\core,m}aa^{\core,m}=aa^{\core,m}.
\end{align}
Using
$$
aa^{\core,m}=ba^{\core,m}aa^{\core,m}=bb^{\core,m}ba^{\core,m}aa^{\core,m}=bb^{\core,m}aa^{\core,m},
$$
we have
\begin{align*}
aa^{\core,m}&=m^{-1}\left(m aa^{\core,m} \right)^{*}=m^{-1}\left(mbb^{\core,m}aa^{\core,m} \right)
=m^{-1}\left(aa^{\core,m}\right)^{*}\left( mbb^{\core,m}\right)^{*}\\
&=m^{-1}\left(aa^{\core,m}\right)^{*}m bb^{\core,m}
=m^{-1}\left(aa^{\core,m}\right)^{*}m^{*} bb^{\core,m}
=m^{-1}\left(maa^{\core,m}\right)^{*} bb^{\core,m}\\
&=aa^{\core,m}bb^{\core,m}.
\end{align*}
Hence,
\begin{align}\label{425}
aa^{\core,m}=aa^{\core,m}bb^{\core,m}=bb^{\core,m}aa^{\core,m}.
\end{align}
Using the above relations, we obtain the following identities.
\begin{equation}\label{426}
bb^{\core,m}a^{\core,m}=bb^{\core,m}a\left(a^{\core,m}\right)^2=bb^{\core,m}ba^{\core,m}a\left(a^{\core,m}\right)^2=ba^{\core,m}a\left(a^{\core,m}\right)^2=a^{\core,m},
\end{equation}
\begin{equation}\label{427}
    a^{\core,m}bb^{\core,m}a^2=a^{\core,m}bb^{\core,m}ba^{\core,m}aa=a^{\core,m}ba^{\core,m}aa=a,
\end{equation}
\begin{center}
 $a^{\core,m}bb^{\core,m}ab=a^{\core,m}bb^{\core,m}ba^{\core,m}ab=a^{\core,m}ba^{\core,m}ab=a^{\core,m}ab$,  \end{center}
\begin{center}
   $a^{\core,m}ab^{\core,m}ba=a^{\core,m}ab^{\core,m}bba^{\core,m}a=a^{\core,m}a^2=a$, 
\end{center}
\begin{center}
 $b^{\core,m}ba=b^{\core,m}bba^{\core,m}a=ba^{\core,m}a=a,\quad b^{\core,m}a^2=b^{\core,m}ba^{\core,m}a^2=a$,   
\end{center}
\begin{center}
    $b^{\core,m}ab=b^{\core,m}ba^{\core,m}ab=b^{\core,m}ba\left(a^{\core,m}\right)^2ab=a^{\core,m}ab$,
\end{center}
\begin{equation}\label{432}
a^{\core,m}ab^{\core,m}ab=a^{\core,m}aa^{\core,m}ab=a^{\core,m}ab.
\end{equation}
Let $x=-\frac{1}{2}a^{\core,m}+b^{\core,m}+\frac{1}{2}a^{\core,m}bb^{\core,m}-\frac{1}{2}a^{\core,m}ab^{\core,m}.$
Using \eqref{424} and \eqref{425}, we obtain
\begin{align*}
(a+b)x&=(a+b)\left(-\dfrac{1}{2}a^{\core,m}+b^{\core,m}+\dfrac{1}{2}a^{\core,m}bb^{\core,m}-\dfrac{1}{2}a^{\core,m}ab^{\core,m} \right)\\
&=-\dfrac{1}{2}aa^{\core,m}+ab^{\core,m}+\dfrac{1}{2}aa^{\core,m}bb^{\core,m}-\dfrac{1}{2}ab^{\core,m}\\
&\quad -\dfrac{1}{2}ba^{\core,m}+bb^{\core,m}+\dfrac{1}{2}ba^{\core,m}bb^{\core,m}-\dfrac{1}{2}ba^{\core,m}ab^{\core,m}\\
&=-\dfrac{1}{2}aa^{\core,m}+\dfrac{1}{2}ab^{\core,m}+\dfrac{1}{2}aa^{\core,m}-\dfrac{1}{2}aa^{\core,m}+bb^{\core,m}
+\dfrac{1}{2}aa^{\core,m}bb^{\core,m}-\dfrac{1}{2}ab^{\core,m}\\
&=bb^{\core,m},
\end{align*}
which implies that
$$
(m(a+b)x)^{*}=(mbb^{\core,m})^{*}=mbb^{\core,m}=m(a+b)x.
$$
Using \eqref{426}, we have
\begin{align*}
(a+b)x^2&=bb^{\core,m}\left(-\dfrac{1}{2}a^{\core,m}+b^{\core,m}+\dfrac{1}{2}a^{\core,m}bb^{\core,m}-\dfrac{1}{2}a^{\core,m}ab^{\core,m} \right)\\
&=-\dfrac{1}{2}bb^{\core,m}a^{\core,m}+b^{\core,m}+\dfrac{1}{2}bb^{\core,m}a^{\core,m}bb^{\core,m}-\dfrac{1}{2}bb^{\core,m}a^{\core,m}ab^{\core,m} \\
& -\dfrac{1}{2}a^{\core,m}+b^{\core,m}+\dfrac{1}{2}a^{\core,m}bb^{\core,m}-\dfrac{1}{2}a^{\core,m}ab^{\core,m}\\
&=x.
\end{align*}
Using \eqref{427} and \eqref{432}, we obtain
\begin{align*}
x(a+b)^2&=\left(-\dfrac{1}{2}a^{\core,m}+b^{\core,m}+\dfrac{1}{2}a^{\core,m}bb^{\core,m}-\dfrac{1}{2}a^{\core,m}ab^{\core,m} \right)(a+b)^2\\
&=\left(-\dfrac{1}{2}a^{\core,m}+b^{\core,m}+\dfrac{1}{2}a^{\core,m}bb^{\core,m}-\dfrac{1}{2}a^{\core,m}ab^{\core,m} \right)(a^2+b^2+ab+ba)\\
&=-\dfrac{1}{2}a-\dfrac{1}{2}a^{\core,m}b^2-\dfrac{1}{2}a^{\core,m}ab-\dfrac{1}{2}a^{\core,m}ba+b^{\core,m}a^2+b+b^{\core,m}ab+b^{\core,m}ba\\
&\qquad+\dfrac{1}{2}a^{\core,m}bb^{\core,m}a^2+\dfrac{1}{2}a^{\core,m}b^2+\dfrac{1}{2}a^{\core,m}bb^{\core,m}ab+\dfrac{1}{2}a^{\core,m}bb^{\core,m}ba\\
&\qquad-\dfrac{1}{2}a^{\core,m}ab^{\core,m}a^2-\dfrac{1}{2}a^{\core,m}ab^{\core,m}b^2-\dfrac{1}{2}a^{\core,m}ab^{\core,m}ab-\dfrac{1}{2}a^{\core,m}ab^{\core,m}ba\\
&=-\dfrac{1}{2}a-\dfrac{1}{2}a^{\core,m}b^2-\dfrac{1}{2}a^{\core,m}ab--\dfrac{1}{2}a^{\core,m}ba+a+b+a^{\core,m}ab+a+\dfrac{1}{2}a+\dfrac{1}{2}a^{\core,m}b^2\\
&\qquad +\dfrac{1}{2}a^{\core,m}ab+\dfrac{1}{2}a^{\core,m}ba-\dfrac{1}{2}a-\dfrac{1}{2}a^{\core,m}ab-\dfrac{1}{2}a^{\core,m}ab-\dfrac{1}{2}a\\
&=a+b.
\end{align*}
Hence, by the Theorem \ref{core-equivalent}, we have
\begin{equation*}
(a+b)^{\core,m}=-\dfrac{1}{2}a^{\core,m}+b^{\core,m}+\dfrac{1}{2}a^{\core,m}bb^{\core,m}-\dfrac{1}{2}a^{\core,m}ab^{\core,m}.
\qedhere
\end{equation*}
\end{proof}
Next, we discus a characterization for difference of $m$-weighted core inverse of two elements in a ring.
\begin{theorem}
Let $a,b\in\mathcal{R}^{\core,m}$ and $m\in \mathcal{R}^{-1}$ be a hermitian element.
If $a=aa^{\core,m}b=ba^{\core,m}a,$ then $a-b\in \mathcal{R}^{\core,m}$  and
$$
(a-b)^{\core,m}= aa^{\core,m}b^{\core,m}-b^{\core,m}.
$$
\end{theorem}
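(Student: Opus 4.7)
Write $p := aa^{\core,m}$ and $q := bb^{\core,m}$, set $x := aa^{\core,m}b^{\core,m} - b^{\core,m} = (p-1)b^{\core,m}$, and aim to apply Lemma \ref{lm:m-weighted-core-alt} to the element $a-b$ with candidate inverse $x$; that is, I will check $(a-b)x(a-b) = a-b$, $(m(a-b)x)^{*} = m(a-b)x$, and $(a-b)x^{2} = x$.

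The first move is to squeeze a handful of identities out of the hypothesis $a = aa^{\core,m}b = ba^{\core,m}a$. Right-multiplying $a = ba^{\core,m}a$ by $a^{\core,m}$ and using $a^{\core,m}aa^{\core,m} = a^{\core,m}$ yields $ba^{\core,m} = p$. Right-multiplying this by $a$ and applying the axiom $a^{\core,m}a^{2} = a$ gives $ba = a^{2}$. Combining $a = ba^{\core,m}a$ with $bb^{\core,m}b = b$ produces $bb^{\core,m}a = a$, and right-multiplying by $a^{\core,m}$ gives $qp = p$. Taking involutions and using that both $mp$ and $mq$ are Hermitian (so $p^{*} = mpm^{-1}$ and $q^{*} = mqm^{-1}$) converts $qp = p$ into $pq = p$; hence $p$ and $q$ commute with $pq = qp = p$.

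With these identities the main computation collapses. Expanding $(a-b)x$ and using $ap = a^{2}a^{\core,m} = (ba)a^{\core,m} = bp$ to cancel the terms involving $pb^{\core,m}$ gives $(a-b)x = bb^{\core,m} - ab^{\core,m} = q - pq = q - p$. From this clean formula all three required identities follow in one line each: $m(a-b)x = mq - mp$ is a difference of Hermitian elements and hence Hermitian; $(q-p)(a-b) = qa - qb - pa + pb = a - b - a + a = a-b$, since $pa = qa = pb = a$ and $qb = b$; and
\[
(a-b)x^{2} = (q-p)(p-1)b^{\core,m} = (p-q)b^{\core,m} = pb^{\core,m} - b^{\core,m} = x,
\]
where the penultimate equality uses $qb^{\core,m} = b(b^{\core,m})^{2} = b^{\core,m}$.

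The only delicate point in the argument is extracting the commutativity $pq = qp = p$, for which the involution trick via $p^{*} = mpm^{-1}$ is essential; once that is in hand, the rest is a short string of cancellations, and Lemma \ref{lm:m-weighted-core-alt} then yields $(a-b)^{\core,m} = x = aa^{\core,m}b^{\core,m} - b^{\core,m}$.
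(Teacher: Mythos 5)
Your argument is correct and, at its core, travels the same road as the paper: you extract the same auxiliary identities from $a=aa^{\core,m}b=ba^{\core,m}a$ (in particular $ba^{\core,m}=aa^{\core,m}$, $bb^{\core,m}a=a$, and the commutation $aa^{\core,m}bb^{\core,m}=bb^{\core,m}aa^{\core,m}=aa^{\core,m}$ via the involution trick, which the paper imports from the proof of Theorem \ref{thm:sum-alternative}), you use the same candidate $x$, and you verify the same key identity $(a-b)x=bb^{\core,m}-aa^{\core,m}$ together with its $m$-symmetry and $(a-b)x^{2}=x$. Where you genuinely diverge is the final condition: the paper expands $x(a-b)^{2}=(aa^{\core,m}b^{\core,m}-b^{\core,m})(a^{2}+b^{2}-ab-ba)$ term by term to obtain $x(a-b)^{2}=a-b$ and concludes via Theorem \ref{core-equivalent}, whereas you check the inner-inverse identity $(a-b)x(a-b)=a-b$ (a one-liner once $(a-b)x=bb^{\core,m}-aa^{\core,m}$ is in hand) and conclude via Lemma \ref{lm:m-weighted-core-alt}. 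Your version is appreciably shorter, and the bookkeeping with the idempotents $p$ and $q$ is cleaner than the paper's. One caveat is worth flagging: Lemma \ref{lm:m-weighted-core-alt} is weaker currency than Theorem \ref{core-equivalent}(iii), since its proof in the paper tacitly assumes the element is already group invertible, and the conditions $axa=a$, $(max)^{*}=max$, $ax^{2}=x$ by themselves yield $a\in a^{2}\mathcal{R}$ but not $a\in\mathcal{R}a^{2}$; so a fully self-contained argument should still establish $x(a-b)^{2}=a-b$ (or otherwise that $a-b\in\mathcal{R}^{\#}$), which in your notation is only a short additional computation. Finally, a cosmetic slip: to deduce $ba=a^{2}$ from $ba^{\core,m}=aa^{\core,m}$ you must right-multiply by $a^{2}$ (so that $a^{\core,m}a^{2}=a$ applies), not by $a$.
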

\begin{proof}
Let $a=aa^{\core,m}b$. Then $ab^{\core,m}=aa^{\core,m}bb^{\core,m}$. From the proof of Theorem \ref{thm:sum-alternative}, we have
\begin{align*}
&aa^{\core,m}=aa^{\core,m}bb^{\core,m},~~b^{\core,m}ba=a=aa^{\core,m}b^{\core,m}ba=b^{\core,m}a^2,\\
&b^{\core,m}ab=a^{\core,m}ab=aa^{\core,m}b^{\core,m}ab,\\
&baa^{\core,m}b^{\core,m}=ba^{\core,m}a^2a^{\core,m}b^{\core,m}=a^2a^{\core,m}b^{\core,m}.
\end{align*}
Let $x=aa^{\core,m}b^{\core,m}-b^{\core,m}.$ Then
\begin{align*}
(a-b)x&=(a-b)\left(aa^{\core,m}b^{\core,m}-b^{\core,m}\right)\\
&=a^2a^{\core,m}b^{\core,m}-ab^{\core,m}-ba^{\core,m}b^{\core,m}+bb^{\core,m}\\
&=-aa^{\core,m}bb^{\core,m}+bb^{\core,m}\\
&=-aa^{\core,m}+bb^{\core,m},
\end{align*}
which shows that
\begin{align*}
\left(m(a-b)x \right)^*=-(maa^{\core,m})^*+(mbb^{\core,m})^*=-maa^{\core,m}+mbb^{\core,m}=m(a-b)x.
\end{align*}
Now,
\begin{align*}
(a-b)x^2&=(-aa^{\core,m}+bb^{\core,m})(aa^{\core,m}b^{\core,m}-b^{\core,m})\\
&=-aa^{\core,m}b^{\core,m}+aa^{\core,m}b^{\core,m}+bb^{\core,m}aa^{\core,m}b^{\core,m}-b^{\core,m}\\
&=aa^{\core,m}bb^{\core,m}b^{\core,m}-b^{\core,m}\\
&=aa^{\core,m}b^{\core,m}-b^{\core,m}=x.
\end{align*}
Again,
\begin{align*}
x(a-b)^2&=(aa^{\core,m}b^{\core,m}-b^{\core,m})(a^2+b^2-ab-ba)\\
&=aa^{\core,m}b^{\core,m}a^2+aa^{\core,m}b-aa^{\core,m}b^{\core,m}ab-aa^{\core,m}b^{\core,m}ba-b^{\core,m}a^2-b\\
&\qquad+b^{\core,m}ab+b^{\core,m}ba\\
&=a+aa^{\core,m}b-a^{\core,m}ab-a-a-b+a^{\core,m}ab+a\\
&=a-b.
\end{align*}
Finally, with the help of Theorem \ref{core-equivalent}, the required result can be established. 
\end{proof}

\section{Reverse order law}
In this this section, we will discuss the existence of reverse order order law for both $m$-weighted core and $n$-weighted dual core inverses. In addition, a few mixed-type reverse order law are discussed. The very first result contain two sufficient conditions for the reverse order law for weighted core inverse. 

\begin{theorem}\label{thm4.1}
Let $ a,b\in\mathcal{R}^{\core, m}$ and $m\in\mathcal{R}^{-1}$  with $m^*=m$. If
\begin{equation*}
    a^{\core,m}b=b^{\core,m}a~~~~\textnormal{and}~~~ aa^{\core,m}=ba^{\core,m},
\end{equation*}
 then
\begin{equation*}
\left(ab\right)^{\core,m}=b^{\core,m}a^{\core,m}=\left(a^{\core,m}\right)^2=(a^2)^{\core,m}.
\end{equation*}
\end{theorem}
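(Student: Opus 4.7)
The plan is to establish the chain of three equalities in the order that makes the identities cascade: first I would extract a single workhorse identity from the two hypotheses, then use it to handle the middle equality $b^{\core,m}a^{\core,m}=(a^{\core,m})^2$, then verify $(ab)^{\core,m}=(a^{\core,m})^2$ directly via Lemma \ref{lm:m-weighted-core-alt}, and finally pick up $(a^2)^{\core,m}=(a^{\core,m})^2$ from Theorem \ref{lm:a-n-core}.

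The workhorse identity will be
\begin{equation*}
b(a^{\core,m})^2 = a^{\core,m}. \tag{$\star$}
\end{equation*}
This follows by right-multiplying the hypothesis $aa^{\core,m}=ba^{\core,m}$ by $a^{\core,m}$ and invoking the identity $a(a^{\core,m})^2=a^{\core,m}$ from Theorem \ref{core-equivalent}. Once $(\star)$ is in hand, the middle equality is a short computation: rewrite $a^{\core,m}$ on the left as $a(a^{\core,m})^2$, swap $b^{\core,m}a$ for $a^{\core,m}b$ using the first hypothesis, and then apply $(\star)$ to the tail. Schematically,
\begin{equation*}
b^{\core,m}a^{\core,m} = b^{\core,m}a\,(a^{\core,m})^2 = a^{\core,m}b\,(a^{\core,m})^2 = a^{\core,m}\!\cdot\! a^{\core,m} = (a^{\core,m})^2.
\end{equation*}

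To identify $(ab)^{\core,m}$ with $(a^{\core,m})^2$, I would first observe the clean consequence
\begin{equation*}
(ab)(a^{\core,m})^2 = a\bigl(b(a^{\core,m})^2\bigr) = aa^{\core,m},
\end{equation*}
again by $(\star)$. With this in place, each of the three conditions of Lemma \ref{lm:m-weighted-core-alt} for $x=(a^{\core,m})^2$ and the element $ab$ becomes essentially a one-line check: condition $(ab)x(ab)=ab$ collapses to $aa^{\core,m}\cdot ab=ab$ and is immediate from $aa^{\core,m}a=a$; the Hermitian condition $(m(ab)x)^{\ast}=m(ab)x$ reduces to $(maa^{\core,m})^{\ast}=maa^{\core,m}$, which holds because $a\in\mathcal{R}^{\core,m}$; and the condition $(ab)x^2=x$ becomes $aa^{\core,m}(a^{\core,m})^2=(a^{\core,m})^2$, which holds because $aa^{\core,m}\cdot a^{\core,m}=a(a^{\core,m})^2\cdot a^{\core,m}\cdot\!\!$ reduces via $a(a^{\core,m})^2=a^{\core,m}$ to $a^{\core,m}\cdot a^{\core,m}$.

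Finally, since $a\in\mathcal{R}^{\core,m}$ implies $a\in\mathcal{R}^{\#}\cap\mathcal{R}^{(1,3^{m})}$ by Theorem \ref{core-equivalent}, Theorem \ref{lm:a-n-core} gives $(a^{2})^{\core,m}=(a^{\core,m})^{2}$ for free, closing the chain. I do not expect a genuine obstacle here: the only non-mechanical step is spotting $(\star)$, after which the hypothesis $a^{\core,m}b=b^{\core,m}a$ plays only the role of swapping factors in the middle equality, and the rest is bookkeeping against Lemma \ref{lm:m-weighted-core-alt} and Theorems \ref{core-equivalent} and \ref{lm:a-n-core}.
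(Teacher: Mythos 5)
Your proposal is correct and follows essentially the same route as the paper: establish $b^{\core,m}a^{\core,m}=(a^{\core,m})^2$ from the two hypotheses, verify via a three-condition characterization that this element is the $m$-weighted core inverse of $ab$, and obtain $(a^2)^{\core,m}=(a^{\core,m})^2$ from Theorem \ref{lm:a-n-core}. The only cosmetic difference is that you check the conditions of Lemma \ref{lm:m-weighted-core-alt} (using $(ab)x(ab)=ab$) while the paper checks Theorem \ref{core-equivalent}(iii) (using $x(ab)^2=ab$); your identity $(\star)$ is exactly the simplification the paper performs inline.
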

\begin{proof}
Let $a^{\core,m}b=b^{\core,m}a$. Then
\begin{center}
 $b^{\core,m}a^{\core,m}=(b^{\core,m}a)(a^{\core,m})^2
= a^{\core,m}ba^{\core,m}a^{\core,m}
= a^{\core,m}aa^{\core,m}a^{\core,m}
= (a^{\core,m})^2$.
\end{center}
From Lemma \ref{lm:a-n-core}, we get $b^{\core,m}a^{\core,m}= (a^{\core,m})^2
= (a^2)^{\core,m}$. Now we will verify that, $b^{\core,m}a^{\core,m}$ is the $m$-weighted core inverse of $ab$. Let $x=b^{\core,m}a^{\core,m}$. Then
\begin{eqnarray*}
  (mabx)^*&=&(mabb^{\core,m}a^{\core,m})^*=\left(ma(ba^{\core,m})a^{\core,m}\right)^*=(maaa^{\core,m}a^{\core,m})^*\\
  &=&(maa^{\core,m})^*=maa^{\core,m}=mabx,
\end{eqnarray*}
\begin{eqnarray*}
x(ab)^2&=&b^{\core,m}a^{\core,m}(ab)^2=(a^{\core,m})^2ab(a)b=(a^{\core,m})^2a(ba^{\core,m})a^2b\\
&=&(a^{\core,m})^2a^2a^{\core,m}a^2b=(a^{\core,m})^2a^3b=ab,\mbox{ and }
\end{eqnarray*}
\begin{eqnarray*}
abx^2&=&ab(b^{\core,m}a^{\core,m})^2=ab(a^{\core,m})^4=a(ba^{\core,m})(a^{\core,m})^3=a^2(a^{\core,m})^4=(a^{\core,m})^2\\
&=&b^{\core,m}a^{\core,m}=x.
\end{eqnarray*}
Therefore, $(ab)^{\core,m}=b^{\core,m}a^{\core,m}$.
\end{proof}

\begin{corollary}\label{thm44.1}
Let $ a,b\in\mathcal{R}^{\core, m}$. Assume that $m\in\mathcal{R}^{-1}$ is a hermitian element. If
$a^{\core,m}b=b^{\core,m}a$ and $aa^{\core,m}=ba^{\core,m}$, then
$\left(ab\right)^{\core,m}=b^{\core,m}a^{\core,m}$.
\end{corollary}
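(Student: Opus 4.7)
The corollary is a direct weakening of the conclusion of Theorem \ref{thm4.1}: under exactly the same hypotheses the theorem already establishes the chain of equalities
\[
(ab)^{\core,m} = b^{\core,m} a^{\core,m} = (a^{\core,m})^2 = (a^2)^{\core,m},
\]
so the plan is simply to quote Theorem \ref{thm4.1} and retain only the first equality. No separate argument is required.

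If one preferred a self-contained proof rather than a citation, the plan would be to repeat the three verifications used in Theorem \ref{thm4.1} with $x := b^{\core,m} a^{\core,m}$. First, I would use $a^{\core,m} b = b^{\core,m} a$ to collapse $b^{\core,m} a^{\core,m}$ to $(a^{\core,m})^2$; this is the key algebraic step that converts a product involving $b$ into a pure power of $a^{\core,m}$. Then I would use the second hypothesis $aa^{\core,m} = ba^{\core,m}$, which in particular yields $ba^{\core,m} = aa^{\core,m}$, to verify the three defining identities for the $m$-weighted core inverse of $ab$, namely $(m(ab)x)^* = m(ab)x$, $x(ab)^2 = ab$, and $(ab)x^2 = x$, and finally invoke Theorem \ref{core-equivalent}.

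The main (and only) obstacle in the self-contained route is bookkeeping: one must carefully apply $ba^{\core,m} = aa^{\core,m}$ at the right place inside each product so that the $b$-factors cancel and everything reduces to powers of $a$ and $a^{\core,m}$. But since this is precisely the computation already carried out in Theorem \ref{thm4.1}, the cleanest presentation is just the one-line deduction from that theorem.
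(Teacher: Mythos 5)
Your proposal is correct and matches the paper exactly: the corollary is stated with identical hypotheses to Theorem \ref{thm4.1} and the paper offers no separate proof, treating it as an immediate consequence of that theorem by dropping the extra equalities. Your one-line citation is precisely the intended argument.
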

The converse of the above corollary is not true in general as shown in the next example. 
\begin{example}\rm
Let $R=M_2(\mathbb{R})$, $a=\begin{pmatrix}
1  & 0\\
0 & 0
\end{pmatrix}$, $m=\begin{pmatrix}
1  & 0\\
0 & 2
\end{pmatrix}$ and  $b=\begin{pmatrix}
-1  & 1\\
0 & 0
\end{pmatrix}$. \\
Using Theorem \ref{core-equivalent} (iv), one can find $a^{\core,m}=\begin{pmatrix}
1 & 0\\
0 & 0
\end{pmatrix}$, $b^{\core,m}=\begin{pmatrix}
-1 & 0\\
0 & 0
\end{pmatrix}=(ab)^{\core,m}$.\\
Thus $(ab)^{\core,m}=b^{\core,m}a^{\core,m}$. However,
\begin{equation*}
a^{\core,m}b=\begin{pmatrix}
-1 & 1\\
0 & 0
\end{pmatrix}\neq \begin{pmatrix}
-1 & 0\\
0 & 0
\end{pmatrix}=b^{\core,m}a.
\end{equation*}
\end{example}

The following result can be verified for $n$-weighted dual core inverse. 
\begin{theorem}\label{thm4.2}
Let $n\in \mathcal{R}^{-1}$ be a hermitian element and $ a,b \in\mathcal{R}_{n,\core}$. If
\begin{equation*}
ab_{n,\core}=ba_{n,\core} ~~~~\textnormal{and}~~~~ b_{n,\core}b=b_{n,\core}a,
\end{equation*}
then
\begin{equation*}
(ab)_{n,\core}=b_{n,\core}a_{n,\core}=(b_{n,\core})^2=(b^2)_{n,\core}.
\end{equation*}
\end{theorem}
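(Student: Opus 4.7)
The plan is to mirror the proof of Theorem~\ref{thm4.1}, exploiting the duality between the $m$-weighted core inverse and the $n$-weighted dual core inverse. I would first establish $b_{n,\core}a_{n,\core} = (b_{n,\core})^2$. Expanding the leading $b_{n,\core}$ via the identity $b_{n,\core} = (b_{n,\core})^2 b$ (property $y^2 a = y$ from Theorem~\ref{n-core-equivalent}) and invoking the hypothesis $ab_{n,\core} = ba_{n,\core}$, I would rewrite $b_{n,\core}a_{n,\core}$ as $(b_{n,\core})^2 b \cdot a_{n,\core} = (b_{n,\core})^2 (ab_{n,\core}) = b_{n,\core}(b_{n,\core}a)b_{n,\core}$. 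Substituting the second hypothesis $b_{n,\core}a = b_{n,\core}b$ and collapsing via $b_{n,\core}bb_{n,\core} = b_{n,\core}$ then yields $(b_{n,\core})^2$. The equality $(b_{n,\core})^2 = (b^2)_{n,\core}$ would follow from the natural dual analog of Lemma~\ref{lm:a-n-core}, namely $(b^p)_{n,\core} = (b_{n,\core})^p$ for every $p \in \mathbb{N}$, which is proved by the same argument used for the $m$-weighted core case.

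To conclude, I would verify that $x := (b_{n,\core})^2$ is the $n$-weighted dual core inverse of $ab$ via characterization (iii) of Theorem~\ref{n-core-equivalent}, i.e., $(nx(ab))^* = nx(ab)$, $(ab)^2 x = ab$, and $x^2(ab) = x$. The crucial observation --- and the step I expect to be the main obstacle --- is that left-multiplying the hypothesis $b_{n,\core}a = b_{n,\core}b$ by $b^2$ and applying $b^2 b_{n,\core} = b$ yields the hidden identity $ba = b^2$. This immediately collapses $abab = a(ba)b = ab^3$, after which the three verifications reduce to short manipulations using $b^2 b_{n,\core} = b$, $(b_{n,\core})^2 b = b_{n,\core}$, and the hermiticity of $nb_{n,\core}b$: one finds $x(ab) = b_{n,\core}b$ (yielding the hermitian condition), $(ab)^2 x = ab^3(b_{n,\core})^2 = ab$ (applying $b^2 b_{n,\core} = b$ twice), and $x^2(ab) = b_{n,\core}\cdot (b_{n,\core})^2 b = (b_{n,\core})^2 = x$. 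Without the identity $ba = b^2$, the middle verification does not reduce cleanly, which is why spotting it is the key to the whole argument.
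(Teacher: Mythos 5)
Your proposal is correct and is precisely the dualization of the paper's proof of Theorem~\ref{thm4.1}, which is what the paper intends when it states Theorem~\ref{thm4.2} without proof ("can be verified for the $n$-weighted dual core inverse"). All steps check out, including the hidden identity $ba=b^2$, which is the exact dual of the identity $ba=a^2$ used implicitly in the verification $x(ab)^2=ab$ in the proof of Theorem~\ref{thm4.1}.
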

In view of the Theorem \ref{thm4.2} we can state the following result as a corollary. 
\begin{corollary}\label{colonn}
Let $n\in \mathcal{R}^{-1}$ be a hermitian element and $ a,b \in\mathcal{R}_{n,\core}$. If
\begin{equation*}
ab_{n,\core}=ba_{n,\core} ~~~~\textnormal{and}~~~~ b_{n,\core}b=b_{n,\core}a,
\end{equation*}
then
\begin{equation*}
(ab)_{n,\core}=b_{n,\core}a_{n,\core}.
\end{equation*}
\end{corollary}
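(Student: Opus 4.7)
The statement is an immediate consequence of Theorem \ref{thm4.2}: under the very same hypotheses that theorem already asserts the chain of equalities $(ab)_{n,\core}=b_{n,\core}a_{n,\core}=(b_{n,\core})^2=(b^2)_{n,\core}$, and Corollary \ref{colonn} merely records the first of these. So the proof I would write is a single line: the conclusion follows immediately from Theorem~\ref{thm4.2}.

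If one instead wants a self-contained derivation (equivalently, the proof of Theorem~\ref{thm4.2} restricted to the one equality of interest), the strategy is a direct mirror of the argument used for Theorem~\ref{thm4.1}, exploiting the duality between $\mathcal{R}^{\core,m}$ and $\mathcal{R}_{n,\core}$. Setting $y=b_{n,\core}a_{n,\core}$, I would first collapse $y$ to $(b_{n,\core})^2$ by the dual computation
\[
b_{n,\core}a_{n,\core}=(b_{n,\core})^2 b\cdot a_{n,\core}=(b_{n,\core})^2(ba_{n,\core})=(b_{n,\core})^2(ab_{n,\core})=b_{n,\core}(b_{n,\core}a)b_{n,\core}=b_{n,\core}(b_{n,\core}b)b_{n,\core}=(b_{n,\core})^2,
\]
where the steps use, in order, the identity $(b_{n,\core})^2 b=b_{n,\core}$ from Theorem~\ref{n-core-equivalent}(ii), the hypothesis $ab_{n,\core}=ba_{n,\core}$, the hypothesis $b_{n,\core}b=b_{n,\core}a$, and finally $(b_{n,\core})^2 b=b_{n,\core}$ once more.

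The remaining task is to verify that $y$ satisfies the three characterising conditions of Theorem~\ref{n-core-equivalent}(iii) for the element $ab$, namely $(ny(ab))^{\ast}=ny(ab)$, $(ab)^{2}y=ab$, and $y^{2}(ab)=y$. With the closed form $y=(b_{n,\core})^{2}$ in hand each of these reduces, after short substitutions using $ab_{n,\core}=ba_{n,\core}$, $b_{n,\core}b=b_{n,\core}a$ and the defining identities for $b_{n,\core}$, to a triviality, in direct analogy with the final displays in the proof of Theorem~\ref{thm4.1}. The only mild obstacle is book-keeping: because in the dual setting the shift identities $y^{2}a=y$ and $a^{2}y=a$ act on the right of $y$ rather than the left, one must take care to apply each hypothesis on the correct side when performing the substitutions.
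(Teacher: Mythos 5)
Correct, and it is essentially the paper's own approach: the paper states Corollary~\ref{colonn} as an immediate consequence of Theorem~\ref{thm4.2}, with no further argument, exactly as you do. Your supplementary self-contained sketch (collapsing $b_{n,\core}a_{n,\core}$ to $(b_{n,\core})^2$ via $(b_{n,\core})^2b=b_{n,\core}$ and the two hypotheses, then checking the three conditions of Theorem~\ref{n-core-equivalent}(iii) for $ab$) is also sound and is the expected dual of the proof of Theorem~\ref{thm4.1}.
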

The converse of the above corollary is not true in general as shown in the following example.
\begin{example}\rm
Let $R=M_2(\mathbb{R})$, $a=\begin{pmatrix}
1  & 0\\
-1 & 0
\end{pmatrix}$, $n=\begin{pmatrix}
1  & 0\\
0 & 2
\end{pmatrix}$ and  $b=\begin{pmatrix}
-1  & 0\\
1 & 0
\end{pmatrix}$. \\
Using Theorem \ref{n-core-equivalent} (iv), one can find $a_{n,\core}=\begin{pmatrix}
1 & 0\\
0 & 0
\end{pmatrix}$, $b_{n,\core}=\begin{pmatrix}
-1 & 0\\
0 & 0
\end{pmatrix}=(ab)_{n,\core}$.\\
Thus $(ab)^{\core,m}=b^{\core,m}a^{\core,m}$. However,
\begin{equation*}
b_{n,\core}b=\begin{pmatrix}
1 & 0\\
0 & 0
\end{pmatrix}\neq \begin{pmatrix}
-1 & 0\\
0 & 0
\end{pmatrix}=b_{n,\core,m}a.
\end{equation*}
\end{example}

A few necessary condition for the reverse order law  are established in the following theorems.  
\begin{theorem}\label{thm4.3}
Let $m\in\mathcal{R}^{-1}$ be a hermitian element and $ a,b \in \mathcal{R}^{\core, m}$. If
$(ab)^{\core,m}=b^{\core,m}a^{\core,m}$, then 
\begin{enumerate}[(i)]
   \item $ab=bb^{\core,m}ab=b^{\core,m}bab$;
    \item  $b^{\core,m}a \mathcal{R}\subseteq ab\mathcal{R} \subseteq ba\mathcal{R}$;
    \item  $bb^{\core, m}a^{\core, m}\in  c\{3^m,6\}$, where $c= abb^{\core, m}$.
\end{enumerate}

\end{theorem}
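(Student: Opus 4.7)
The plan is to exploit the defining right-ideal identity $(ab)^{\core,m}\mathcal{R}=ab\mathcal{R}$ from Definition~\ref{mcore}, together with the routine identities $b^{\core,m}bb^{\core,m}=b^{\core,m}$, $bb^{\core,m}b=b$, $b^{\core,m}b^{2}=b$, $a^{\core,m}a^{2}=a$ coming from Theorem~\ref{core-equivalent}, and the $m$-symmetry of $ab(ab)^{\core,m}$. The hypothesis $(ab)^{\core,m}=b^{\core,m}a^{\core,m}$ is the one extra ingredient that converts statements about $(ab)^{\core,m}$ into relations among $a$, $b$, $a^{\core,m}$, $b^{\core,m}$.

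For part (i), the clean observation is that the hypothesis forces $ab\in b\mathcal{R}$: since $b^{\core,m}\mathcal{R}=b\mathcal{R}$, one has
\[
ab\mathcal{R}=(ab)^{\core,m}\mathcal{R}=b^{\core,m}a^{\core,m}\mathcal{R}\subseteq b^{\core,m}\mathcal{R}=b\mathcal{R}.
\]
Writing $ab=bu$ for some $u\in\mathcal{R}$, both equalities fall out at once: $bb^{\core,m}\cdot ab=(bb^{\core,m}b)u=bu=ab$, and $b^{\core,m}b\cdot ab=(b^{\core,m}b^{2})u=bu=ab$.

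For part (ii), the inclusion $b^{\core,m}a\mathcal{R}\subseteq ab\mathcal{R}$ follows by picking $t\in\mathcal{R}$ with $b^{\core,m}a^{\core,m}=ab\cdot t$ and right-multiplying by $a^{2}$, which yields $b^{\core,m}a=(ab)(ta^{2})\in ab\mathcal{R}$ via $a^{\core,m}a^{2}=a$. For $ab\mathcal{R}\subseteq ba\mathcal{R}$, I left-multiply $b^{\core,m}a\in ab\mathcal{R}$ by $b$ to obtain $bb^{\core,m}a\in bab\mathcal{R}\subseteq ba\mathcal{R}$, right-multiply by $b$ to get $bb^{\core,m}ab\in ba\mathcal{R}$, and then invoke (i) to rewrite $bb^{\core,m}ab=ab$. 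For part (iii), set $x=bb^{\core,m}a^{\core,m}$ and $c=abb^{\core,m}$. The identity $b^{\core,m}bb^{\core,m}=b^{\core,m}$ collapses $cx$ to $ab(ab)^{\core,m}$, so $(mcx)^{*}=mcx$ is immediate from the weighted core inverse axioms. For $xc^{2}=c$, I compute $c^{2}=a(bb^{\core,m}ab)b^{\core,m}=a^{2}bb^{\core,m}$ using (i), then $xc^{2}=bb^{\core,m}(a^{\core,m}a^{2})bb^{\core,m}=bb^{\core,m}(abb^{\core,m})$; right-multiplying $ab=bb^{\core,m}ab$ from (i) by $b^{\core,m}$ gives $abb^{\core,m}=bb^{\core,m}(abb^{\core,m})$, so $xc^{2}=abb^{\core,m}=c$.

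The only step requiring genuine thought is the first equality in (i); a direct identity chase through the five defining relations of $(ab)^{\core,m}$ becomes unwieldy, whereas the short right-ideal argument above bypasses that. Once (i) is in hand, everything in (ii) and (iii) reduces to routine substitution.
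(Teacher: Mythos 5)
Your proof is correct, and it reaches the same three conclusions in the same order, but the execution of (i) and (ii) is genuinely different from the paper's. For (i) the paper runs a direct identity chase, $ab=(ab)^{\core,m}(ab)^2=b(b^{\core,m})^2a^{\core,m}(ab)^2=bb^{\core,m}(ab)^{\core,m}(ab)^2=bb^{\core,m}ab$, and analogously with $b^{\core,m}=b^{\core,m}bb^{\core,m}$ for the second equality; you instead invoke the range equality $z\mathcal{R}=a\mathcal{R}$ from Definition \ref{mcore} to get $ab\mathcal{R}=b^{\core,m}a^{\core,m}\mathcal{R}\subseteq b^{\core,m}\mathcal{R}=b\mathcal{R}$, write $ab=bu$, and read off both identities from $bb^{\core,m}b=b$ and $b^{\core,m}b^{2}=b$. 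The two arguments carry the same content (the paper's factorization exhibits your witness $u$ explicitly as $b^{\core,m}a^{\core,m}(ab)^{2}$), but yours makes the mechanism — the hypothesis forces $ab$ into the right ideal $b\mathcal{R}$ — more transparent. In (ii) the paper again manufactures explicit factorizations, $b^{\core,m}a=ab((ab)^{\core,m})^2a^2$ and $ab=bab((ab)^{\core,m})^2a^2b$, whereas you prove only the first inclusion by computation and then deduce $ab\mathcal{R}\subseteq ba\mathcal{R}$ by multiplying $b^{\core,m}a\in ab\mathcal{R}$ by $b$ on both sides and applying (i); that derivation of the second inclusion from the first is a different and tidier route. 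Part (iii) is essentially the paper's computation: both of you collapse $cx$ to $ab(ab)^{\core,m}$ via $b^{\core,m}bb^{\core,m}=b^{\core,m}$ to get the $\{3^m\}$ condition, and both use $ab=bb^{\core,m}ab$ (and its right-multiple $abb^{\core,m}=bb^{\core,m}abb^{\core,m}$) to reduce $xc^{2}$ to $c$. Every identity you rely on ($bb^{\core,m}b=b$, $b^{\core,m}b^{2}=b$, $a^{\core,m}a^{2}=a$, $b^{\core,m}bb^{\core,m}=b^{\core,m}$, and the range equalities) is supplied by Definition \ref{mcore} and Theorem \ref{core-equivalent}, so there is no gap.
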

\begin{proof}
\begin{enumerate}[(i)]
\item Let, $(ab)^{\core,m}=b^{\core,m}a^{\core,m}$. Then
\begin{align*}
ab&= (ab)^{\core,m}(ab)^2=b^{\core,m}a^{\core,m}(ab)^2=b(b^{\core,m})^2a^{\core,m}(ab)^2\\
&= bb^{\core,m}(b^{\core,m}a^{\core,m})(ab)^2= bb^{\core,m}(ab)^{\core,m}(ab)^2=bb^{\core,m}ab.
\end{align*}
and
\begin{align*}
ab&= (ab)^{\core,m}(ab)^2=b^{\core,m}b\left(b^{\core,m}a^{\core,m} (ab)^2\right)=b^{\core,m}bab.
\end{align*}

\item From (i), we obtain
\begin{eqnarray*}
ab=bb^{\core,m}ab = bb^{\core,m}a^{\core,m}a^2b= b(ab)^{\core,m}a^2b=bab((ab)^{\core,m})^2a^2b.
\end{eqnarray*}
Thus $ab\mathcal{R} \subseteq  ba\mathcal{R}$. Further from
\begin{center}
   $b^{\core,m}a=b^{\core,m}a^{\core,m}a^2=(ab)^{\core,m}a^2=ab((ab)^{\core,m})^2a^2$, we obtain
\end{center} $ \left(b^{\core,m}a\right)\mathcal{R}\subseteq (ab)\mathcal{R}$.

\item $ {b}  {b}^{\core, m}  {a}^{\core, m}\in {c}\{3^m\}$ is follows from the following
 \begin{eqnarray*}
 (mcbb^{\core, m}a^{\core, m})^*&=& (mabb^{\core, m}  {b}  {b}^{\core, m}  {a}^{\core, m})^*= (m{a}  {b}  {b}^{\core, m}  {a}^{\core, m})^*\\
 &=& (m{a}  {b} ( {a}  {b})^{\core, m})^*=m{a}  {b} ( {a}  {b})^{\core, m}= m{c}  {b}  {b}^{\core, m}  {a}^{\core, m}.
 \end{eqnarray*}

 Using $ {a}  {b}= {b}  {b}^{\core, m} {a}  {b}$ from part $(a)$, we obtain
 \begin{eqnarray*}
  {b} {b}^{\core,m} {a}^{\core,m} {c}^2&=&  {b} {b}^{\core,m} {a}^{\core,m} {a} ({b} {b}^{\core,m} {a} {b}){b}^{\core,m}\\
 &=& {b} {b}^{\core,m} {a}^{\core,m} {a}^2 {b} {b}^{\core,m}= {b} {b}^{\core,m} {a} {b} {b}^{\core,m}= {a} {b} {b}^{\core,m}.
 \end{eqnarray*}
 Hence $ {b} {b}^{\core,m} {a}^{\core,m}\in {c}\{6\}$ and completes the proof.
\qedhere
 \end{enumerate}
 \end{proof}
We now discuss a sufficient condition for $m$-weighted core inverse. 
\begin{theorem}
Let $ a,b \in \mathcal{R}^{\core, m}$ and $m\in\mathcal{R}^{-1}$ with $m=m^*$.  If $a^2=ba$, then
\begin{enumerate}[(i)]
\item  $ab\in\mathcal{R}^{\core,m}$   and  $(ab)^{\core,m}=b^{\core,m}a^{\core,m};$
\item $abb^{\core,m}\in\mathcal{R}^{\core,m}$ and $\left(abb^{\core,m}\right)^{\core,m}=bb^{\core,m}a^{\core,m}.$
\end{enumerate}
\end{theorem}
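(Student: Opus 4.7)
My plan is, in both parts, to verify the three hypotheses of Lemma~\ref{prop11} --- namely $zcz=z$, $(mcz)^{*}=mcz$, and $zc^{2}=c$ --- with the candidate $z$ and base element $c$ in each case.

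The preparatory step is to extract from $a^{2}=ba$ a short list of identities that collapse the products. Right-multiplying $a^{2}=ba$ by $a^{\#}$ and using $aa^{\#}a=a$ yields $a=baa^{\#}$, placing $a$ in $b\mathcal{R}$; then $bb^{\core,m}b=b$ immediately gives $bb^{\core,m}a=a$ and hence $bb^{\core,m}aa^{\#}=aa^{\#}$. From Theorem~\ref{core-equivalent}(v) we have $a^{\core,m}a=aa^{\#}$, and combining this with $a^{\core,m}aa^{\core,m}=a^{\core,m}$ gives $a^{\core,m}=aa^{\#}a^{\core,m}$, from which the crucial identity $bb^{\core,m}a^{\core,m}=a^{\core,m}$ drops out. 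Finally, $b^{\core,m}b=b^{\#}b$ is the identity on $b\mathcal{R}$; since $ab=baa^{\#}b\in b\mathcal{R}$, one has $b^{\core,m}b\cdot ab=ab$. I will also use the powerings $(ab)^{2}=a(ba)b=a^{3}b$ and $(abb^{\core,m})^{2}=a(bb^{\core,m}a)bb^{\core,m}=a^{2}bb^{\core,m}$.

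For part~(i) with $x=b^{\core,m}a^{\core,m}$, the condition $x(ab)^{2}=ab$ follows from $x(ab)^{2}=b^{\core,m}(a^{\core,m}a^{3})b=b^{\core,m}a^{2}b=b^{\core,m}b\cdot ab=ab$. The decisive simplification is $(ab)x=abb^{\core,m}a^{\core,m}=a(bb^{\core,m}a^{\core,m})=aa^{\core,m}$, which immediately yields Hermiticity $m(ab)x=maa^{\core,m}$ and $x(ab)x=x\cdot aa^{\core,m}=b^{\core,m}(a^{\core,m}aa^{\core,m})=b^{\core,m}a^{\core,m}=x$. For part~(ii) with $c=abb^{\core,m}$ and $y=bb^{\core,m}a^{\core,m}$, a parallel calculation gives $yc^{2}=bb^{\core,m}(a^{\core,m}a^{2})bb^{\core,m}=bb^{\core,m}a\cdot bb^{\core,m}=abb^{\core,m}=c$, and $cy=a(bb^{\core,m})^{2}a^{\core,m}=abb^{\core,m}a^{\core,m}=aa^{\core,m}$, so the remaining two conditions follow exactly as in part~(i).

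The only real obstacle is bookkeeping; there is no deep idea. The central observation that $bb^{\core,m}a^{\core,m}=a^{\core,m}$, and therefore that both $(ab)(b^{\core,m}a^{\core,m})$ and $(abb^{\core,m})(bb^{\core,m}a^{\core,m})$ collapse to the already-nice idempotent $aa^{\core,m}$, is what reduces the $*$-Hermiticity and the $zcz=z$ verifications in both parts to standard properties of $a^{\core,m}$ alone.
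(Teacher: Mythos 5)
Your argument is correct, and it follows the same overall strategy as the paper's proof: extract collapsing identities from $a^{2}=ba$ and then verify one of the three-equation characterizations of the $m$-weighted core inverse. The differences are real but modest. The paper checks the triple $(3^{m}),(6),(7)$ --- i.e.\ $(mabx)^{*}=mabx$, $x(ab)^{2}=ab$, $ab\,x^{2}=x$ --- using the identities $a=b^{\core,m}a^{2}$, $aa^{\core,m}=ba^{\core,m}$ and $a^{\core,m}=b^{\core,m}ba^{\core,m}$, whereas you check the triple $(2),(3^{m}),(6)$ of Lemma~\ref{prop11} and route everything through $bb^{\core,m}a=a$ and the single key identity $bb^{\core,m}a^{\core,m}=a^{\core,m}$, under which both $(ab)(b^{\core,m}a^{\core,m})$ and $(abb^{\core,m})(bb^{\core,m}a^{\core,m})$ collapse to $aa^{\core,m}$; this makes the Hermiticity and the $zcz=z$ verifications essentially free, and all of your intermediate computations (e.g.\ $a^{\core,m}a=aa^{\#}$ from Theorem~\ref{core-equivalent}(v) and (7), and $b^{\core,m}a^{2}b=b^{\core,m}b\cdot ab=ab$ since $a^{2}b=b(ab)$) check out. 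The more substantive divergence is in part (ii): the paper imports $bb^{\core,m}a^{\core,m}\in abb^{\core,m}\{3^{m},6\}$ from Theorem~\ref{thm4.3}(iii) applied to part (i) and adds one further computation, while your part (ii) is a self-contained rerun of part (i) with $c=abb^{\core,m}$, which is arguably cleaner and avoids the dependence on Theorem~\ref{thm4.3}. One caveat, which you inherit from the paper rather than introduce: the proof of Lemma~\ref{prop11} given in the paper silently uses the group inverse of the element being inverted, so a fully rigorous write-up should either note that $ab$ and $abb^{\core,m}$ are group invertible or fall back on the unconditional characterization $(3^{m}),(6),(7)$ of Theorem~\ref{core-equivalent}(iii); since you invoke Lemma~\ref{prop11} exactly as stated, this is not a gap in your argument relative to the paper's own standards.
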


\begin{proof}
\begin{enumerate}[(i)]
\item
Using $a^2=ba$ repetitively, we get
\begin{equation}\label{eqn4.1}
a=a^2a^{\#}=baa^{\#}=b^{\core,m}b^2aa^{\#}=b^{\core,m}ba^2a^{\#}=b^{\core,m}a^3a^{\#}=b^{\core,m}a^2,
\end{equation}
and
\begin{equation}\label{eqq4.2}
    aa^{\core,m}=a^2(a^{\core,m})^2=ba(a^{\core,m})^2=ba^{\core,m}.
\end{equation}
Applying Equation \eqref{eqn4.1}, we get
\begin{equation}\label{eqn4.2}
    a^{\core,m}=a(a^{\core,m})^2=b^{\core,m}a^2(a^{\core,m})^2=b^{\core,m}ba(a^{\core,m})^2=b^{\core,m}ba^{\core,m},
\end{equation}
 and
\begin{equation*}
b^{\core,m}a^{\core,m}(ab)^2=b^{\core,m}a^{\core,m}a(ba)b=b^{\core,m}a^{\core,m}a^3b=b^{\core,m}a^2b=ab.
\end{equation*}
From Equation \eqref{eqn4.2}, we have
\begin{eqnarray*}
\nonumber
   (mabb^{\core,m}a^{\core,m})^*&=&(mabb^{\core,m}b^{\core,m}ba^{\core,m})^*=(mab^{\core,m}ba^{\core,m})^*\\
   &=&(maa^{\core,m})^*=maa^{\core,m}=mabb^{\core,m}a^{\core,m}.
\end{eqnarray*}
Further, from Equations \eqref{eqn4.1} and \eqref{eqn4.2}, we obtain
\begin{eqnarray*}\label{eqn4.5}
\nonumber
ab(b^{\core,m}a^{\core,m})^2&=&abb^{\core,m}a^{\core,m}b^{\core,m}a^2(a^{\core,m})^3=abb^{\core,m}a^{\core,m}a(a^{\core,m})^3\\
\nonumber
&=&abb^{\core,m}(a^{\core,m})^3=abb^{\core,m}b^{\core,m}ba^{\core,m}(a^{\core,m})^2\\
\nonumber
&=&ab^{\core,m}ba^{\core,m}(a^{\core,m})^2=a(a^{\core,m})^3\\
&=&b^{\core,m}a^2(a^{\core,m})^3=b^{\core,m}a^{\core,m}.
\end{eqnarray*}
Thus using Lemma \ref{lm:m-weighted-core-alt}, we conclude that $b^{\core,m}a^{\core,m}$ is the $m$-weighted core inverse of $ab$,
which proves first part of the theorem.

\item Applying equation \eqref{eqq4.2} and \eqref{eqn4.2}, we have $a^{\core,m}=b^{\core,m}ba^{\core,m}$ and $ba^{\core,m}=aa^{\core,m}$, which shows that
$$
bb^{\core,m}a^{\core,m}=bb^{\core,m}\left(ba^{\core,m}\right)=b^{\core,m}aa^{\core,m}.
$$
Therefore,
\begin{align*}
bb^{\core,m}a^{\core,m}\left(abb^{\core,m}\right)^2
&=b^{\core,m}\left(aa^{\core,m}a\right)\left(bb^{\core,m}ab\right)b^{\core,m}
=b^{\core,m}aabb^{\core,m}=\left(b^{\core,m}bab\right)b^{\core,m}\\
&=abb^{\core,m},
\end{align*}
which shows that $bb^{\core,m}a^{\core,m} \in abb^{\core,m}\{7\}.$
Using part (i) of this theorem and part (iii) of Theorem \ref{thm4.3}, we have $bb^{\core,m}a^{\core,m} \in abb^{\core,m}\{3^m,6\}.$ Finally, with the help of Theorem \ref{core-equivalent}, we obtain the required result.
\qedhere
\end{enumerate}
\end{proof}

One can prove the following result for $n$-weighted dual core inverse. 

\begin{theorem}
Let $ a,b \in\mathcal{R}_{n,\core}$ and $n\in\mathcal{R}^{-1}$ with $n=n^*$. If $b^2=ba$, then
$ab\in \mathcal{R}_{n,\core}$  and $(ab)_{n,\core}=b_{n,\core}a_{n,\core}$.
\end{theorem}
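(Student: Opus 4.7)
The plan is to set $y := b_{n,\core} a_{n,\core}$ and verify that it satisfies the three conditions of Lemma~\ref{nprop11} with respect to $c = ab$, namely $y(ab)y = y$, $(ny(ab))^{*} = ny(ab)$, and $(ab)^{2} y = ab$. Once these are in place, Lemma~\ref{nprop11} simultaneously delivers $ab \in \mathcal{R}_{n,\core}$ and the desired formula $(ab)_{n,\core} = b_{n,\core} a_{n,\core}$. The approach parallels the proof of the companion $m$-weighted theorem (which uses $a^{2} = ba$), with the roles of the two sides dualized.

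The preparatory step is to compile a short dictionary of identities out of $b^{2} = ba$. Applying the dual-core axiom $(b_{n,\core})^{2} b = b_{n,\core}$ to the two expressions for $b^{2}$ gives $b_{n,\core} b = (b_{n,\core})^{2} b^{2} = (b_{n,\core})^{2} ba = b_{n,\core} a$, and $b^{2} b_{n,\core} = b$ combined with $b^{2}=ba$ gives $b = b a b_{n,\core}$. Since $b \in \mathcal{R}^{\#}$, the identity $b = b^{\#} b^{2} = (b^{\#} b) a = bb^{\#} a$ holds; and because $a^{\#} = a(a_{n,\core})^{2}$ forces $aa^{\#} = a a_{n,\core}$, right-multiplying $b = bb^{\#} a$ by $aa^{\#}$ produces $b = b a a_{n,\core}$, equivalently $b^{2} a_{n,\core} = b$. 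Substituting this into $b_{n,\core} = (b_{n,\core})^{2} b$ yields $b_{n,\core} = b_{n,\core} a a_{n,\core}$, and coupling with $b_{n,\core} a = b_{n,\core} b$ gives the key collapsing identity $b_{n,\core} = b_{n,\core} b a_{n,\core}$. Finally, $ba = b^{2}$ forces $bab = b^{3}$, so $(ab)^{2} = ab^{3}$ and $b^{3} b_{n,\core} = b \cdot (b^{2} b_{n,\core}) = b^{2}$.

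With this dictionary the three verifications reduce to short substitution chains. For $(ab)^{2} y$: $(ab)^{2} y = ab^{3} b_{n,\core} a_{n,\core} = a b^{2} a_{n,\core} = ab$. For $y(ab)y$: first rewrite $y = b_{n,\core} b (a_{n,\core})^{2}$ via $b_{n,\core} = b_{n,\core} b a_{n,\core}$, so that $y(ab) = b_{n,\core} b (a_{n,\core})^{2} ab = b_{n,\core} b \cdot a_{n,\core} b = b_{n,\core} a \cdot a_{n,\core} b = b_{n,\core} (a a_{n,\core}) b = b_{n,\core} b$; then $y(ab) y = (b_{n,\core} b)(b_{n,\core} a_{n,\core}) = (b_{n,\core} b b_{n,\core}) a_{n,\core} = b_{n,\core} a_{n,\core} = y$, where the last step uses $yay = y$ applied to the dual core inverse of $b$. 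For the Hermitian condition, the same collapse $y(ab) = b_{n,\core} b$ reduces it to $(n b_{n,\core} b)^{*} = n b_{n,\core} b$, which is part of the defining property of $b_{n,\core}$ in Theorem~\ref{n-core-equivalent}.

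I expect the main obstacle to lie in isolating the identity $b = b a a_{n,\core}$; it is precisely this identity that forces $y(ab)$ to reduce to the clean expression $b_{n,\core} b$, thereby eliminating the cross terms that would otherwise block the verification. Once that collapse is available, every subsequent manipulation is a direct application of the four dual-core axioms $aya=a$, $yay = y$, $(nya)^{*} = nya$, and $y^{2}a = y$ used on whichever of $a$ or $b$ is appropriate at each step, and the argument essentially writes itself.
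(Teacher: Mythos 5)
Your proof is correct: every identity in your dictionary checks out (in particular $b_{n,\core}b=b_{n,\core}a$, $b=baa_{n,\core}$, $b_{n,\core}=b_{n,\core}ba_{n,\core}$, and $(ab)^2=ab^3$ with $b^3b_{n,\core}=b^2$), and the three verifications for $y=b_{n,\core}a_{n,\core}$ follow. The paper omits the proof of this theorem entirely (it is stated as the dual analogue of the $m$-weighted result), and your argument is exactly the expected dualization of that companion proof; the only cosmetic difference is that you close via Lemma~\ref{nprop11} (verifying $y(ab)y=y$) rather than via the $\{4^n,8,9\}$-triple of Theorem~\ref{n-core-equivalent}(iii) (which would require $y^2(ab)=y$), both of which are characterizations the paper provides.
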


A few necessary and sufficient condition for $m$-weighted core inverse are presented below. 
\begin{theorem}
Let $m\in\mathcal{R}^{-1}$ be a hermitian element and $ a,b,ab\in \mathcal{R}^{\core, m}$ and $a^*mb\mathcal{R}=mba^*\mathcal{R}$.
Then
\begin{equation*}
(ab)^{\core,m}=b^{\core,m}a^{\core,m}
\end{equation*}
 if and only if
\begin{enumerate}[(i)]
    \item $(b^{\core,m}a)\mathcal{R}\subseteq (ab) \mathcal{R} \subseteq  (ba) \mathcal{R}$
    \item $mbb^{\core,m}aa^{\core,m}=maa^{\core,m}bb^{\core,m}$ or $bb^{\core,m}aa^{\core,m}=aa^{\core,m}bb^{\core,m}$
\end{enumerate}
\end{theorem}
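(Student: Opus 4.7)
The proof splits naturally along the biconditional: the forward direction leans on Theorem~\ref{thm4.3}, while the backward direction reduces to verifying the three defining identities of Lemma~\ref{lm:m-weighted-core-alt} applied to $ab$ and $x=b^{\core,m}a^{\core,m}$.

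For the $(\Rightarrow)$ direction, assume $(ab)^{\core,m}=b^{\core,m}a^{\core,m}$. Condition (i) is exactly Theorem~\ref{thm4.3}(ii). For (ii), observe first that its two formulations are equivalent since $m\in\mathcal{R}^{-1}$, and that the Hermiticity of $maa^{\core,m}$ and $mbb^{\core,m}$ forces $(maa^{\core,m}bb^{\core,m})^{*}=mbb^{\core,m}aa^{\core,m}$; thus (ii) is equivalent to $maa^{\core,m}bb^{\core,m}$ being Hermitian. The axiom $(mab(ab)^{\core,m})^{*}=mab(ab)^{\core,m}$ gives Hermiticity of $mabb^{\core,m}a^{\core,m}$, which I would transfer to $maa^{\core,m}bb^{\core,m}$ by expanding $(mabb^{\core,m}a^{\core,m})^{*}=(a^{\core,m})^{*}(b^{\core,m})^{*}b^{*}a^{*}m$, rewriting via the identities $(b^{\core,m})^{*}b^{*}=mbb^{\core,m}m^{-1}$ and $(a^{\core,m})^{*}a^{*}=maa^{\core,m}m^{-1}$, and using the hypothesis $a^{*}mb\mathcal{R}=mba^{*}\mathcal{R}$ (which produces $p,q\in\mathcal{R}$ with $a^{*}mb=mba^{*}p$ and $mba^{*}=a^{*}mbq$) to commute $a^{*}m$ past the factor $mbb^{\core,m}m^{-1}$.

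For the $(\Leftarrow)$ direction, assume (i), (ii), and the ideal condition, and set $x=b^{\core,m}a^{\core,m}$. Since $ab\in\mathcal{R}^{\core,m}\subseteq\mathcal{R}^{\#}$, Lemma~\ref{lm:m-weighted-core-alt} asks for (a) $(ab)x(ab)=ab$, (b) $(m(ab)x)^{*}=m(ab)x$, and (c) $(ab)x^{2}=x$. I would handle (a) first, combining the identities $a^{\core,m}a=aa^{\#}$ and $b^{\core,m}b=bb^{\#}$ (derivable from Theorem~\ref{core-equivalent}(v)), the commutation in (ii), and the inclusions in (i) to simplify $abb^{\core,m}a^{\core,m}ab$ to $ab$. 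Condition (c) then follows quickly: the inclusion $b^{\core,m}a\in(ab)\mathcal{R}$ from (i) gives $b^{\core,m}a=ab\alpha$ for some $\alpha\in\mathcal{R}$, hence $x=b^{\core,m}\cdot a(a^{\core,m})^{2}=ab\alpha(a^{\core,m})^{2}$ by $a(a^{\core,m})^{2}=a^{\core,m}$, and so $(ab)x^{2}=\bigl((ab)x(ab)\bigr)\alpha(a^{\core,m})^{2}=ab\alpha(a^{\core,m})^{2}=x$ once (a) is in hand.

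The main obstacle is verifying (b), which mirrors the delicate part of the forward direction. Both reductions pivot on the hypothesis $a^{*}mb\mathcal{R}=mba^{*}\mathcal{R}$, which functions as a \emph{weighted commutation} between $a^{*}$ and $b$ through the weight $m$. Threading this ideal relation through the expansion $(mabb^{\core,m}a^{\core,m})^{*}=(a^{\core,m})^{*}(b^{\core,m})^{*}b^{*}a^{*}m$, together with the Hermitian identities for $maa^{\core,m}$ and $mbb^{\core,m}$, is the delicate algebraic manoeuvre: I expect the hard part to be the careful bookkeeping of the factors of $m$ and $m^{-1}$ needed to align the elements $p,q$ supplied by the ideal hypothesis with the Hermitian identity we are trying to establish.
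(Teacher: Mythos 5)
Your architecture matches the paper's --- Theorem \ref{thm4.3} for condition (i) in the forward direction, and Lemma \ref{lm:m-weighted-core-alt} applied to $x=b^{\core,m}a^{\core,m}$ in the converse --- but the one step you explicitly defer is precisely the step the whole theorem turns on, and your substitute description of it would not work as stated. The hypothesis $a^*mb\mathcal{R}=mba^*\mathcal{R}$ does not let you ``commute $a^*m$ past $mbb^{\core,m}m^{-1}$''; what it actually yields, after applying the involution to $a^*mb=mba^*s$, is a factorization $b^*ma=uab^*m$ for some $u\in\mathcal{R}$, into which one inserts $b=bb^{\core,m}b$ to obtain $b^*ma=ua(mbb^{\core,m}b)^*=uab^*(mbb^{\core,m})^*=uab^*m\,bb^{\core,m}=b^*ma\,bb^{\core,m}$. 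This one-sided absorption identity $b^*ma=b^*ma\,bb^{\core,m}$ is what converts $mbb^{\core,m}aa^{\core,m}=(mbb^{\core,m})^*aa^{\core,m}=(b^{\core,m})^*b^*maa^{\core,m}$ into $(b^{\core,m})^*b^*mabb^{\core,m}a^{\core,m}=mbb^{\core,m}abb^{\core,m}a^{\core,m}=mabb^{\core,m}a^{\core,m}$ (using $ab=bb^{\core,m}ab$), thereby tying $mbb^{\core,m}aa^{\core,m}$ to the Hermitian element $mab(ab)^{\core,m}$ in the forward direction and producing the key identity $mbb^{\core,m}aa^{\core,m}=mabb^{\core,m}a^{\core,m}$ in the converse. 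Without deriving it, neither your Hermiticity transfer nor your condition (b) closes; ``careful bookkeeping of $m$ and $m^{-1}$'' will not rescue a commutation that the hypothesis does not supply.

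The gap also propagates into your step (a) of the converse: $abb^{\core,m}a^{\core,m}ab=ab$ does not follow from (i), (ii) and the identities $a^{\core,m}a=aa^{\#}$, $b^{\core,m}b=bb^{\#}$ alone, because the factor you must collapse is $abb^{\core,m}a^{\core,m}$, not $bb^{\core,m}aa^{\core,m}$, and (ii) only relates the latter to $aa^{\core,m}bb^{\core,m}$. The paper first proves $abb^{\core,m}a^{\core,m}=bb^{\core,m}aa^{\core,m}$ --- again via the absorption identity combined with $ab=bb^{\core,m}ab$ (which does come from (i)) --- and only then computes $abb^{\core,m}a^{\core,m}ab=bb^{\core,m}aa^{\core,m}ab=bb^{\core,m}ab=ab$. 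So the range hypothesis enters all three verifications, not just (b), and the concrete mechanism by which it enters is the absorption identity above: that is the missing idea in your proposal.
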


\begin{proof}
Let $(ab)^{\core,m}=b^{\core,m}a^{\core,m}$. Then the result (i) follows from Theorem \ref{thm4.3}. Further,
\begin{equation}\label{eq4.5}
    ab=b^{\core,m}a^{\core,m}(ab)^2=b(b^{\core,m})^2a^{\core,m}(ab)^2=bb^{\core,m}ab.
\end{equation}
Using the relation of $(a^*mb)\mathcal{R}=(mba^*)\mathcal{R}$, we obtain $b^*ma=uab^*m$ for some $u\in \mathcal{R}$.
In addition,
\begin{eqnarray}\label{eq4.6}
\nonumber
b^*ma&=&ua(mb)^*=ua(mbb^{\core,m}b)^*=uab^*(mbb^{\core,m})^*= uab^*mbb^{\core,m}\\
&=&b^*mabb^{\core,m}.
\end{eqnarray}
Using $(ab)\mathcal{R} \subseteq (ba)\mathcal{R}$, equation\eqref{eq4.5}, and equation \eqref{eq4.6}, we obtain
\begin{eqnarray}\label{eq4.60}
\nonumber
mbb^{\core,m}aa^{\core,m}&=& (mbb^{\core,m})^*aa^{\core,m}= (b^{\core,m})^*b^*maa^{\core,m}\\
\nonumber
&=& (b^{\core,m})^*b^*mabb^{\core,m}a^{\core,m}=(mbb^{\core,m})^*abb^{\core,m}a^{\core,m}\\
&=& mbb^{\core,m}abb^{\core,m}a^{\core,m}=mabb^{\core,m}a^{\core,m}\\
\nonumber
&=& (mabb^{\core,m}a^{\core,m})^*=(mbb^{\core,m}m^{-1}maa^{\core,m})^*\\
\nonumber
&=& (maa^{\core,m})^*m^{-1}(mbb^{\core,m})^*=maa^{\core,m}bb^{\core,m}.
\end{eqnarray}

 Conversely, let $({a}{b})\mathcal{R}\subseteq ({b}{a})\mathcal{R}$. This yields ${a}{b}={b}{a}{u}$ for some ${u}\in\mc$ and
 \begin{equation}\label{eqn4.7}
     {a}{b}={b}{a}{u}={b}{b}^{\core,m}{b}{a}{u}={b}{b}^{\core,m}{a}{b}.
 \end{equation}
 Using equation \eqref{eqn4.7} along with $ ({a}^*m{b})\mathcal{R}=(m{b}{a}^*)\mathcal{R}$, we can easily prove (like equation \eqref{eq4.60} )
 \begin{equation}\label{eq4.8}
   m{b}{b}^{\core,m}{a}{a}^{\core,m}=m{a}{b}{b}^{\core,m}{a}^{\core,m} \mbox{ or }  {b}{b}^{\core,m}{a}{a}^{\core,m}={a}{b}{b}^{\core,m}{a}^{\core,m}
 \end{equation}
 Combining $mbb^{\core,m}aa^{\core,m}=maa^{\core,m}bb^{\core,m}$ and equation\eqref{eq4.8}, we get
 \begin{center}
$(m{a}{b}{b}^{\core,m}{a}^{\core,m})^*=(m{b}{b}^{\core,m}{a}{a}^{\core,m})^*=ma{a}^{\core,m}{b}{b}^{\core,m}=ma{b}{b}^{\core,m}a^{\core,m}$ .
 \end{center}
 From $ ({b}^{\core}{a})\mathcal{R}\subseteq ({a}{b})\mathcal{R}$ and second part of equation \eqref{eq4.8}, we obtain
\begin{eqnarray*}
{a}{b}({b}^{\core,m}{a}^{\core,m})^2&=&{b}{b}^{\core,m}{a}{a}^{\core,m}{b}^{\core,m}{a}^{\core,m}={a}{a}^{\core,m}{b}{b}^{\core,m}{b}^{\core,m}{a}^{\core,m}\\
&=&{a}{a}^{\core,m}({b}^{\core,m}{a})({a}^{\core,m})^2={a}{a}^{\core,m}({a}{b}{v})({a}^{\core,m})^2\\
&=&({a}{b}{v})({a}^{\core,m})^2=({b}^{\core,m}{a})({a}^{\core,m})^2={b}^{\core,m}{a}^{\core,m}.
\end{eqnarray*}
Further,  ${a}{b}={b}{b}^{\core,m}{a}{b}={b}{b}^{\core,m}{a}{a}^{\core,m}{a}{b}={a}{b}{b}^{\core,m}{a}^{\core,m}{a}{b}$.
Thus by  Lemma \ref{lm:m-weighted-core-alt}, ${b}^{\core,m}{a}^{\core,m}$ is the $m$-weighted core inverse of $ab$.
\end{proof}

\begin{theorem}
Let $a,b,ab\in\mathcal{R}^{\core,m}$ and $m\in\mathcal{R}^{-1}$ be a hermitian element. Then 
\begin{equation*}
 (ab)^{\core,m}=b^{\core,m}a^{\core,m}    
\end{equation*}
if and only if 
\begin{equation*}
 b(ab)^{\core,m}=bb^{\core,m}a^{\core,m} \textnormal{~~and~~}
 abb^{\core,m}=b^{\core,m}babb^{\core,m}.
  \end{equation*}
\end{theorem}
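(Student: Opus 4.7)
The plan is to treat the two implications separately. The forward direction will follow almost immediately from Theorem~\ref{thm4.3}, whereas the converse will be obtained not by verifying the three defining axioms of an $m$-weighted core inverse from scratch, but by a short substitution argument that collapses $(ab)^{\core,m}$ into $b^{\core,m}a^{\core,m}$.

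For the forward direction, I would assume $(ab)^{\core,m}=b^{\core,m}a^{\core,m}$. The identity $b(ab)^{\core,m}=bb^{\core,m}a^{\core,m}$ is then immediate by left-multiplying by $b$. For the second identity, Theorem~\ref{thm4.3}(i) already yields $ab=b^{\core,m}bab$ under the reverse order law; right-multiplying this by $b^{\core,m}$ gives $abb^{\core,m}=b^{\core,m}babb^{\core,m}$, as required.

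For the converse I would first massage each hypothesis into a cleaner form. From the second hypothesis $abb^{\core,m}=b^{\core,m}babb^{\core,m}$, right-multiplying by $b$ and using $bb^{\core,m}b=b$ on both sides produces
\begin{equation*}
ab = b^{\core,m}bab.
\end{equation*}
From the first hypothesis $b(ab)^{\core,m}=bb^{\core,m}a^{\core,m}$, left-multiplying by $b^{\core,m}$ and using $b^{\core,m}bb^{\core,m}=b^{\core,m}$ gives
\begin{equation*}
b^{\core,m}b\,(ab)^{\core,m} = b^{\core,m}a^{\core,m}.
\end{equation*}

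The finisher is the $m$-weighted core inverse axiom $ax^2=x$ applied to $ab$, namely $(ab)\bigl((ab)^{\core,m}\bigr)^2=(ab)^{\core,m}$. Substituting $ab=b^{\core,m}bab$ in the leading factor gives
\begin{equation*}
(ab)^{\core,m} = (b^{\core,m}b\,ab)\bigl((ab)^{\core,m}\bigr)^2 = b^{\core,m}b\,(ab)^{\core,m} = b^{\core,m}a^{\core,m},
\end{equation*}
where the penultimate equality re-applies axiom $(7)$ and the last step uses the rewriting of the first hypothesis. The main subtlety is that neither hypothesis alone suffices: the first tells us what $b^{\core,m}b\,(ab)^{\core,m}$ equals, while the second is exactly what allows $b^{\core,m}b$ to be pulled out of $ab$ inside axiom $(7)$; recognising that the right-multiplication by $b$ in the second hypothesis produces the clean identity $ab=b^{\core,m}bab$ is the key observation that makes the whole argument sidestep any case-by-case verification.
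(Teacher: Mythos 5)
Your proposal is correct, and while the forward direction coincides with the paper's (the paper just cites Theorem~\ref{thm4.3} for necessity, and your two one-line multiplications are exactly the intended filling-in), your converse takes a genuinely different and shorter route. The paper proves sufficiency by verifying that $b^{\core,m}a^{\core,m}$ meets the characterization of Theorem~\ref{thm:m-core-eqv}: it checks $(ab)(b^{\core,m}a^{\core,m})(ab)=ab$ and $(mabb^{\core,m}a^{\core,m})^{*}=mabb^{\core,m}a^{\core,m}$ by rewriting $bb^{\core,m}a^{\core,m}$ as $b(ab)^{\core,m}$ via the first hypothesis, and then establishes the range inclusion $b^{\core,m}a^{\core,m}\mathcal{R}\subseteq ab\,\mathcal{R}$ through the chain
\begin{equation*}
b^{\core,m}a^{\core,m}=b^{\core,m}b(ab)^{\core,m}=b^{\core,m}bab\bigl((ab)^{\core,m}\bigr)^{2}=\bigl(b^{\core,m}babb^{\core,m}\bigr)b\bigl((ab)^{\core,m}\bigr)^{2}=abb^{\core,m}b\bigl((ab)^{\core,m}\bigr)^{2}.
\end{equation*}
You instead exploit the standing hypothesis $ab\in\mathcal{R}^{\core,m}$ head-on: since $(ab)^{\core,m}$ already exists and satisfies $(ab)\bigl((ab)^{\core,m}\bigr)^{2}=(ab)^{\core,m}$, the identity $ab=b^{\core,m}bab$ (correctly extracted from the second hypothesis by right-multiplying by $b$ and cancelling $bb^{\core,m}b=b$) collapses $(ab)^{\core,m}$ to $b^{\core,m}b(ab)^{\core,m}$, which the first hypothesis, after left-multiplication by $b^{\core,m}$ and $b^{\core,m}bb^{\core,m}=b^{\core,m}$, identifies with $b^{\core,m}a^{\core,m}$. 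Each step uses only the inner-inverse identities for $b$ and axiom $(7)$ for $ab$, so your argument dispenses entirely with the range-inclusion bookkeeping and the appeal to the characterization theorem; what it gives up is the explicit demonstration that $b^{\core,m}a^{\core,m}$ satisfies the defining properties of a core inverse, which the paper's verification makes visible. Neither argument is more general than the other, since both rely essentially on $ab\in\mathcal{R}^{\core,m}$ (indeed the stated hypotheses already presuppose it).
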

\begin{proof}
Necessary part of this theorem can be proved using Theorem \ref{thm4.3}. Now will prove the sufficient part. For this, let us assume that
$b(ab)^{\core,m}=bb^{\core,m}a^{\core,m}$ and  $abb^{\core,m}=b^{\core,m}babb^{\core,m}$. Then
$$
a\left(bb^{\core,m}a^{\core,m}\right)ab=ab(ab)^{\core,m}ab=ab \quad \mbox{ and }$$
$$(mabb^{\core,m}a^{\core,m})^*=(mab(ab)^{\core,m})^*=mab(ab)^{\core,m}=mabb^{\core,m}a^{\core,m}.
$$
This leads $b^{\core,m}a^{\core,m}\in ab\{1,3^m\}.$ Furthermore,
\begin{align*}
b^{\core,m}a^{\core,m}&=b^{\core,m}\left(bb^{\core,m}a^{\core,m}\right)=b^{\core,m}b(ab)^{\core,m}=b^{\core,m}bab\left((ab)^{\core,m} \right)^2\\
&=\left(b^{\core,m}babb^{\core,m}\right)b \left((ab)^{\core,m}\right)^2 = abb^{\core,m}b\left((ab)^{\core,m}\right)^2,
\end{align*}
which implies that $b^{\core,m}a^{\core,m}\mathcal{R}\subseteq ab\mathcal{R}.$ Finally, using Theorem \ref{thm:m-core-eqv}, we have
\begin{equation*}
(ab)^{\core,m}=b^{\core,m} a^{\core,m}.
\qedhere
\end{equation*}
\end{proof}

\begin{theorem}\label{th:cor-dedek}
Let $a,b\in \mathcal{R}^{\core,m}$ and $m\in\mathcal{R}^{-1}$ with $m=m^*$. Then the following statements are equivalent:
\begin{enumerate}[(i)]
\item $ab,ba\in \mathcal{R}^{\core,m}$ with $a^{\core,m}b^{\core,m}=(ba)^{\core,m},$ $b^{\core,m}a^{\core,m}=(ab)^{\core,m}.$ 
\item $baa^{\core,m},abb^{\core,m}\in\mathcal{R}^{\core,m}$ with $bb^{\core,m}a^{\core,m}=(abb^{\core,m})^{\core,m}.$
$aa^{\core,m}b^{\core,m}=\left(baa^{\core,m}\right)^{\core,m},$ $ba\mathcal{R}=(ba)^2\mathcal{R}$ and $ab\mathcal{R}=(ab)^2\mathcal{R}.$
\end{enumerate}
\end{theorem}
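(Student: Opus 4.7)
The plan is to prove the two implications separately, using Theorem \ref{core-equivalent}, Lemma \ref{lm:group-inv}, Lemma \ref{lm:m-weighted-core-alt}, and Theorem \ref{thm4.3} as the main tools.

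For the forward implication $(i)\Rightarrow(ii)$, the hypothesis $ab,ba\in\mathcal{R}^{\core,m}$ forces $ab,ba\in\mathcal{R}^{\#}$ via Theorem \ref{core-equivalent}(vi), and Lemma \ref{lm:group-inv} then yields the two range equalities $ab\mathcal{R}=(ab)^{2}\mathcal{R}$ and $ba\mathcal{R}=(ba)^{2}\mathcal{R}$ at once. To identify $(abb^{\core,m})^{\core,m}$ with $bb^{\core,m}a^{\core,m}$, I appeal to Theorem \ref{thm4.3}(iii), which already supplies the $\{3^{m},6\}$-inverse relations of $bb^{\core,m}a^{\core,m}$ with respect to $c=abb^{\core,m}$ under the assumption $(ab)^{\core,m}=b^{\core,m}a^{\core,m}$. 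What remains is the $(7)$-identity
\[
c(bb^{\core,m}a^{\core,m})^{2}=bb^{\core,m}a^{\core,m},
\]
after which the collection $\{(3^{m}),(6),(7)\}$ in Theorem \ref{core-equivalent}(iii) pins down $bb^{\core,m}a^{\core,m}$ as $c^{\core,m}$. This identity is a short calculation exploiting the idempotency of $bb^{\core,m}$ (a consequence of $b^{\core,m}bb^{\core,m}=b^{\core,m}$ from Theorem \ref{core-equivalent}(ii)) together with the $(7)$-identity $ab(b^{\core,m}a^{\core,m})^{2}=b^{\core,m}a^{\core,m}$ already available from the hypothesis on $ab$. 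The assertion for $baa^{\core,m}$ is produced by an entirely parallel argument with the roles of $a$ and $b$ swapped.

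For the reverse implication $(ii)\Rightarrow(i)$, I first establish $ab\in\mathcal{R}^{\#}$. The assumption $ab\mathcal{R}=(ab)^{2}\mathcal{R}$ gives $ab=(ab)^{2}s$ for some $s\in\mathcal{R}$; for the complementary left-sided factorization $ab=t(ab)^{2}$, I plan to combine the group invertibility of $abb^{\core,m}$ (granted by $(abb^{\core,m})^{\core,m}$ and Theorem \ref{core-equivalent}(vi)) with the $(6)$-identity $bb^{\core,m}a^{\core,m}(abb^{\core,m})^{2}=abb^{\core,m}$, right-multiplying by $b$ and absorbing the factor $b^{\core,m}b^{2}=b$ coming from Theorem \ref{core-equivalent}(ii). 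Lemma \ref{lm:group-inv} then delivers $ab\in\mathcal{R}^{\#}$. With group invertibility in hand, I verify the three defining identities of Theorem \ref{core-equivalent}(iii) for $b^{\core,m}a^{\core,m}$ at $ab$: the Hermitian identity $(m\cdot ab\cdot b^{\core,m}a^{\core,m})^{*}=m\cdot ab\cdot b^{\core,m}a^{\core,m}$ follows from the $(3^{m})$-identity for $(abb^{\core,m})^{\core,m}=bb^{\core,m}a^{\core,m}$ after rewriting $ab\cdot b^{\core,m}=abb^{\core,m}\cdot bb^{\core,m}$ via the idempotency of $bb^{\core,m}$, while the $(6)$- and $(7)$-identities at $ab$ transfer from the corresponding identities for $bb^{\core,m}a^{\core,m}$ at $abb^{\core,m}$ using the absorption rule $ab\cdot b^{\core,m}b=ab$ together with the range equation $ab=(ab)^{2}s$. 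The symmetric argument using the data on $baa^{\core,m}$ and $ba\mathcal{R}=(ba)^{2}\mathcal{R}$ yields $(ba)^{\core,m}=a^{\core,m}b^{\core,m}$.

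The main obstacle is this transfer step in the reverse direction: although $aa^{\core,m}$ and $bb^{\core,m}$ are idempotents enjoying the absorption rules $ab\cdot b^{\core,m}b=ab$ and $abb^{\core,m}\cdot bb^{\core,m}=abb^{\core,m}$, they do not commute with $a$ or $b$ in general, so the identities produced by Theorem \ref{core-equivalent} for $bb^{\core,m}a^{\core,m}$ at $abb^{\core,m}$ must be chained in exactly the right order against $ab=(ab)^{2}s$ to cancel the extraneous $b^{\core,m}b$ and $a^{\core,m}a$ factors without disturbing the targeted $(6)$- and $(7)$-identities.
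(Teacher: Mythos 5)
Your overall architecture coincides with the paper's (Theorem \ref{thm4.3} for the $\{3^m,6\}$ part of the forward direction plus a direct $(7)$-computation, and Lemma \ref{lm:group-inv} together with the equivalent characterizations for the converse), but two of the steps you describe do not go through with the ingredients you name. In the forward direction, the identity $abb^{\core,m}(bb^{\core,m}a^{\core,m})^2=bb^{\core,m}a^{\core,m}$ reduces, after absorbing $b^{\core,m}bb^{\core,m}=b^{\core,m}$, to $ab(ab)^{\core,m}\cdot b(ab)^{\core,m}=b(ab)^{\core,m}$, i.e.\ essentially to showing $b(ab)^{\core,m}\in ab\mathcal{R}$. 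Idempotency of $bb^{\core,m}$ and the $(7)$-identity for $ab$ alone do not deliver this; the paper gets it by writing $b(ab)^{\core,m}=bab\left((ab)^{\core,m}\right)^2$ and substituting $ba=abu$, where the inclusion $ba\mathcal{R}\subseteq ab\mathcal{R}$ comes from applying Theorem \ref{thm4.3}(ii) to the \emph{other} hypothesis $(ba)^{\core,m}=a^{\core,m}b^{\core,m}$. So the symmetric hypothesis on $ba$ is indispensable even for the $abb^{\core,m}$ conclusion, and your sketch omits it.

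In the reverse direction the gap is more serious. Right-multiplying $bb^{\core,m}a^{\core,m}(abb^{\core,m})^2=abb^{\core,m}$ by $b$ yields $ab=bb^{\core,m}a^{\core,m}\,abb^{\core,m}\,ab$, which has the useless form $ab=w\,(ab)$ rather than $ab=t(ab)^2$: the stray $b^{\core,m}$ sitting between the two copies of $ab$ cannot be removed by $b^{\core,m}b^2=b$ or by $ab=(ab)^2s$. The paper proceeds differently: from the $abb^{\core,m}$ data it derives $abb^{\core,m}=b^{\core,m}babb^{\core,m}$, hence $ab=b^{\core,m}bab$; from the $baa^{\core,m}$ data it derives $ba=a^{\core,m}aba$; and only by combining the two does it obtain $ab=b^{\core,m}(ba)b=b^{\core,m}a^{\core,m}(ab)^2$, which simultaneously gives $\mathcal{R}ab=\mathcal{R}(ab)^2$ (so that, with the assumed $ab\mathcal{R}=(ab)^2\mathcal{R}$ and Lemma \ref{lm:group-inv}, $ab\in\mathcal{R}^{\#}$) and the inclusion $ab\mathcal{R}\subseteq b^{\core,m}a^{\core,m}\mathcal{R}$ needed to invoke Theorem \ref{thm:m-core-eqv}(iii). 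Your plan to prove the $ab$-statement from the $abb^{\core,m}$ data alone, reserving the $baa^{\core,m}$ data for the symmetric $ba$-statement, therefore cannot be completed as written: each conclusion in (i) requires both halves of hypothesis (ii).
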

\begin{proof}
(i) $\Rightarrow$ (ii)\hspace{0.3cm} Clearly, $(ab)^2\mathcal{R}=ab\mathcal{R}$ and $(ba)^2\mathcal{R}=ba\mathcal{R}.$ 
Using symmetry, it is enough to
show that $aba^{\core,m}\in\mathcal{R}^{\core,m}$ with $\left(abb^{\core,m}\right)^{\core,m}=bb^{\core,m}a^{\core,m}.$ Using Theorem \ref{thm4.3}, we obtain $bb^{\core,m}a^{\core,m}\in abb^{\core,m}\{3^m,6\}$ and $ba\mathcal{R}=ab\mathcal{R}.$ From the range condition $ba\mathcal{R}=ab\mathcal{R}$, we have
 $ba=abt$ for some $t\in\mathcal{R}.$ Now
\begin{align*}
abb^{\core,m}\left(bb^{\core,m}a^{\core,m}\right)^2
&=abb^{\core,m}a^{\core,m}bb^{\core,m}a^{\core,m}=ab(ab)^{\core,m}b(ab)^{\core,m}\\
&=ab(ab)^{\core,m}bab\left((ab)^{\core,m}\right)^2 =\left(ab(ab)^{\core,m}ab \right) ub\left((ab)^{\core,m} \right)^2\\
&=abub\left((ab)^{\core,m}\right)^2=bab\left((ab)^{\core,m}\right)^2=b(ab)^{\core,m}\\
&=bb^{\core,m}a^{\core,m}.
\end{align*}
Using Theorem \ref{core-equivalent}, we obtain $abb^{\core,m}\in \mathcal{R}^{\core,m}.$
with $\left(abb^{\core,m}\right)^{\core,m}=bb^{\core,m}a^{\core,m}.$

\noindent(ii) $\Rightarrow$ (i)\hspace{0.3cm} From given hypothesis $(abb^{\core,m})^{\core,m}=bb^{\core,m}a^{\core,m},$ we obtain
\begin{align*}
b^{\core,m}a^{\core,m}abb^{\core,m}a^{\core,m}
=b^{\core,m}(bb^{\core,m}a^{\core,m}abb^{\core,m}bb^{\core,m}a^{\core,m})=b^{\core,m}bb^{\core,m}a^{\core,m}=b^{\core,m}a^{\core,m}
\end{align*}
and
$$
abb^{\core,m}a^{\core,m}=abb^{\core,m}bb^{\core,m}a^{\core,m}=abb^{\core,m}\left(abb^{\core,m}\right)^{\core,m},
$$
which yield $b^{\core,m}a^{\core,m}\in ab\{2,3^m\}$. Furthermore,  we have
\begin{equation}\label{eq333}
abb^{\core,m}=bb^{\core,m}a^{\core,m}\left( abb^{\core,m}\right)^2
=b^{\core,m}b\left(bb^{\core,m}a^{\core,m}\left( abb^{\core,m}\right)^2\right)=b^{\core,m}babb^{\core,m}.
\end{equation}
Using equation \eqref{eq333} $bb^{\core,m}b=b^{\core,m}babb^{\core,m}b$. Thus $ ab=b^{\core,m}bab$.
Similarly, we can derive that $a^{\core,m}aba=ba.$ Therefore,
$$
ab=b^{\core,m}(ba)b=b^{\core,m}a^{\core,m}(ab)^2,
$$
which implies $ab\mathcal{R}\subseteq b^{\core,m}a^{\core,m}\mathcal{R}$ and $\mathcal{R}ab=\mathcal{R}(ab)^2.$
It can be noted from Lemma \ref{lm:group-inv} and $ab\mathcal{R}=(ab)^2\mathcal{R},$ that $ab\in \mathcal{R}^{\#}.$
Further, using Theorem \ref{thm:m-core-eqv}, we claim that $ab\in \mathcal{R}^{\core,m}$ with $(ab)^{\core,m}=b^{\core,m}a^{\core,m}.$ Similarly, we can  prove $ba\in \mathcal{R}^{\core,m}$  and  $(ba)^{\core,m}=a^{\core,m}b^{\core,m}$.
\end{proof}
We now present one result based on Dedekind-finite ring. 
\begin{corollary}
Suppose that $\mathcal{R}$ is a Dedekind-finite ring. If $m\in\mathcal{R}^{-1}$ is a hermitian element and $a,b\in\mathcal{R}^{\core,m},$ 
then the following  are equivalent. 
\begin{enumerate}[(i)]
    \item  $ba,ab\in \mathcal{R}^{\core,m}$ and $b^{\core,m}a^{\core,m}=(ab)^{\core,m},$ $a^{\core,m}b^{\core,m}=(ba)^{\core,m}.$
\item $baa^{\core,m},abb^{\core,m}\in\mathcal{R}^{\core,m}$ and $\left(baa^{\core,m}\right)^{\core,m}=aa^{\core,m}b^{\core,m},$ 
    $\left(abb^{\core,m}\right)^{\core,m}=bb^{\core,m}a^{\core,m}.$
\end{enumerate}
\end{corollary}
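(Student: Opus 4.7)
The plan is to derive the corollary from Theorem~\ref{th:cor-dedek} by showing that in a Dedekind-finite ring, the range conditions $ab\mathcal{R}=(ab)^2\mathcal{R}$ and $ba\mathcal{R}=(ba)^2\mathcal{R}$ appearing in statement (ii) of that theorem (but absent from the corollary's (ii)) follow automatically from the remaining hypotheses. The direction (i) $\Rightarrow$ (ii) is immediate from Theorem~\ref{th:cor-dedek}(i) $\Rightarrow$ (ii) by simply discarding those extra conclusions.

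For (ii) $\Rightarrow$ (i), my plan is to verify the two range conditions and then invoke Theorem~\ref{th:cor-dedek}(ii) $\Rightarrow$ (i). By the $a\leftrightarrow b$ symmetry it suffices to show $ab\in(ab)^2\mathcal{R}$. Repeating the opening derivations from the proof of Theorem~\ref{th:cor-dedek}(ii) $\Rightarrow$ (i) using only the hypothesis $(abb^{\core,m})^{\core,m}=bb^{\core,m}a^{\core,m}$, one obtains that $y := b^{\core,m}a^{\core,m}\in ab\{2,3^m\}$ and $ab = y(ab)^2$. Right-multiplying the $\{1\}$-inverse identity $abb^{\core,m}\cdot bb^{\core,m}a^{\core,m}\cdot abb^{\core,m} = abb^{\core,m}$ by $b$ and using $bb^{\core,m}b=b$ then yields $(ab)y(ab)=ab$, so $y$ is in fact a $\{1,2,3^m\}$-inverse of $ab$.

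The Dedekind-finite hypothesis is deployed via a corner-ring trick. Set $Q := y(ab)$; from $y(ab)y=y$ one checks $Q^2=Q$, from $y(ab)^2=ab$ one has $Q(ab)=ab$, and from $(ab)y(ab)=ab$ one has $(ab)Q=ab$. Put $y_0 := y^2(ab)$; using $Qy = y(ab)y = y$ and the identities above, one verifies that $y_0 \in Q\mathcal{R}Q$ and that $y_0 \cdot ab = y\cdot y(ab)^2 = y(ab) = Q$, so $y_0$ is a left inverse of $ab$ in the corner ring $Q\mathcal{R}Q$. Since Dedekind-finiteness passes from $\mathcal{R}$ to the corner $Q\mathcal{R}Q$, the one-sided inverse $y_0$ is forced to be two-sided there, giving $(ab)y_0=y_0(ab)=Q$ in $\mathcal{R}$. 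Unwinding, $(ab)y_0(ab)=ab$, $y_0(ab)y_0=y_0$, and $(ab)y_0=y_0(ab)$, so $y_0=(ab)^{\#}$ and $ab\in\mathcal{R}^{\#}$. Lemma~\ref{lm:group-inv} now supplies $ab\in(ab)^2\mathcal{R}$; a symmetric argument handles $ba\in(ba)^2\mathcal{R}$, after which Theorem~\ref{th:cor-dedek}(ii) $\Rightarrow$ (i) delivers (i).

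The main delicacy is the corner-ring step, where the Dedekind-finite hypothesis of $\mathcal{R}$ is transferred to $Q\mathcal{R}Q$ and used to promote one-sided invertibility of $ab$ to two-sided; once the idempotent $Q$ is correctly chosen so that $Q(ab)Q = ab$, everything else is routine bookkeeping tracking identities already established in the proof of Theorem~\ref{th:cor-dedek}.
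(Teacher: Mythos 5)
Your proposal is correct, and its overall strategy coincides with the paper's: both reduce the corollary to Theorem~\ref{th:cor-dedek} by producing the two range conditions $ab\mathcal{R}=(ab)^2\mathcal{R}$ and $ba\mathcal{R}=(ba)^2\mathcal{R}$ that the corollary's item (ii) omits. The difference is in how much of the Dedekind-finite step is actually carried out. The paper only proves that $ab$ and $ba$ are \emph{regular} (via the identity $ab=abb^{\core,m}\bigl(bb^{\core,m}a^{\core,m}\bigr)^{2}a\,ab$) and then stops, asserting that regularity is ``enough'' in view of the theorem; the passage from regularity plus $ab\in\mathcal{R}(ab)^2$ to the right-sided condition $ab\in(ab)^2\mathcal{R}$ --- which is precisely where Dedekind-finiteness must enter --- is never written down. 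You supply exactly that missing piece: from $y=b^{\core,m}a^{\core,m}\in ab\{1,2,3^m\}$ and $ab=y(ab)^2$ you form the idempotent $Q=y(ab)$, check $Q(ab)Q=ab$ and $y_0(ab)=Q$ for $y_0=y^2(ab)\in Q\mathcal{R}Q$, transfer Dedekind-finiteness to the corner ring $Q\mathcal{R}Q$ (a standard fact, easily verified by padding with $1-Q$), and conclude $ab\in\mathcal{R}^{\#}$, whence Lemma~\ref{lm:group-inv} gives the range condition. One small inaccuracy: the identity $ab=y(ab)^2$ in the theorem's proof uses \emph{both} hypotheses of (ii) (one gives $ab=b^{\core,m}bab$, the symmetric one gives $ba=a^{\core,m}aba$), not only $(abb^{\core,m})^{\core,m}=bb^{\core,m}a^{\core,m}$ as you state; since both are available in the corollary this is harmless, but the attribution should be corrected. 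On balance your write-up is more complete than the paper's, which never visibly invokes the Dedekind-finite hypothesis at all.
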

\begin{proof}
In the view of Theorem \ref{th:cor-dedek}, it is enough to show that $ba$ and $ab$ are regular. 
Let us assume that $bb^{\core,m}a^{\core,m}=\left(abb^{\core,m}\right)^{\core,m}.$ Therefore,
\begin{align*}
bb^{\core,m}abb^{\core,m}=bb^{\core,m}\left(bb^{\core,m}a^{\core,m}\left(abb^{\core,m}\right)^2\right)
=bb^{\core,m}a^{\core,m}\left(abb^{\core,m}\right)^2=abb^{\core,m}.
\end{align*}
This implies 
\begin{align*}
ab=\left(abb^{\core,m} \right)b=bb^{\core,m}abb^{\core,m}b=bb^{\core,m}ab=\left(bb^{\core,m}a^{\core,m}\right)a^2b
=abb^{\core,m}\left(bb^{\core,m}a^{\core,m} \right)^2aab. 
\end{align*}
Hence, $ab$ is regular. Using the hypothesis $aa^{\core,m}b^{\core,m}=\left(baa^{\core,m}\right)^{\core,m}$, one can derive that 
$ba$ is regular in a similar way. 
\end{proof}
To prove an equivalent characterization for mixed-type reverse order law, we use the Cline's formula \cite{Cline-1965}, as follows. 

\begin{lemma}\cite{Cline-1965}\label{lm:Darzin-inv}
Let $x,y\in\mathcal{R}.$ If $xy$ is Darzin invertible, then $yx$ is also Darzin invertible and $(yx)^D=y\left((xy)^D\right)^2x.$
\end{lemma}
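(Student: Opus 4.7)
The plan is to verify the three defining axioms of a Drazin inverse for the proposed element
\[
z := y\bigl((xy)^D\bigr)^{2} x
\]
with respect to $yx$. Writing $a := xy$ and letting $a^D$ denote its Drazin inverse, the whole argument rests on the elementary identity $(yx)^n = y(xy)^{n-1} x$ for $n \geq 1$, which allows every occurrence of a factor or power of $yx$ (even one nested inside a product) to be rewritten as a power of $a$ sandwiched between $y$ and $x$.

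First, I would verify the commutativity axiom. Using $xy = a$ one computes $(yx)z = y(xy)(a^D)^2 x = y a (a^D)^2 x$, and because $a a^D = a^D a$ together with $a^D a a^D = a^D$ gives $a(a^D)^2 = a^D$, this collapses to $y a^D x$. A symmetric manipulation produces $z(yx) = y(a^D)^2 a x = y a^D x$, so $(yx)z = z(yx)$. The inner-inverse axiom follows immediately: $z(yx)z = (y a^D x)(y(a^D)^2 x) = y a^D a (a^D)^2 x = y(a^D)^2 x = z$.

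For the index axiom I would exploit $(yx)^{k+2} = y a^{k+1} x$ and write
\[
z(yx)^{k+2} \;=\; y(a^D)^{2} x \cdot y a^{k+1} x \;=\; y(a^D)^{2} a^{k+2} x.
\]
Reducing $(a^D)^{2} a^{k+2} = a^D(a^D a^{k+1})a = a^D\, a^k\, a = a^D a^{k+1} = a^k$ (via the defining property $a^D a^{k+1} = a^k$ applied twice), I conclude $z(yx)^{k+2} = y a^k x = (yx)^{k+1}$. This is precisely the Drazin index condition with parameter $k+1$, so $yx$ is Drazin invertible with $(yx)^D = z$.

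The only delicate point, and the place where I expect the most bookkeeping, is that in a non-commutative ring one may only commute $a$ with $a^D$, never $a^D$ with the external factors $x$ or $y$. Consequently every simplification must first collapse an adjacent $xy$ into $a$ by Cline's identity, and only then may the Drazin relations among $a$ and $a^D$ be applied. Once this discipline is observed, no further ideas beyond the three short calculations above are needed.
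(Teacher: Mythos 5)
Your verification is correct: each of the three computations is valid, and the key identities $a(a^D)^2=(a^D)^2a=a^D$ and $(a^D)^2a^{k+2}=a^k$ are applied legitimately, yielding that $y((xy)^D)^2x$ satisfies the Drazin axioms for $yx$ with index at most $k+1$. The paper does not prove this lemma at all --- it is quoted from Cline's original work --- and your argument is the standard direct verification, so there is nothing to reconcile.
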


\begin{theorem}
Let $m\in\mathcal{R}^{-1}$ be a hermitian element and $a,b\in \mathcal{R}^{\core,m}.$ Then the following are equivalent:
\begin{enumerate}[(i)]
  \item $a^{\core,m}ab=baa^{\core,m},$ $b^{\core,m}ba=abb^{\core,m}$ and $aa^{\core,m}b^{\core,m}=b^{\core,m}a^{\core,m}a.$
   \item $ab,ba\in \mathcal{R}^{\#}$ with $(ab)^{\#}=b^{\core,m}a^{\core,m},$ $(ba)^{\#}=a^{\core,m}b^{\core,m}$.
\end{enumerate}
\end{theorem}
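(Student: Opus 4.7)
The plan is to prove the two implications by direct verification, using the characterization of the group inverse ($aya=a$, $yay=y$, $ay=ya$) together with the standard $m$-weighted core-inverse identities from Theorem~\ref{core-equivalent} and the derived identity $aa^{\#} = a^{\#}a = a^{\core,m}a$, which follows from $a^{\#} = (a^{\core,m})^{2}a$ and $a(a^{\core,m})^{2} = a^{\core,m}$.

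For the implication (i)~$\Rightarrow$~(ii), I would set $x = b^{\core,m}a^{\core,m}$ and verify $x = (ab)^{\#}$. Using the first hypothesis $a^{\core,m}ab = baa^{\core,m}$, rewrite
\[
x(ab) = b^{\core,m}(a^{\core,m}ab) = b^{\core,m}baa^{\core,m};
\]
using the second hypothesis $b^{\core,m}ba = abb^{\core,m}$, rewrite
\[
(ab)x = (abb^{\core,m})a^{\core,m} = b^{\core,m}baa^{\core,m},
\]
which yields $(ab)x = x(ab)$ at once. The absorption $(ab)x(ab) = ab$ then reduces via $bb^{\core,m}b = b$ to $abaa^{\core,m}$, which equals $ab$ by premultiplying the first hypothesis by $a$ and invoking $aa^{\core,m}a = a$. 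The outer condition $x(ab)x = x$ collapses using the third hypothesis $aa^{\core,m}b^{\core,m} = b^{\core,m}a^{\core,m}a$, together with $a^{\core,m}aa^{\core,m} = a^{\core,m}$ and $b^{\core,m}bb^{\core,m} = b^{\core,m}$. The claim $(ba)^{\#} = a^{\core,m}b^{\core,m}$ then follows by the symmetric argument obtained by swapping $a \leftrightarrow b$ in the three hypotheses.

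For (ii)~$\Rightarrow$~(i), the group-inverse characterization gives $abb^{\core,m}a^{\core,m} = b^{\core,m}a^{\core,m}ab$ and $baa^{\core,m}b^{\core,m} = a^{\core,m}b^{\core,m}ba$, along with the absorption equalities $abb^{\core,m}a^{\core,m}ab = ab$ and $baa^{\core,m}b^{\core,m}ba = ba$. The strategy is to pre- and post-multiply these relations by $a$, $b$, $a^{\#}$, $b^{\#}$, $a^{\core,m}$, $b^{\core,m}$, repeatedly invoking $a^{\core,m}a^{2} = a$, $a(a^{\core,m})^{2} = a^{\core,m}$, $aa^{\core,m}a = a$, and $aa^{\#} = a^{\core,m}a$, to peel the two-sided equations down to the one-sided identities of (i). Cline's formula (Lemma~\ref{lm:Darzin-inv}) provides extra leverage: $(ba)^{\#} = b((ab)^{\#})^{2}a$ combined with $(ba)^{\#} = a^{\core,m}b^{\core,m}$ yields the subsidiary identity $bb^{\core,m}a^{\core,m}b^{\core,m}a^{\core,m}a = a^{\core,m}b^{\core,m}$, and the symmetric application gives $aa^{\core,m}b^{\core,m}a^{\core,m}b^{\core,m}b = b^{\core,m}a^{\core,m}$; these two identities are the main bridge to recovering the third hypothesis $aa^{\core,m}b^{\core,m} = b^{\core,m}a^{\core,m}a$. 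The main obstacle lies precisely here: the group-inverse hypotheses only assert a two-sided commutation $(ab)(ab)^{\#} = (ab)^{\#}(ab)$, whereas (i) demands genuinely one-sided identities, and since neither $a^{\core,m}$ nor $b^{\core,m}$ is two-sided invertible, naïve cancellation fails. The delicate part is choosing the right multiplier at each step so that the idempotents $aa^{\core,m}$ and $b^{\core,m}b$ collapse in the desired direction.
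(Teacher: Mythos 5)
Your (i)~$\Rightarrow$~(ii) direction is essentially the paper's own argument and is correct: the same three verifications ($x(ab)=(ab)x$ from hypotheses 1 and 2, absorption from $bb^{\core,m}b=b$ plus hypothesis 1, and $x(ab)x=x$ from hypothesis 3) appear in the paper. One caution there: the three hypotheses are \emph{not} symmetric under $a\leftrightarrow b$ — swapping turns the third one into $bb^{\core,m}a^{\core,m}=a^{\core,m}b^{\core,m}b$, which is not assumed — so "the symmetric argument" is not literally available. The verification of $y(ba)y=y$ for $y=a^{\core,m}b^{\core,m}$ still goes through, but it must use the \emph{original} third hypothesis (applied once in each direction), not its swap.

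The genuine gap is in (ii)~$\Rightarrow$~(i). For the third identity your Cline-formula bridge does work (left-multiply $b(b^{\core,m}a^{\core,m})^2a=a^{\core,m}b^{\core,m}$ by $a$ and use $ab((ab)^{\#})^2=(ab)^{\#}$ to get $b^{\core,m}a^{\core,m}a=aa^{\core,m}b^{\core,m}$; the paper does this in one line). But for the first two identities you only offer the strategy "pre- and post-multiply by $a,b,a^{\#},b^{\#},a^{\core,m},b^{\core,m}$ and choose the right multiplier," and you explicitly flag that you do not know which multipliers make the idempotents collapse in the right direction. That choice is the actual content of this direction, and it is not a matter of cancellation at all: the trick is to expand the group inverse \emph{inside itself} using $b^{\core,m}=b^{\core,m}bb^{\core,m}$ and $a^{\core,m}=a^{\core,m}aa^{\core,m}$. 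Concretely,
\begin{equation*}
ab=(ab)^{\#}(ab)^2=\bigl(b^{\core,m}b\bigr)\,b^{\core,m}a^{\core,m}(ab)^2=b^{\core,m}bab,
\qquad
ab=(ab)^2(ab)^{\#}=(ab)^2b^{\core,m}a^{\core,m}\bigl(aa^{\core,m}\bigr)=abaa^{\core,m},
\end{equation*}
and symmetrically $ba=a^{\core,m}aba=babb^{\core,m}$ from $(ba)^{\#}=a^{\core,m}b^{\core,m}$. These four absorption identities immediately yield
\begin{equation*}
a^{\core,m}ab=a^{\core,m}(abaa^{\core,m})=(a^{\core,m}aba)a^{\core,m}=baa^{\core,m},
\qquad
b^{\core,m}ba=b^{\core,m}(babb^{\core,m})=(b^{\core,m}bab)b^{\core,m}=abb^{\core,m}.
\end{equation*}
Without this step your proposal for (ii)~$\Rightarrow$~(i) is a statement of intent rather than a proof.
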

\begin{proof}
(i) $\Rightarrow$ (ii)\hspace{0.3cm} Using the given hypothesis, we obtain
\begin{align*}
abb^{\core,m}(a^{\core,m}ab)=abb^{\core,m}baa^{\core,m}=a\left(baa^{\core,m}\right)=aa^{\core,m}ab=ab,
\end{align*}
\begin{align*}
\left(b^{\core,m}a^{\core,m}a\right)bb^{\core,m}a^{\core,m}&=aa^{\core,m}b^{\core,m}bb^{\core,m}a^{\core,m}
=\left(aa^{\core,m}b^{\core,m}\right)a^{\core,m}\\
&=b^{\core,m}a^{\core,m}aa^{\core,m}=b^{\core,m}a^{\core,m},
\end{align*}
and
$$
b^{\core,m}\left(a^{\core,m}ab\right)=\left(b^{\core,m}ba\right)a^{\core,m}=abb^{\core,m}a^{\core,m}.
$$
It follows that  $b^{\core,m}a^{\core,m}\in ab\{1,2,5\}.$ Hence, $(ab)^{\#}=b^{\core,m}a^{\core,m}.$ Similarly, we can prove that
$(ba)^{\#}=a^{\core,m}b^{\core,m}.$

\noindent (ii) $\Rightarrow$ (i)\hspace{0.3cm} From the given hypothesis $(ab)^{\#}=b^{\core,m}a^{\core,m},$ we obtain
\begin{align*}
ab&=b^{\core,m}a^{\core,m}(ab)^2=b^{\core,m}b  \left(b^{\core,m}a^{\core,m}(ab)^2\right)=b^{\core,m}bab\quad \mbox{ and}\\
ab&=(ab)^2b^{\core,m}a^{\core,m}=\left((ab)^2b^{\core,m}a^{\core,m} \right)aa^{\core,m}=abaa^{\core,m}.
\end{align*}
Similarly, using $(ba)^{\#}=a^{\core,m}b^{\core,m},$ we have $ba=a^{\core,m}aba=babb^{\core,m}.$
Therefore,
$$
a^{\core,m}ab=\left(a^{\core,m}aba \right)a^{\core,m}=baa^{\core,m} \mbox{ and } b^{\core,m}ba=\left(b^{\core,m}bab \right) b^{\core,m}=abb^{\core,m}.
$$
Since $ab$, $ba\in \mathcal{R}^{\#}$, using Lemma \ref{lm:Darzin-inv}, we have
$a(ba)^{\#}=ab\left((ab)^{\#} \right)^2=a=(ab)^{\#}a$. Which implies that $aa^{\core,m} b^{\core,m}= b^{\core,m} a^{\core,m}a.$
\end{proof}

The last result of reverse order law uses unitary condition of $m$-weighted core inverse.

\begin{theorem}\label{unit}
Let $m\in\mathcal{R}^{-1}$ be a hermitian element and $ a,b,ab \in \mathcal{R}^{\core, m}$.
\begin{enumerate}[(i)]
    \item If the element $ {b}$ is unitary  and
    ${b}^*  {a}^{\core, m}\mathcal{R} \subseteq  {a}^{\core, m} \mathcal{R}$, then
    \begin{equation*}
        ( {a}  {b})^{\core, m}= {b}^*  {a}^{\core, m}.
    \end{equation*}
    \item  If the element $ {a}$ is unitary and $a\mathcal{R}\subseteq b \mathcal{R}$, then
    \begin{equation*}
    ( {a}  {b})^{\core, m}= {b}^{\core, m}  {a}^{*}.
    \end{equation*}
\end{enumerate}
\end{theorem}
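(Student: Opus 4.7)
The plan for both parts is the same: apply Lemma \ref{lm:m-weighted-core-alt} with the candidate $x$, which reduces verifying $x=(ab)^{\core,m}$ to checking the three identities $(ab)x(ab)=ab$, $(m(ab)x)^{*}=m(ab)x$, and $(ab)x^{2}=x$.

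For part (i), I would set $x=b^{*}a^{\core,m}$. Unitarity of $b$ gives $bb^{*}=1$, so $(ab)x(ab)$ collapses through $bb^{*}=1$ and $aa^{\core,m}a=a$ to $ab$, and $(ab)x$ collapses to $aa^{\core,m}$, whose left-multiplication by $m$ is Hermitian by the defining property of $a^{\core,m}$. The third axiom is where the range hypothesis enters: writing $b^{*}a^{\core,m}=a^{\core,m}s$ for some $s\in\mathcal{R}$, the computation $(ab)x^{2}=aa^{\core,m}\cdot a^{\core,m}s$ simplifies via $a(a^{\core,m})^{2}=a^{\core,m}$ back to $a^{\core,m}s=b^{*}a^{\core,m}=x$.

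For part (ii), I would set $x=b^{\core,m}a^{*}$. The key observation is that the two hypotheses together force $bb^{\core,m}=1$: the inclusion $a\mathcal{R}\subseteq b\mathcal{R}$ gives $a=bt$ for some $t\in\mathcal{R}$, whence $bb^{\core,m}a=(bb^{\core,m}b)t=bt=a$; rearranging yields $(bb^{\core,m}-1)a=0$, and right-multiplication by $a^{*}$ combined with $aa^{*}=1$ (unitarity of $a$) produces $bb^{\core,m}=1$. Once this collapse is in hand the verification becomes nearly automatic: $(ab)x=a(bb^{\core,m})a^{*}=aa^{*}=1$, so $m(ab)x=m$ is Hermitian, while $(ab)x(ab)=ab$ and $(ab)x^{2}=x$ are immediate consequences of $(ab)x=1$, and Lemma \ref{lm:m-weighted-core-alt} delivers the conclusion.

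I expect the main obstacle to be spotting the reduction $bb^{\core,m}=1$ in part (ii). A frontal attack on the Hermiticity of $m(ab)x=mabb^{\core,m}a^{*}$ forces one to juggle $(bb^{\core,m})^{*}=mbb^{\core,m}m^{-1}$ inside a product where $m$ and $a$ do not interact nicely, and the computation appears to require an unwanted commutativity between $m$ and $a$. The shortcut is to notice that the hypotheses are strong enough to trivialise $bb^{\core,m}$, after which each of the three axioms follows in a single line. Part (i) has no analogous trap once one realises that the range inclusion $b^{*}a^{\core,m}\mathcal{R}\subseteq a^{\core,m}\mathcal{R}$ is precisely the ingredient needed to cancel the $aa^{\core,m}$ produced by $(ab)x$ against the $b^{*}a^{\core,m}$ appearing on its right in the computation of $(ab)x^{2}$.
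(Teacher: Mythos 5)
Your proof is correct. Part (i) is essentially the paper's own argument: both set $x=b^{*}a^{\core,m}$, use $bb^{*}=1$ to collapse $(ab)x$ to $aa^{\core,m}$, invoke the range hypothesis in the form $b^{*}a^{\core,m}=a^{\core,m}u$ exactly where you do (to get $(ab)x^{2}=x$), and conclude via Lemma \ref{lm:m-weighted-core-alt}. Part (ii) is where you genuinely diverge. The paper never isolates the identity $bb^{\core,m}=1$; it writes $a=bu$ and verifies the three conditions of Lemma \ref{prop11} (namely $x(ab)x=x$, $(m(ab)x)^{*}=m(ab)x$, $x(ab)^{2}=ab$) head-on, and its Hermiticity check is a long chain of insertions of $aa^{*}$ whose intermediate steps are shaky as written — in passing from $(aa^{*}mabb^{\core,m}a^{*})^{*}$ to $abb^{\core,m}a^{*}maa^{*}$ it effectively treats $(bb^{\core,m})^{*}$ as $bb^{\core,m}$, whereas in general $(bb^{\core,m})^{*}=mbb^{\core,m}m^{-1}$; this is precisely the trap you identify. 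Your observation that $a\mathcal{R}\subseteq b\mathcal{R}$ plus $aa^{*}=1$ forces $bb^{\core,m}=1$ (equivalently, $b$ is right invertible, hence — being group invertible — invertible) short-circuits all of this: $(ab)x=aa^{*}=1$, so each axiom of Lemma \ref{lm:m-weighted-core-alt} follows in one line. Your route for (ii) is both cleaner and more rigorous than the printed one, and the two arguments agree in the end because under the stated hypotheses $bb^{\core,m}$ really is the identity.
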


\begin{proof}
\begin{enumerate}[(i)]
\item Let ${b}^*  {a}^{\core, m}\mathcal{R}\subseteq  {a}^{\core, m}\mathcal{R}$. Then $ {b}^*  {a}^{\core, m}= {a}^{\core, m}  {u}$ for some $ {u}\in \mathcal{R}$. Now
\begin{center}
   $ {a}  {b}  {b}^*  {a}^{\core, m}  {a}  {b}= {a}  {a}^{\core, m}  {a}  {b}= {a}  {b}$,  \\
   $( m{a}  {b}  {b}^*  {a}^{\core, m})^*=( m{a}  {a}^{\core, m})^*= m{a}  {a}^{\core, m}= m{a}  {b}  {b}^*  {a}^{\core,m}$, and
\end{center}
\begin{eqnarray*}
 {a}  {b} ( {b}^*  {a}^{\core, m})^2= {a}  {a}^{\core, m}  {b}^*  {a}^{\core, m}= {a}  {a}^{\core, m}  {a}^{\core, m}  {u}= {a}^{\core, m}  {u}= {b}^*  {a}^{\core, m}.
\end{eqnarray*}
Thus by Lemma \ref{lm:m-weighted-core-alt}, $( {a}  {b})^{\core, m}= {b}^*  {a}^{\core, m}$.

\item Let $ a \mathcal{R}\subseteq b \mathcal{R}$. Then $ {a}= {b}  {u}$ for some $ {u}\in \mathcal{R}$ and
\begin{center}
    $ {b}^{\core, m}  {a}^{*} (  {a}  {b})^2= {b}^{\core, m}  {b}  {a}  {b}= {b}^{\core, m}  {b}^2u {b}={b}  {u}  {b}= {a}  {b}$.
\end{center}
Applying Lemma \ref{pro2.2}, we have
\begin{center}
  $ {b}^{\core, m}  {a}^{*}  {a}  {b}  {b}^{\core, m}  {a}^{*}= {b}^{\core, m}   {b}  {b}^{\core, m}  {a}^{*}= {b}^{\core, m}  {a}^{*}$.
\end{center}
Further,
\begin{eqnarray*}
(mab{b}^{\core, m}a^*)^*&=& (aa^*mabb^{\core, m}a^{*})^*=abb^{\core, m}a^*maa*\\
&=&abb^{\core, m}aa^*a^*maa*=abb^{\core, m}bua^*a^*maa*\\
&=&a^2(a^*)^2maa^*=maa^*=maaa^*a^*\\
&=&mabu(a^*)^2=mabb^{\core, m}bu(a^*)^2\\
&=&mabb^{\core, m}a^*.
\end{eqnarray*}
Hence by Lemma \ref{prop11}, $(ab)^{\core, m}= b^{\core}a^{*}$.
\qedhere
\end{enumerate}
\end{proof}

\section{Conclusion}
We have discussed a few additive properties of the weighted core and dual core inverse in a ring. In addition to that some reverse order law and mixed-type reverse order law for these inverses are investigated. This paper has not been addressed in the following problems and the possibilities for further research. 
\begin{enumerate}
	\item[$\bullet$] It will be interesting to investigate some kind of  additive properties for weighted core-EP inverse.
	
	\item[$\bullet$] To find an explicit form of the additive properties in terms of other generalized inverses.
	
	\item[$\bullet$] The reverse order law can be extended to the multiple element products in a ring.
\end{enumerate}

\medskip

\noindent{\bf{Acknowledgments}}\\
The third author is grateful to the Mohapatra Family Foundation and the College of Graduate Studies, University of Central Florida, Orlando, for their financial support for this research.

\bibliographystyle{abbrv}

\bibliography{reference}

\begin{thebibliography}{10}

\bibitem{baks}
O.~M. Baksalary and G.~Trenkler.
\newblock Core inverse of matrices.
\newblock {\em Linear Multilinear Algebra}, 58(5-6):681--697, 2010.

\bibitem{BakTr14}
O.~M. Baksalary and G.~Trenkler.
\newblock On a generalized core inverse.
\newblock {\em Appl. Math. Comput.}, 236:450--457, 2014.

\bibitem{BenIsrael03}
A.~Ben-Israel and T.~N.~E. Greville.
\newblock {\em Generalized inverses: theory and applications}.
\newblock Wiley-Interscience [John Wiley \& Sons], New York-London-Sydney,
  1974.
\newblock Pure and Applied Mathematics.

\bibitem{Cao2004}
C.~Cao, X.~Zhang, and X.~Tang.
\newblock Reverse order law of group inverses of products of two matrices.
\newblock {\em Appl. Math. Comput.}, 158(2):489--495, 2004.

\bibitem{Chen2017}
J.~Chen, H.~Zhu, P.~Patricio, and Y.~Zhang.
\newblock Characterizations and representations of core and dual core inverses.
\newblock {\em Canad. Math. Bull.}, 60(2):269--282, 2017.

\bibitem{CZW}
J.~L. Chen, G.~F. Zhuang, and Y.~M. Wei.
\newblock The {D}razin inverse of a sum of morphisms.
\newblock {\em Acta Math. Sci. Ser. A (Chin. Ed.)}, 29(3):538--552, 2009.

\bibitem{Cline-1965}
R.~E. Cline.
\newblock An application of representation for the generalized inverse of a
  matrix.
\newblock {\em MRC Technical Report}, 592, 1965.

\bibitem{Cline68}
R.~E. Cline.
\newblock Inverses of rank invariant powers of a matrix.
\newblock {\em SIAM J. Numer. Anal.}, 5:182--197, 1968.

\bibitem{Cvetkovi_Ili__2006}
D.~S. Cvetkovi{\'{c}}-Ili{\'{c}}, D.~S. Djordjevi{\'{c}}, and Y.~Wei.
\newblock Additive results for the generalized drazin inverse in a banach
  algebra.
\newblock {\em Linear Algebra and its Applications}, 418(1):53--61, oct 2006.

\bibitem{Deng11}
C.~Y. Deng.
\newblock Reverse order law for the group inverses.
\newblock {\em J. Math. Anal. Appl.}, 382(2):663--671, 2011.

\bibitem{MR2805535}
N.~v. Din\v{c}i\'{c}, D.~S. Djordjevi\'{c}, and D.~Mosi\'{c}.
\newblock Mixed-type reverse order law and its equivalents.
\newblock {\em Studia Math.}, 204(2):123--136, 2011.

\bibitem{Djordjevi__2002}
D.~S. Djordjevi{\'{c}} and Y.~Wei.
\newblock Additive results for the generalized drazin inverse.
\newblock {\em Journal of the Australian Mathematical Society}, 73(1):115--126,
  aug 2002.

\bibitem{Djordjevic2005}
D.~S. Djordjevi\'{c} and Y.~Wei.
\newblock Outer generalized inverses in rings.
\newblock {\em Comm. Algebra}, 33(9):3051--3060, 2005.

\bibitem{Drazin58}
M.~P. Drazin.
\newblock Pseudo-inverses in associative rings and semigroups.
\newblock {\em Amer. Math. Monthly}, 65:506--514, 1958.

\bibitem{Gonz_lez_2004}
N.~C. Gonz{\'{a}}lez and J.~J. Koliha.
\newblock New additive results for the g-drazin inverse.
\newblock {\em Proceedings of the Royal Society of Edinburgh: Section A
  Mathematics}, 134(6):1085--1097, dec 2004.

\bibitem{Grev1966}
T.~N.~E. Greville.
\newblock Note on the generalized inverse of a matrix product.
\newblock {\em SIAM Rev.}, 8:518--521; erratum, ibid. 9 (1966), 249, 1966.

\bibitem{Hartwig1977}
R.~E. Hartwig and J.~Luh.
\newblock A note on the group structure of unit regular ring elements.
\newblock {\em Pacific J. Math.}, 71(2):449--461, 1977.

\bibitem{Koliha2007}
J.~J. Koliha, D.~Djordjevi\'{c}, and D.~Cvetkovi\'{c}.
\newblock Moore-{P}enrose inverse in rings with involution.
\newblock {\em Linear Algebra Appl.}, 426(2-3):371--381, 2007.

\bibitem{Li2018}
T.~Li and J.~Chen.
\newblock Characterizations of core and dual core inverses in rings with
  involution.
\newblock {\em Linear Multilinear Algebra}, 66(4):717--730, 2018.

\bibitem{Xia2014}
X.~Liu, M.~Zhang, and J.~Ben\'{\i}tez.
\newblock Further results on the reverse order law for the group inverse in
  rings.
\newblock {\em Appl. Math. Comput.}, 229:316--326, 2014.

\bibitem{Mosic-CA-2018}
D.~Mosi\'{c}, C.~Deng, and H.~Ma.
\newblock On a weighted core inverse in a ring with involution.
\newblock {\em Comm. Algebra}, 46(6):2332--2345, 2018.

\bibitem{MR2831656}
D.~Mosi\'{c} and D.~S. Djordjevi\'{c}.
\newblock Further results on the reverse order law for the {M}oore-{P}enrose
  inverse in rings with involution.
\newblock {\em Appl. Math. Comput.}, 218(4):1478--1483, 2011.

\bibitem{MosiDij12}
D.~Mosi\'{c} and D.~S. Djordjevi\'{c}.
\newblock Reverse order law for the group inverse in rings.
\newblock {\em Appl. Math. Comput.}, 219(5):2526--2534, 2012.

\bibitem{PaniBeheMi20}
K.~Panigrahy, R.~Behera, and D.~Mishra.
\newblock Reverse-order law for the {M}oore--{P}enrose inverses of tensors.
\newblock {\em Linear Multilinear Algebra}, 68(2):246--264, 2020.

\bibitem{rakicAMC}
D.~S. Raki\'{c}, N.~{\v{C}}. Din\v{c}i\'{c}, and D.~S. Djordjevi\'{c}.
\newblock Core inverse and core partial order of {H}ilbert space operators.
\newblock {\em Appl. Math. Comput.}, 244:283--302, 2014.

\bibitem{Rakic2014}
D.~S. Raki\'{c}, N.~{\v{C}}. Din\v{c}i\'{c}, and D.~S. Djordjevi\'{c}.
\newblock Group, {M}oore-{P}enrose, core and dual core inverse in rings with
  involution.
\newblock {\em Linear Algebra Appl.}, 463:115--133, dec 2014.

\bibitem{JR_Rev20}
J.~K. Sahoo and R.~Behera.
\newblock Reverse-order law for core inverse of tensors.
\newblock {\em Comput. Appl. Math.}, 97(37):Paper No. 9, 2020.

\bibitem{SahBe20}
J.~K. Sahoo, R.~Behera, P.~S. Stanimirovi\'{c}, V.~N. Katsikis, and H.~Ma.
\newblock Core and core-{EP} inverses of tensors.
\newblock {\em Comput. Appl. Math.}, 39(1):Paper No. 9, 2020.

\bibitem{YiminWei98}
W.~Sun and Y.~Wei.
\newblock Inverse order rule for weighted generalized inverse.
\newblock {\em SIAM J. Matrix Anal. Appl.}, 19(3):772--775, 1998.

\bibitem{SunWei02}
W.~Sun and Y.~Wei.
\newblock Triple reverse-order law for weighted generalized inverses.
\newblock {\em Appl. Math. Comput.}, 125(2-3):221--229, 2002.

\bibitem{zhou2019core}
M.~Zhou, J.~Chen, and D.~Wang.
\newblock The core inverses of linear combinations of two core invertible
  matrices.
\newblock {\em Linear and Multilinear Algebra}, pages 1--17, 2019.

\bibitem{Zhou-CA-2020}
M.~Zhou, J.~Chen, and X.~Zhu.
\newblock The group inverse and core inverse of sums of two elements in a ring.
\newblock {\em Comm. Algebra}, 48(2):676--690, 2020.

\bibitem{Zhu2019}
H.~Zhu and Q.-W. Wang.
\newblock Weighted pseudo core inverses in rings.
\newblock {\em Linear and Multilinear Algebra}, pages 1--14, mar 2019.

\bibitem{Zou-MedJM-2018}
H.~Zou, J.~Chen, and P.~Patr\'{\i}cio.
\newblock Reverse order law for the core inverse in rings.
\newblock {\em Mediterr. J. Math.}, 15(3):Paper No. 145, 17, 2018.

\end{thebibliography}
\end{document}